\newtheorem{theorem}{Theorem}[part]
\newtheorem{lemma}[theorem]{Lemma}
\newtheorem{proposition}[theorem]{Proposition}
\newtheorem{corollary}[theorem]{Corollary}
\theoremstyle{definition}
\newtheorem{definition}[theorem]{Definition}
\theoremstyle{remark}
\newtheorem{remark}[theorem]{Remark}
\newtheorem{example}[theorem]{Example}
\numberwithin{equation}{part}
\newcommand{\ux}{\underline{x}}
\newcommand{\ov}{\overline}
\newcommand{\Pos}{\rm{Pos}}
\newcommand{\gj}{\mathfrak{j}}
\newcommand{\gm}{\mathfrak{m}}
\newcommand{\gn}{\mathfrak{n}}
\newcommand{\C}{\mathbb{C}}
\newcommand{\R}{\mathbb{R}}
\newcommand{\N}{\mathbb{N}}
\newcommand{\K}{\mathbb{K}}
\newcommand{\cB}{\mathcal{B}}
\newcommand{\cI}{\mathcal{I}}
\newcommand{\cK}{\mathcal{K}}
\newcommand{\cG}{\mathcal{G}}
\newcommand{\cH}{\mathcal{H}}
\newcommand{\cN}{\mathcal{N}}
\newcommand{\cM}{\mathcal{M}}
\newcommand{\cD}{\mathcal{D}}
\newcommand{\cZ}{\mathcal{Z}}
\newcommand{\sA}{\sf{A}}
\newcommand{\ii}{\sf{i}}
\newcommand{\supp}{\rm{supp}}
\newcommand{\sat}{\rm{sat}}
\begin{document}
    \title[Chapter 12: The moment problem on compact \ldots{} (revised version)]{Chapter 12: The moment problem on compact semi-algebraic sets (revised version)}

\author{Konrad Schm\"udgen}
\address{University of Leipzig, Mathematical Institute, Augustusplatz 10/11, D-04109 Leipzig, Germany}
\email{schmuedgen@math.uni-leipzig.de}

\subjclass[2020]{46A60 (Primary); 14P10 (Secondary)}
\keywords{Moment problem,  Positivstellensatz, real algebraic geometry}

\thanks{Acknowledgment: The author would like to thank  Matthias Sch\"otz for the fruitful cooperation.}

\begin{abstract}
    The following is an improved version of Chapter 12 of my book \cite{Sm17}. Among others, we present a new  unified approach to the Archimedean Positivstellens\"atze for quadratic modules and semirings in Section 
    12.4 and we add a number of results on Positivstellens\"atze for semirings and the corresponding moment problems.All references to formulas and to the bibliography of the book are retained.
    
    This version is  essentially  based on results from the recent paper \cite{SmS23}.
    We will also use a result from the  book \cite{Sm20}.
\end{abstract}

\maketitle
\setcounter{part}{12}
\renewcommand{\thesection}{\arabic{part}.\arabic{section}}

In this chapter we begin the study of the multidimensional moment problem. The passage to dimensions $d\geq 2$ brings new difficulties and  unexpected  phenomena. 
In Section 3.2 we  derived
solvability criteria of the moment problem on intervals in terms of positivity conditions. It seems to be natural to look for similar characterizations in higher dimensions as well. This leads us immediately into the  realm of real algebraic geometry and to  descriptions of positive polynomials on semi-algebraic sets. In this chapter we treat this approach   for basic closed {\it compact} semi-algebraic subsets of $\R^d$. It turns out that for such sets there is a  close interaction between  the moment problem  and  real algebraic geometry. Generally speaking, combined with Haviland's theorem any denominator-free Positivstellensatz yields an existence result for the moment problem. We develop this connection in detail and give complete proofs of the corresponding  Positivstellens\"atze.

Basic notions and facts from real algebraic geometry that  are needed   for our treatment of the moment problem are collected in Section \ref{basicssemialgebraicsets}. Section \ref{localizing} contains general facts on localizing functionals and  supports of representing measures.

In Section \ref{momentproblemstrictpos}, we prove our main existence result for the moment problem on compact semi-algebraic sets (Theorem \ref{mpschm}) and the corresponding  Positivstellensatz for preorderings (Theorem  \ref{schmps}).

In Section \ref{reparchimodiules} we derive 
a fundamental result, the  Archimedean Positivstellensatz for quadratic modules and semirings (Theorem \ref{archpos}). In Section \ref{archimedeanpolynomials}, we restate this theorem for the polynomial algebra $\R[x_1,\dotsc,x_d]$ and give applications to the moment problem (Theorems \ref{archmedpsq}, \ref{archmedps}, and \ref{auxsemiring}). Section \ref{polyhedron} contains a Positivstellensatz  and its application to the moment problem (Theorem \ref{prestel}) for semi-algebraic sets which are contained in compact polyhedra. In Section \ref {examplesmp},  we derive a number of classical results and examples on the moment problem for concrete compact sets. 
The results in Sections \ref{momentproblemstrictpos}, \ref{reparchimodiules}, \ref{archimedeanpolynomials}, \ref{polyhedron}, and \ref {examplesmp}  are formulated 
in the language of real algebra, that is, in terms of preorderings, quadratic modules, or  semirings.

Apart from real algebraic geometry  the theory of self-adjoint Hilbert space operators is our  main tool for the multidimensional moment problem.  
In Section \ref{Operator-theoreticappraochtothemomentprblem} we develop this method by studying the GNS construction and the multidimensional spectral theorem. This approach yields a short and elegant approach to the Positivstellensatz and to the moment problem for Archimedean quadratic modules.
\smallskip

Throughout  this chapter, ${\sA}$  denotes a {\bf commutative real algebra with unit element}  denoted by $1$. For notational simplicity we  write $\lambda$ for $\lambda \cdot 1$, where $\lambda \in \R$. Recall that\, $\sum {\sA}^2$\, is  the set of  finite sums $\sum_i a_i^2$ of squares of elements $a_i\in {\sA}$.

\section{Semi-algebraic sets and Positivstellens\"atze}\label{basicssemialgebraicsets}

The following definition contains three basic notions which are needed in the sequel.

\begin{definition}
    A \emph{ quadratic module}\index{Quadratic module} of\, ${\sA}$ is a subset $Q$ of ${\sA}$ such that 
    \begin{align}\label{axiomquadmodule}
        Q + Q \subseteq Q,~~ 1 \in Q,~~
        a^2 Q  \in Q~~{\rm for~all}~~a\in {\sA}.
    \end{align} 
    A quadratic module $T$ is called a {\em preordering}\index{Preordering} if\, $T\cdot T\subseteq T$. \\  A {\em semiring}\index{Semiring}\index{Preprime} is a subset $S$ of ${\sA}$ satisfying
    \begin{align}
        S + S \subseteq S,~~
        S\cdot S  \subseteq S,~~ \lambda \in S ~~{\rm for~all}~~\lambda\in \R,\lambda \geq 0.
    \end{align}
\end{definition}
In the literature ``semirings" are also called ``preprimes".
The name ``quadratic module" stems from the last condition in (\ref{axiomquadmodule}) which means  that $Q$ is invariant under multiplication by squares. Setting $a=\sqrt{\lambda}$, this implies that $\lambda\cdot Q\subseteq Q$ for $\lambda \geq 0$.  While  semirings and preorderings  are closed under multiplication,  quadratic modules are  not necessarily. Semirings  do not contain all squares in general. Clearly, a quadratic module is a preordering if and only if it is a semiring. In this book, we  work mainly with quadratic modules and preorderings. 

\begin{example}
    The subset\, $S=\{ \sum_{j=0}^n a_jx^j:\, a_j\geq 0, n\in \N\}$\, of\, $\R[x]$\, is a semiring, but not a quadratic module. Clearly,
    $Q=\sum \R_d[\ux]^2+x_1\sum \R_d[\ux]^2+x_2\sum \R_d[\ux]^2$  is a quadratic module of $\R_d[\ux], d\geq 2$, but $Q$ is  neither a semiring nor a preordering. $\hfill \circ$ \end{example}

Obviously, $\sum {\sA}^2$ is the smallest quadratic module of ${\sA}$. Since ${\sA}$ is commutative,   $\sum {\sA}^2$ is invariant under multiplication, so  it is also the smallest  preordering of ${\sA}$. 

Our guiding example for\,  ${\sA}$\, is the polynomial algebra\, $\R_d[\ux]:=\R[x_1,\dotsc,x_d]$. 

Let ${\sf f}=\{f_1,\dotsc,f_k\}$ be a finite subset of $\R_d[\ux]$. The set\index[sym]{KCfB@$\cK({\sf f}), \cK(f_1,\dotsc,f_k)$}
\begin{align}\label{basicclosed}
    \cK({\sf f})\equiv \cK(f_1,\dotsc,f_k)=\{x\in \R^d: f_1(x)\geq 0,\dotsc,f_k(x)\geq 0\}
\end{align}
is called the {\em basic closed semi-algebraic set associated with $\sf f$}.\index{Basic closed semi-algebraic set}\index{Semi-algebraic set} It is easily seen that\index[sym]{QAfB@$Q({\sf f}), Q(f_1,\dotsc,f_k)$}
\begin{align}\label{quadraticqf}
    Q({\sf f})\equiv Q(f_1,\dotsc,f_k)= \big\{\, \sigma_0+ f_1 \sigma_1+\dots+f_k \sigma_k : \,\sigma_0,\dotsc,\sigma_k\in \sum\R_d[\ux]^2\big\}
\end{align}
is the {\em quadratic module generated by the set} $\sf f$,
\begin{align}\label{semiringf}
    S({\sf f}) \equiv S(f_1,\dotsc,f_k)=
    \bigg\{ 
    \sum_{n_1,\dotsc,n_k=0}^r \alpha_{n_1,\dotsc,n_k} f_1^{n_1}\cdots f_r^{n_r}: 
    \alpha_{n_1,\dotsc,n_r}\geq 0, t\in \N_0
    \bigg\}
\end{align}
is the {\it semiring generated by}\, ${\sf f}$, and\index[sym]{TAfB@$T({\sf f}), T(f_1,\dotsc,f_k)$} 
\begin{align}\label{preorderingtf}
    T({\sf f})\equiv T(f_1,\dotsc,f_k)=\bigg\{  \sum_{e=(e_1,\dotsc,e_k)\in \{0,1\}^k} f_1^{e_1}\cdots f_k^{e_k} \sigma_e:\, \sigma_e\in \sum\R_d[\ux]^2 \, \bigg\}
\end{align}
is the {\em preordering generated by the set $\sf f$}.

These sets $\cK({\sf f})$, $Q({\sf f})$, $S({\sf f})$, $T({\sf f})$  play a crucial role in this chapter and the next.
\begin{definition}
    A {\em cone}\index{Cone} is a subset $C$ of ${\sA}$ such that
    \begin{align*}
        C+C\subseteq C~~~ {\rm and}~~~  \lambda \cdot C\subseteq C~~~ {\rm for}~~ \lambda\geq 0.\
    \end{align*}
    A {\em unital cone} of $\sA$ is a cone $C$ which contain the unit element of\, $\sA$.\\
    An {\em $S$-module} for a semiring $S$ is a unital cone such that
    \begin{align}\label{smodules}
        ac\in C~~~ {\rm for} ~~~  a\in S ~~ {\rm and}~~c\in C.
    \end{align}
\end{definition}
Obviously, semirings, quadratic modules, and preorderings are unital cones. 

Setting $c=1$ in (\ref{smodules}) yields $a\in C$ for  $a\in S$. Thus,  $S\subseteq C$ for any $S$-module $C$.\smallskip

Each cone $C$ of ${\sA}$ yields an ordering $\preceq$\index[sym]{smaller@$\preceq$} on  ${\sA}$\,  by defining 
\begin{align*}a \preceq b \quad {\rm 
        if~ and~ only~ if}\quad b-a \in C.
\end{align*}
\begin{example}\label{examplemodule}
    Let $S$ be a semiring of $\sA$ and $g_0:=1, g_1,\dotsc,g_r\in {\sA}$, where $r\in \N$. Then
    \begin{align*}
        C:=g_0 S+ g_1 S+\cdots+g_rS
    \end{align*}
    is the\, {\it $S$-module of\, $\sA$ generated by $g_1,\dotsc,g_r$}.
\end{example}

By the above definitions, all polynomials from $T({\sf f})$ are nonnegative on $\cK({\sf f})$, but in general $T({\sf f})$ does not exhaust the nonnegative polynomials on $\cK({\sf f})$. 

The following {\it Positivstellensatz of Krivine--Stengle}\index{Positivstellensatz!  Krivine--Stengle}
is  a fundamental result of real algebraic geometry. It 
describes nonnegative resp. positive polynomials on $\cK({\sf f})$ in terms of {\it quotients} of elements of the preordering $T({\sf f})$. 
\begin{theorem}\label{krivinestengle}\index{Theorem! Krivine--Stengle}   Let $\cK({\sf f})$ and $T({\sf f})$ be as above and let $g\in \R_d[\ux]$. Then we have:
    \begin{itemize}
        \item[\em (i)]~ {\rm (Positivstellensatz)}~ $g(x)>0$  for all\, $x\in \cK({\sf f})$\, if and only if there exist polynomials $p,q\in T({\sf f})$  such that $pg=1+q$.
        \item[\em (ii)]~ {\rm (Nichtnegativstellensatz)} $g(x)\geq 0$ for all $x\in \cK({\sf f})$ if and only if there exist $p,q\in T({\sf f})$ and $m\in \N$ such that $pg=g^{2m}+q$.
        \item[\em (iii)]~ {\rm (Nullstellensatz)} $g(x)= 0$ for  $x\in \cK({\sf f})$ if and only if $-g^{2n}\in T({\sf f})$ for some $n\in \N$.
        \item[\em (iv)]~ $\cK({\sf f})$ is empty if and only if~ $-1$ belongs to $T({\sf f}).$
    \end{itemize}
\end{theorem}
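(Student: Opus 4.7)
\emph{Plan.} The strategy is to prove (iv) first and then deduce (i), (iii), (ii) from it via Rabinowitsch-style reductions. The backward (``if'') implications in all four parts are routine via pointwise evaluation at $x \in \cK({\sf f})$ (the case of (ii) requires a small case split on the sign of $g(x)$), so I concentrate on the forward directions throughout.

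For (iv), suppose $-1 \notin T({\sf f})$; the task is to produce a real point of $\cK({\sf f})$. By Zorn's lemma, extend $T({\sf f})$ to a preordering $P$ of $\R_d[\ux]$ that is maximal subject to $-1 \notin P$. Standard Artin--Schreier arguments give $P \cup (-P) = \R_d[\ux]$ and show that $\gm := P \cap (-P)$ is a prime ideal, so $P$ descends to a total order $\leq$ on the fraction field $K$ of $\R_d[\ux]/\gm$. Let $R$ be the real closure of $(K, \leq)$ and let $\xi_j \in R$ denote the image of $x_j$. Then $f_j(\xi) \geq 0$ in $R$ for each $j$, so the existential sentence $\exists\, \ux\, \bigwedge_j f_j(\ux) \geq 0$ holds over $R$; by Tarski's transfer principle (model-completeness of the theory of real closed fields), it also holds over $\R$, contradicting $\cK({\sf f}) = \emptyset$. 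The passage from a formal point over $R$ to a genuine real point is the main obstacle of the entire theorem, as it draws on the full machinery of ordered fields, real closures, and transfer.

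The remaining parts are reductions. For (i), if $g > 0$ on $\cK({\sf f})$, then $\cK(f_1, \ldots, f_k, -g) = \emptyset$, and (iv) produces $-1 \in T(f_1, \ldots, f_k, -g)$; since $(-g)^2 = g^2 \in \sum \R_d[\ux]^2$, every element of the enlarged preordering reduces to the form $a + (-g) b$ with $a, b \in T({\sf f})$, giving $-1 = a - g b$, i.e., $g b = 1 + a$. For (iii), invoke the Rabinowitsch trick: introduce a fresh indeterminate $y$ and apply (iv) in $\R[\ux, y]$ to the extended family $f_1, \ldots, f_k, 1-yg, yg-1$. The associated semi-algebraic set in $\R^{d+1}$ is empty, since any point would satisfy $yg = 1$, forcing $g \neq 0$, contrary to $g \equiv 0$ on $\cK({\sf f})$. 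Hence $-1$ lies in the corresponding preordering; reducing modulo the ideal $(1 - yg)$ and passing to the localization $\R[\ux]_g$ (via $y \mapsto g^{-1}$), the identity takes the form $-1 = \sum_e f^e \sigma_e(\ux, g^{-1})$ with $\sigma_e \in \sum \R[\ux, y]^2$, and multiplying through by $g^{2n}$ for $n$ large enough to clear denominators yields $-g^{2n} \in T({\sf f})$. Finally, (ii) reduces to (iii): if $g \geq 0$ on $\cK({\sf f})$, then $g \equiv 0$ on $\cK(f_1, \ldots, f_k, -g)$, so (iii) delivers $-g^{2m} \in T(f_1, \ldots, f_k, -g)$; the same absorption trick as in (i) rewrites this as $-g^{2m} = a - g b$ with $a, b \in T({\sf f})$, whence $g b = g^{2m} + a$.
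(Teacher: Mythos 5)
The paper does not give a proof of this theorem: it simply cites the references [PD], [Ms1], [Kv1], [Ste1] and remarks that the standard proofs rest on the Tarski--Seidenberg transfer principle and that the four assertions are mutually equivalent. Your sketch reconstructs exactly that standard route and is correct in all its essentials: the Zorn argument to produce a maximal proper preordering $P \supseteq T({\sf f})$, the Artin--Schreier facts that $P \cup (-P) = \R_d[\ux]$ and that $P \cap (-P)$ is a prime ideal, the passage to an ordered fraction field, its real closure, and model-completeness of real closed fields to transfer the formal point back to $\R$. The Rabinowitsch-style reductions are also the usual ones: the observation that $T(f_1,\dotsc,f_k,-g) = T({\sf f}) + (-g)T({\sf f})$ (since $(-g)^2$ is already a square) gives (i) from (iv) and (ii) from (iii), and the auxiliary variable $y$ with $1 - yg$, $yg - 1$ together with clearing denominators after $y \mapsto g^{-1}$ gives (iii) from (iv). One minor point worth making explicit in a fully written-out version: after the substitution $y \mapsto g^{-1}$ kills the ideal term, the identity $-1 = \sum_e f^e \sigma_e(\ux, g^{-1})$ lives in the rational function field $\R(\ux)$, and the chosen exponent $2n$ must dominate twice the $y$-degree of every polynomial appearing in the sums of squares $\sigma_e$ so that $g^{2n}\sigma_e(\ux, g^{-1})$ becomes a genuine sum of squares of polynomials. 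With that caveat spelled out, the sketch is sound and matches what the paper describes as the standard argument.
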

\begin{proof} See [PD] or [Ms1]. The original papers are [Kv1]  and [Ste1].
      \end{proof}  

All  {\it ``if"} assertions are easily checked and it is not  difficult to show that all four statements are equivalent, see e.g. [Ms1]. Standard proofs  of Theorem \ref{krivinestengle} as given in [PD]  or [Ms1] are based on the Tarski--Seidenberg transfer  principle.
Assertion (i) of Theorem \ref{krivinestengle} will play an  essential role in the proof of Proposition \ref{prearchcom} below.
\smallskip

Now we  turn to algebraic sets.
For a  subset $S$ of $\R_d[\ux],$  the  real zero set of $S$ is\index[sym]{ZCSA@$\cZ(S)$} 
\begin{align}
    \cZ(S)=\{x\in \R^d: f(x)=0 \quad {\rm for~ all}~~f\in S\}.
\end{align}
A subset $V$ of $\R^d$ of the form $\cZ(S)$ is called a {\it real algebraic set}.\index{Real algebraic set} 

Hilbert's basis theorem [CLO, p. 75] implies
that each real algebraic set is of the form $\cZ(S)$ for some {\it finite} set $S=\{h_1,\dotsc,h_m\}$. In particular, each real algebraic set is a basic closed semi-algebraic set, because
$\cK(h_1,\dotsc,h_m,-h_1,\dotsc,-h_m)=\cZ(S)$.

Let $S$ be a subset of $\R_d[\ux]$ and  $V:=\cZ(S)$ the corresponding real algebraic set. We denote by $\cI$  the ideal of $\R_d[\ux]$ generated by $S$ and by $\hat{\cI}$  the  ideal of  $f\in \R_d[\ux]$ which vanish on $V$. Clearly,   $\cZ(S)=\cZ(\cI)$ and 
$\cI\subseteq \hat{\cI}$. In general,  $\cI\neq \hat{\cI}$. (For instance, if $d=2$ and $S=\{x_1^2+x_2^2\}$, then $V=\{0\}$ and  $x_1^2\in \hat{\cI}$, but $x_1^2\notin\cI$.) 

It can be shown [BDRo, Theorem 4.1.4] that\,  $\cI=\hat{\cI}$\, if and only if\, $\sum p_j^2\in \cI$\, for finitely many $p_j\in \R_d[\ux]$ implies that $p_j\in \cI$ for all $j$. An ideal that obeys this property is called {\it real}. In particular,  $\hat{\cI}$ is  real. The ideal $\cI$ generated  by a single irreducible polynomial $h\in \R_d[\ux]$ is real if and only if $h$ changes its sign on $\R^d$, that is, there are $x_0,x_1\in \R^d$ such that $h(x_0)h(x_1)<0$, see [BCRo, Theorem 4.5.1].

The quotient algebra \index[sym]{RDVA@$\R[V]$}
\begin{align}\label{algregvar}
    \R[V]:=\R_d[\ux]/\hat{\cI}
\end{align} is called the algebra of {\it regular functions}\index{Real algebraic set! algebra of regular functions} on  $V$. Since  $\hat{\cI}$ is real, it follows that
\begin{align}\label{qminusq0}
    \sum \R[V]^2\cap \big(-\sum \R[V]^2\big) =\{0\}.  
\end{align}

\begin{example}\label{quadraticideal}
    Let us assume that the set ${\sf f}$ is of the form \begin{align*}{\sf f}=\{g_1,\cdots,g_l,h_1,-h_1,\dotsc,h_m,-h_m\}.\end{align*}
    If ${\sf g}:=\{g_1,\dotsc,g_l\}$ and  $\cI$ denotes the ideal of $\R_d[\ux]$ generated by $h_1,\dotsc,h_m$, then
    \begin{align}\label{quqadraticzero}
        \cK({\sf f})=\cK({\sf g})\cap \cZ(\cI),~~Q({\sf f})=Q({\sf g})+\cI, ~~{\rm and}~~ T({\sf f})=T({\sf g})+\cI.
    \end{align}

    We prove (\ref{quqadraticzero}).
    The first equality of (\ref{quqadraticzero}) and the inclusions  $Q({\sf f})\subseteq Q({\sf g})+\cI$ and $ T({\sf f})\subseteq T({\sf g})+\cI$ are clear  from the corresponding definitions. The  identity
    \begin{align*}
        ph_j=\frac{1}{4}
        [(p+1)^2h_j+(p-1)^2(-h_j)]\in Q({\sf f}),~~p\in \R_d[\ux],
    \end{align*}
    implies that $\cI\subseteq Q({\sf f})\subseteq T({\sf f})$. Hence $Q({\sf g})+\cI\subseteq Q({\sf f})$ and $T({\sf g})+\cI\subseteq T({\sf f}).$  $\hfill \circ$
\end{example}

Another important concept is introduced in the following definition.
\begin{definition}
    Let $C$ be a unital cone in  ${\sA}$. Define
    \begin{align*}
        {\sA}_b(C):=\{ a\in {\sA}: {\rm there ~exists~ a}~~\lambda >0 ~~{\rm such ~that}~~ \lambda - a \in C~~{\rm and}~~\lambda +a\in C\}.
    \end{align*}
    We shall say that  $C$ is  {\em Archimedean}\index{Quadratic module! Archimedean}\index{Semiring! Archimedean} if\, ${\sA}_b(C)={\sA}$, or equivalently, for every $a\in {\sA}$ there exists a $\lambda>0$ such that $\lambda -a\in C.$
\end{definition}
\begin{lemma}\label{boundedele1} Let $Q$ be a quadratic module of\, ${\sA}$\, and let $a\in {\sA}$. Then
    $a\in {\sA}_b(Q)$  if and only if\,   $\lambda^2 -a^2\in Q$ for some $\lambda >0$.
\end{lemma}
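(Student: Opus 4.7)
The plan is to prove both implications by producing explicit algebraic identities that express the relevant elements as combinations of squares times elements already known to lie in $Q$, together with nonnegative scalars. A preliminary observation I would record first is that, by the axioms $1\in Q$ and $a^2 Q\subseteq Q$, the quadratic module $Q$ contains every square (take $1\in Q$ and multiply by $a^2$), and moreover $Q$ is closed under multiplication by nonnegative real scalars $\mu\geq 0$, since $\mu = (\sqrt{\mu})^2$ and $\mu\cdot 1\in Q$, and by $Q+Q\subseteq Q$ it absorbs sums.

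For the ``only if'' direction, suppose $a\in\sA_b(Q)$, so that there exists $\lambda>0$ with $\lambda-a\in Q$ and $\lambda+a\in Q$. The key identity I would use is
\begin{equation*}
2\lambda(\lambda^2-a^2)=(\lambda+a)^2(\lambda-a)+(\lambda-a)^2(\lambda+a),
\end{equation*}
which is verified instantly by factoring $(\lambda-a)(\lambda+a)$ out of the right-hand side. Now $(\lambda+a)^2$ and $(\lambda-a)^2$ are squares in $\sA$, so the axiom $a^2 Q\subseteq Q$ yields $(\lambda+a)^2(\lambda-a)\in Q$ and $(\lambda-a)^2(\lambda+a)\in Q$; their sum lies in $Q$, and dividing by the positive scalar $2\lambda$ gives $\lambda^2-a^2\in Q$, as required.

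For the converse, suppose $\lambda^2-a^2\in Q$ for some $\lambda>0$. Here I would use the identity
\begin{equation*}
2\lambda(\lambda\pm a)=(\lambda\pm a)^2+(\lambda^2-a^2),
\end{equation*}
which one checks by expanding $(\lambda\pm a)^2=\lambda^2\pm 2\lambda a+a^2$ and adding $\lambda^2-a^2$. The right-hand side is the sum of a square (hence an element of $Q$) and $\lambda^2-a^2\in Q$, so it lies in $Q$; dividing by $2\lambda>0$ yields $\lambda\pm a\in Q$, i.e.\ $a\in\sA_b(Q)$.

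No serious obstacle is anticipated: the whole content lies in spotting the two algebraic identities. The mild subtlety is that $Q$ is not assumed closed under arbitrary products, so one cannot simply write $\lambda^2-a^2=(\lambda-a)(\lambda+a)$ and be done; the identities above circumvent this by multiplying by an auxiliary square, which is legitimate in any quadratic module.
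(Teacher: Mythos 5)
Your proof is correct and uses essentially the same approach as the paper: both directions rest on exactly the two identities $2\lambda(\lambda^2-a^2)=(\lambda+a)^2(\lambda-a)+(\lambda-a)^2(\lambda+a)$ and $2\lambda(\lambda\pm a)=(\lambda\pm a)^2+(\lambda^2-a^2)$, which are the ones the paper writes down (merely presented there with the division by $2\lambda$ done at the outset). The preliminary remarks about $Q$ containing squares and absorbing nonnegative scalars match the paper's discussion following the definition of quadratic module.
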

\begin{proof}
    If $\lambda \pm a\in Q$ for $\lambda>0$, then  
    \begin{align*}
        \lambda^2 -a^2= \frac{1}{2\lambda}\big[(\lambda+a)^2(\lambda -a) +(\lambda -a)^2(\lambda + a)\big] \in Q.\end{align*}
    Conversely, if $\lambda^2-a^2 \in Q$ and $\lambda >0$, then
    \begin{align*}\hspace{2,7cm}
        \lambda\pm a=\frac{1}{2\lambda}\big[(\lambda^2-a^2) +(\lambda\pm a)^2 \big] \in Q.
        \hspace{3cm} \Box
    \end{align*}
\end{proof}
\begin{lemma}\label{boundedele2} Suppose that $Q$ is a quadratic module or a semiring of ${\sA}$. 
    \begin{itemize}
        \item[\em (i)]\,  ${\sA}_b(Q)$ is a unital  subalgebra of ${\sA}$.
        \item[\em (ii)]\,\,  If the algebra ${\sA}$ is generated by elements $a_1,\dotsc,a_n$, then $Q$ is Archimedean if and only if each $a_i$ there exists a  $\lambda_i>0$ such that $\lambda_i\pm a_i \in Q$.
    \end{itemize}
\end{lemma}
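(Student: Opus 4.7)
The plan is to first establish (i), that $\sA_b(Q)$ is a unital subalgebra, and then derive (ii) as an immediate consequence. For (i) I would check in turn that $\sA_b(Q)$ contains $1$, is closed under real scalar multiplication, addition, and --- the only delicate step --- multiplication.

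The first three properties are routine. Since $2\pm 1\in\{1,3\}\subseteq Q$ (using $1\in Q$ and $Q+Q\subseteq Q$), we have $1\in\sA_b(Q)$. Closure under addition is immediate: if $\lambda\pm a\in Q$ and $\mu\pm b\in Q$, then $(\lambda+\mu)\pm(a+b)=(\lambda\pm a)+(\mu\pm b)\in Q$. For scalar multiplication by $\alpha\in\R$ one uses that $\alpha\cdot Q\subseteq Q$ for $\alpha\geq 0$ (valid for quadratic modules by the remark following \eqref{axiomquadmodule} with $a=\sqrt{\alpha}$, and for semirings by definition); the case $\alpha<0$ then reduces to this via $|\alpha|\lambda\pm\alpha a=|\alpha|(\lambda\mp a)\in Q$.

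The main obstacle is closure under multiplication, and the argument splits according to whether $Q$ is a quadratic module or a semiring. For a quadratic module I would route the argument through Lemma \ref{boundedele1}. Assuming $\lambda\pm a\in Q$ and $\mu\pm b\in Q$, that lemma yields $\lambda^2-a^2,\ \mu^2-b^2\in Q$, and the identity
\[
\lambda^2\mu^2-a^2b^2 \;=\; \mu^2(\lambda^2-a^2)+a^2(\mu^2-b^2),
\]
combined with $\mu^2Q\subseteq Q$ and $a^2Q\subseteq Q$, gives $(\lambda\mu)^2-(ab)^2\in Q$; a second application of Lemma \ref{boundedele1} then produces $ab\in\sA_b(Q)$. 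For a semiring the argument is more direct, since $Q$ is now closed under multiplication: both $(\lambda+a)(\mu+b)$ and $(\lambda-a)(\mu-b)$ lie in $Q$, so their sum $2(\lambda\mu+ab)$ does, and likewise $2(\lambda\mu-ab)\in Q$; scaling by $\tfrac12\geq 0$ yields $\lambda\mu\pm ab\in Q$.

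Finally, (ii) follows in one step from (i). The ``only if'' direction is the very definition of Archimedean. Conversely, if each $a_i$ satisfies $\lambda_i\pm a_i\in Q$, then $a_1,\dotsc,a_n\in\sA_b(Q)$, and since by (i) this set is a unital subalgebra of $\sA$ it contains the entire subalgebra generated by $1,a_1,\dotsc,a_n$, which by hypothesis is $\sA$ itself; hence $Q$ is Archimedean.
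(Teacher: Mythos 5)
Your proof is correct and follows essentially the same route as the paper: Lemma \ref{boundedele1} together with the identity $\lambda^2\mu^2-a^2b^2=\mu^2(\lambda^2-a^2)+a^2(\mu^2-b^2)$ for the quadratic module case, and averaging the products $(\lambda\pm a)(\mu\pm b)$ for the semiring case. You simply spell out the routine closure properties (unit, scalars, addition) that the paper dismisses in one sentence, and your semiring identity is the same one the paper uses (the paper's displayed formula has typos; yours is the corrected version).
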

\begin{proof} 
    (i): Clearly,  sums  and scalar multiples of elements of ${\sA}_b(Q)$ are again in ${\sA}_b(Q)$. It suffices to verify that this holds for the product of elements $a,b \in {\sA}_b(Q)$.
    
    First we suppose that $Q$ is a quadratic module.  By Lemma \ref{boundedele1},   there are $\lambda_1 >0$ and $\lambda_2>0$
    such that $\lambda_1^2-a^2$ and $\lambda_2^2-b^2$ are in $Q$. Then
    \begin{align*}
        (\lambda_1\lambda_2)^2 - (ab)^2= \lambda_2^2(\lambda_1^2-a^2)+ a^2(\lambda_2^2-b^2) \in Q,\end{align*}
    so that $ab\in {\sA}_b(Q)$ again by  Lemma \ref{boundedele1}.
    
    Now let $Q$ be a semiring. If $\lambda_1-a\in Q $ and $\lambda_2-b\in Q$, then 
    \begin{align*}
        \lambda_1\lambda_2\mp ab =\frac{1}{2}\big(( \lambda_1\pm a)(\lambda_2-b)+ (\lambda_2\mp a)(\lambda_2+b)\big) \in Q.
    \end{align*}
    
    (ii) follows at once from (i).
      \end{proof}  

By Lemma \ref{boundedele2}(ii), 
it suffices to check  the Archimedean condition $\lambda \pm a\in Q$   for  algebra generators. Often this simplifies  proving that $Q$ is Archimedean.
\begin{corollary}\label{archirux}
    For a quadratic module $Q$  of\, $\R_d[\ux]$ the following are equivalent:
    \begin{itemize}
        \item[\em (i)]~ $Q$ is Archimedean.
        \item[\em (ii)]~\,  There exists a number $\lambda >0$ such that $\lambda -\sum_{k=1}^d x_k^2\in Q$.
        \item[\em (iii)]~\,  For any $k=1,\dotsc,d$ there exists a  $\lambda_k>0$ such that $\lambda_k-x_k^2\in Q$.
    \end{itemize}
\end{corollary}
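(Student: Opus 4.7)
The plan is to prove the chain of implications (i) $\Rightarrow$ (ii) $\Rightarrow$ (iii) $\Rightarrow$ (i), using Lemmas \ref{boundedele1} and \ref{boundedele2} as the main tools. The implication (i) $\Rightarrow$ (ii) is immediate from the definition of Archimedean: since $\sum_{k=1}^d x_k^2 \in \R_d[\ux]$, there must exist a $\lambda > 0$ with $\lambda - \sum_k x_k^2 \in Q$.

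For (ii) $\Rightarrow$ (iii), I would fix $k$ and write
\begin{align*}
\lambda - x_k^2 = \Bigl(\lambda - \sum_{j=1}^d x_j^2\Bigr) + \sum_{j \neq k} x_j^2.
\end{align*}
Here the first summand is in $Q$ by assumption, and each $x_j^2 = x_j^2 \cdot 1 \in Q$ because $1 \in Q$ and $Q$ is closed under multiplication by squares. Since $Q + Q \subseteq Q$, the sum lies in $Q$, giving (iii) with $\lambda_k = \lambda$.

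For the interesting direction (iii) $\Rightarrow$ (i), I would apply Lemma \ref{boundedele1}: the hypothesis $\lambda_k - x_k^2 \in Q$ is exactly the criterion there, so $x_k \in \sA_b(Q)$, meaning there is some $\mu_k > 0$ with $\mu_k \pm x_k \in Q$. Since the polynomial algebra $\R_d[\ux]$ is generated as a real algebra by $x_1, \dotsc, x_d$, Lemma \ref{boundedele2}(ii) now directly yields that $Q$ is Archimedean.

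There is no real obstacle here; the corollary is essentially a repackaging of the two preceding lemmas for the concrete algebra $\R_d[\ux]$. The only small point worth being explicit about is the observation used in (ii) $\Rightarrow$ (iii) that every square already lies in a quadratic module, which is why individual $x_j^2$ terms can be absorbed freely into $Q$.
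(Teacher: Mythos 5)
Your proof is correct and follows essentially the same route as the paper: the same decomposition $\lambda - x_k^2 = (\lambda - \sum_j x_j^2) + \sum_{j\neq k} x_j^2$ for (ii)~$\Rightarrow$~(iii), and the same invocation of Lemma~\ref{boundedele1} followed by Lemma~\ref{boundedele2}(ii) for (iii)~$\Rightarrow$~(i). Your added remark that each $x_j^2 = x_j^2 \cdot 1 \in Q$ is a small but correct justification that the paper leaves implicit.
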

\begin{proof}
    (i)$\to$(ii) is clear by definition. If $\lambda -\sum_{j=1}^d x_j^2\in Q$, then \begin{align*}\lambda- x_k^2=\lambda -\sum\nolimits_j x_j^2~ +~ \sum\nolimits_{j\neq k}x_j^2 \in Q.\end{align*} This  proves (ii)$\to$(iii). Finally, if (iii) holds,  then $x_k\in {\sA}_b(Q)$ by Lemma \ref{boundedele1} and hence ${\sA}_b(Q)={\sA}$ by Lemma \ref{boundedele2}(ii). Thus, (iii)$\to$(i).
      \end{proof}  

Note that $S=\R_+ \cdot 1$ is a semiring, so semirings could be rather ``small''. 
\begin{definition} A semiring $S$ is called \emph{generating} if\, $A=S-S$.
\end{definition}
An Archimedean semiring is always generating, since $a=\lambda  -(\lambda  -a)$ for $a\in A$ and $\lambda \in \R$.

\begin{corollary}\label{archicompact}
    If the quadratic module $Q({\sf f})$ of $\R_d[\ux]$ is Archimedean, then the set $\cK({\sf f})$ is compact.
\end{corollary}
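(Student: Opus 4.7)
My plan is to exploit Corollary~\ref{archirux}, which characterizes Archimedeanness of a quadratic module in $\R_d[\ux]$ by the existence of a scalar $\lambda>0$ with $\lambda-\sum_{k=1}^{d}x_k^2\in Q({\sf f})$. Once such a $\lambda$ is in hand, I would combine it with the obvious nonnegativity of every element of $Q({\sf f})$ on $\cK({\sf f})$ to get a ball containing $\cK({\sf f})$, and then note that $\cK({\sf f})$ is closed.

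In more detail: first I would apply Corollary~\ref{archirux}, (i)$\to$(ii), to obtain $\lambda>0$ with $p:=\lambda-\sum_{k=1}^{d}x_k^2\in Q({\sf f})$. Next, using the explicit form~(\ref{quadraticqf}) of $Q({\sf f})$, I would write $p=\sigma_0+f_1\sigma_1+\cdots+f_k\sigma_k$ with $\sigma_i\in\sum\R_d[\ux]^2$. For any $x\in\cK({\sf f})$ we have $f_j(x)\geq 0$ by~(\ref{basicclosed}) and $\sigma_i(x)\geq 0$ because the $\sigma_i$ are sums of squares; hence $p(x)\geq 0$, i.e.
\[
\sum_{k=1}^{d}x_k^2\leq\lambda\quad\text{for all }x\in\cK({\sf f}).
\]
This shows $\cK({\sf f})\subseteq\{x\in\R^d:\|x\|^2\leq\lambda\}$, so $\cK({\sf f})$ is bounded.

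Finally, $\cK({\sf f})$ is closed in $\R^d$ as the intersection of the preimages $f_j^{-1}([0,\infty))$ under the continuous polynomial maps $f_j$. A closed bounded subset of $\R^d$ is compact by Heine--Borel, which completes the argument.

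Honestly, I do not expect any serious obstacle here: the only nontrivial ingredient is Corollary~\ref{archirux}, which is already proved, and once one has a scalar bound on $\sum x_k^2$ valid on $\cK({\sf f})$, the conclusion is immediate. The mild subtlety worth flagging is that the argument uses \emph{both} the definition of $\cK({\sf f})$ (closedness, and nonnegativity of the $f_j$) and the defining form of $Q({\sf f})$ (to see that every element evaluates to something $\geq 0$ on $\cK({\sf f})$); no Positivstellensatz or deeper real-algebraic machinery is required.
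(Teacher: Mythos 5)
Your proof is correct and follows essentially the same route as the paper's: invoke Corollary~\ref{archirux} to obtain $\lambda-\sum_k x_k^2\in Q({\sf f})$, observe that every element of $Q({\sf f})$ is nonnegative on $\cK({\sf f})$, and conclude that $\cK({\sf f})$ lies in a closed ball. You merely spell out the nonnegativity via the explicit form (\ref{quadraticqf}) and the closedness of $\cK({\sf f})$, both of which the paper leaves implicit.
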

\begin{proof}
    By the respective definitions, polynomials of  $Q({\sf f})$ are nonnegative on $\cK({\sf f})$.
    Since $Q({\sf f})$ is Archimedean, $\lambda -\sum_{k=1}^d x_k^2\in Q({\sf f})$ for some $\lambda >0$ by Corollary \ref{archirux}, so  $\cK({\sf f})$ is contained in the ball centered at the origin with radius $\sqrt{\lambda}$. 
      \end{proof}  

The converse of Corollary \ref{archicompact}  does not hold, as the following example shows.
(However, it does hold for the preordering $T({\sf f})$ as shown by Proposition \ref{prearchcom} below.)
\begin{example}\label{archnoncompact}
    Let $f_1=2x_1-1$, $f_2=2x_2-1$, $f_3=1-x_1x_2$. Then the set $\cK({\sf f})$ is compact, but $Q({\sf f})$ is not Archimedean (see [PD, p. 146] for a proof). $\hfill \circ$
\end{example}

The following  separation result will be used in Sections \ref{reparchimodiules} and \ref{Operator-theoreticappraochtothemomentprblem}. 

\begin{proposition}\label{eidelheit}
    Let $C$ be an Archimedean  unital cone of ${\sA}$. If $a_0\in {\sA}$ and $a_0\notin C$, there exists a $C$-positive linear functional $\varphi$ on ${\sA}$ such that $\varphi(1)=1$ and $\varphi(a_0)\leq 0$. The functional $\varphi$ may be chosen as an extremal functional of the dual cone \begin{align}\label{cwedge}
        C^\wedge:=\{ L\in A^*: L(c)\geq 0~~~ \textup{for}~~ c\in C\, \}.
    \end{align}
\end{proposition}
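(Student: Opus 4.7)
The plan is a two-stage argument: first use Hahn--Banach with a suitable Minkowski functional to produce \emph{some} $C$-positive $\varphi$ with $\varphi(1)=1$ and $\varphi(a_0)\leq 0$, then apply Krein--Milman in the weak-$\ast$ topology to upgrade $\varphi$ to an extremal functional of the dual cone $C^{\wedge}$.

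For the first stage, introduce
\[
p(a):=\inf\{\lambda\in\R:\lambda\cdot 1-a\in C\},\qquad a\in\sA.
\]
The defining set is non-empty by the Archimedean hypothesis. A short preliminary step shows $-1\notin C$: otherwise, combining with any Archimedean bound $\lambda-a\in C$ would give both $a$ and $-a$ in $C$ for every $a$, forcing $C=\sA$ and contradicting $a_0\notin C$. From $-1\notin C$ one deduces $p(\lambda\cdot 1)=\lambda$ and rules out $p(a)=-\infty$, so $p$ is $\R$-valued. Standard manipulations with the defining infimum give sublinearity: $p(\alpha a)=\alpha p(a)$ for $\alpha\geq 0$ and $p(a+b)\leq p(a)+p(b)$. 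The hypothesis $a_0\notin C$ reads $p(-a_0)\geq 0$. I would then define $\psi(\lambda\cdot 1+\alpha a_0):=\lambda-\alpha\,p(-a_0)$ on the subspace $W=\R\cdot 1+\R\cdot a_0$; the inequality $\psi\leq p$ on $W$ reduces, by splitting into $\alpha\geq 0$ and $\alpha<0$, to the sublinearity of $p$ and the trivial bound $p(a_0)+p(-a_0)\geq p(0)=0$. The Hahn--Banach theorem then extends $\psi$ to a linear $\varphi:\sA\to\R$ with $\varphi\leq p$ globally, so that $\varphi(1)=1$, $\varphi(a_0)=-p(-a_0)\leq 0$, and $\varphi(-c)\leq p(-c)\leq 0$ for $c\in C$, giving $\varphi\in C^{\wedge}$.

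For the second stage, consider the base $K:=\{L\in C^{\wedge}:L(1)=1\}$. The Archimedean bounds $\lambda\pm a\in C$ give $|L(a)|\leq\lambda$ uniformly in $L\in K$, so $K$ sits inside a compact product of intervals and is weak-$\ast$ closed; Tychonoff then makes $K$ weak-$\ast$ compact. The face
\[
F:=\bigl\{L\in K:L(a_0)=\textstyle\min_{L'\in K}L'(a_0)\bigr\}
\]
is non-empty (by compactness and continuity of $L\mapsto L(a_0)$), and by Krein--Milman applied to $F$ it has an extreme point $\varphi^{\ast}$, which is automatically extreme in $K$ (extreme points of a face are extreme in the ambient set). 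By the first stage, the minimum defining $F$ is $\leq 0$, so $\varphi^{\ast}(a_0)\leq 0$. That $\varphi^{\ast}$ spans an extreme ray of $C^{\wedge}$ follows by a standard scaling argument: in any decomposition $\varphi^{\ast}=L_1+L_2$ with $L_i\in C^{\wedge}$ one has $L_1(1)+L_2(1)=1$ with $L_i(1)\geq 0$; when some $L_i(1)=0$, the Archimedean bound forces $L_i\equiv 0$, and otherwise the decomposition normalizes to a non-trivial convex combination in $K$ that must be trivial by extremality.

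The delicate point is verifying that $p$ is everywhere \emph{finite} and that the piecewise check $\psi\leq p$ on $W$ really only uses the sublinearity of $p$; once $p$ is an honest $\R$-valued sublinear functional, the Hahn--Banach and Krein--Milman steps are routine.
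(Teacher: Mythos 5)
Your argument is correct and complete, and it is in fact a from-scratch reconstruction of what the paper delegates to a cited black-box result. The paper's proof is only two lines: it observes that the Archimedean hypothesis makes $1$ an internal point and an order unit for $C$, and then invokes a packaged separation theorem (Proposition C.5 of [Sm20]) that directly delivers an \emph{extremal} functional of $C^\wedge$ with $\varphi(1)=1$ and $\varphi(a_0)\leq 0$. Your first stage --- the Minkowski-type sublinear functional $p(a)=\inf\{\lambda:\lambda\cdot 1-a\in C\}$, the finiteness check via $-1\notin C$, the two-dimensional extension with the piecewise verification reducing to $p(a_0)+p(-a_0)\geq 0$, and Hahn--Banach --- is precisely an unpacked Eidelheit-type separation argument that the Archimedean condition makes work (because $p$ is finite exactly because $1$ is an order unit). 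Your second stage (weak-$\ast$ compactness of the base $K$ via Tychonoff, Krein--Milman applied to the minimizing face $F$, and the normalization argument upgrading extreme points of $K$ to extreme rays of $C^\wedge$) supplies the extremality claim for which the paper simply cites Proposition C.5 in [Sm20]. So the routes are mathematically the same idea, but yours is self-contained whereas the paper's is a pointer; the small price you pay is having to carry out the Krein--Milman step explicitly, and the benefit is that a reader without [Sm20] can follow the full argument. One fine point worth being explicit about when $p(-a_0)=0$ (which happens when $0+a_0\notin C$ but $\varepsilon+a_0\in C$ for all $\varepsilon>0$): $a_0\notin C$ only yields $p(-a_0)\geq 0$, not $p(-a_0)>0$, so you get $\varphi(a_0)\leq 0$ as the statement requires but cannot hope for strict negativity --- your proof does treat this correctly, and it is actually unavoidable since for $a_0\in C^\dagger\setminus C$ every $C$-positive normalized functional has $\varphi(a_0)\geq 0$.
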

\begin{proof}
    Let $a\in {\sA}$ and choose $\lambda >0$ such that\, $\lambda \pm a\in C$. If\,  $0<\delta \leq\lambda^{-1}$,\, then $\delta^{-1} \pm a\in C$ and hence $1\pm \delta a \in C$. Thus   $1$ is an internal point of $C$ and an order unit for  $C$. 
    Therefore a separation theorem for convex sets (see e.g. Proposition C.5 in [Sm20]) applies, so  there exists an extremal functional $\varphi$ of $C^\wedge$ such that $\varphi(1)=1$ and $\varphi(a_0)\leq 0$. (Without the extremality of $\varphi$ this result follows also  from Eidelheit's separation Theorem A.27.)
      \end{proof}  

\begin{example}
    Let ${\sA}=\R_d[\ux]$ and let $K$ be a closed subset of $\R^d$. If $C$ is the preordering ${\Pos} (K)$ of nonnegative polynomials on $K$, then ${\sA}_b(C)$ is just the set of bounded polynomials on $K$. Hence $C$ is Archimedean if and only if $K$ is compact. $\hfill \circ$
\end{example}

Recall from Definition 1.13 that  $\hat{{\sA}}$ denotes the set of  characters of the real algebra ${\sA}$, that is,  the set of unital algebra homomorphism $\chi:\sA\to \R$. 

For  a subset   $C$  of ${\sA}$ we define\index[sym]{KCQA@$\cK(C)$}  
\begin{align}\label{definionkq}
    \cK(C):=\{ \chi\in \hat{{\sA}}:  \chi(c)\geq 0 ~~ {\rm for~all}~c\in C\}.
\end{align}
\begin{example}\label{ckcpoly} ${\sA}=\R_d[\ux]$\\ Then $\hat{A}$ is the set of   evaluations $\chi_t(p)=p(t), p\in {\sA}$, at points of $\R^d$. As usual,
    we identify $\chi_t$ and $t$, so that $\hat{A}\cong \R^d$. Then, if $C$ is the quadratic module $Q({\sf f})$
    defined by (\ref{quadraticqf}) or\, $C$ is the semiring $S({\sf f})$ defined by (\ref{semiringf}) or\, $C$ is the preordering $T({\sf f})$ defined by (\ref{preorderingtf}), 
    the set $\cK(C)$ is just the semi-algebraic  set $\cK({\sf f})$
    given by (\ref{basicclosed}). \hfill $\circ$
\end{example}
Let $C$ be a quadratic module or a semiring. The set $C^{\sat}={\Pos}(\cK(C))$ of all $f\in {\sA}$ which  are nonnegative on the set $\cK(C)$ is obviously a preordering of ${\sA}$ that contains $C$.
Then $C$ is called {\it saturated}\index{Quadratic module! saturated ! Semiring!} if\, $C=C^{\sat}$, that is, if $C$ is equal to its {\it saturation}\, $Q^{\sat}$. 

Real algebraic geometry is treated in the books [BCRo], [PD], [Ms1]; a  recent survey on positivity and sums of squares is given in [Sr3].


\section{Localizing functionals and supports of representing measures}\label{localizing}

Haviland's Theorem 1.12 shows that there is a close  link between positive polynomials and the moment problem. However, in order to apply this result reasonable descriptions of positive, or at least of strictly positive, polynomials are needed.

Recall that the moment problem for a functional $L$ on the interval $[a,b]$  is solvable if and only if $L(p^2+(x-a)(b-x)q^2)\geq 0$ for all $p,q \in \R[x]$. This condition means that two infinite  Hankel matrices are positive semidefinite and this holds if and only if all  principal minors of these matrices are nonnegative. In the multidimensional case we are trying to find similar  solvability criteria. For this it is natural to  consider sets that are  defined by finitely many polynomial inequalities $f_1(x)\geq 0,\dotsc, f_k(x)\geq 0$. These are precisely the basic closed semi-algebraic sets $\cK({\sf f})$, so we have entered  the setup of real algebraic geometry. 

Let us fix a semi-algebraic set   $\cK({\sf f})$. Let $L$ be a $\cK({\sf f})$-moment functional, that is, $L$ is of the form\, $L(p)=L^\mu(p)\equiv \int p\,d\mu$ for  $p \in \R_d[\ux]$,\, where $\mu$ is a Radon measure   supported on  $\cK({\sf f})$. If $g\in \R_d[x]$ is  nonnegative  on   $\cK({\sf f})$, then obviously 
\begin{align}\label{hcondition}
    L(gp^2)\geq 0 \quad{\rm for~ all}\quad p \in \R_d[\ux],
\end{align}
so (\ref{hcondition}) is  a\, {\it necessary}\, condition for $L$ being a  $\cK({\sf f})$-moment functional.

The overall strategy in this chapter and the next is to solve the  $\cK({\sf f})$-moment problem by  {\it   finitely many sufficient} conditions of the form (\ref{hcondition}). That is, our aim is to ``find"  nonnegative polynomials $g_1,\dotsc,g_m$\, on $\cK({\sf f})$ such that the following  holds: 

{\it Each linear functional $L$ on $\R_d[\ux]$ which  satisfies   condition (\ref{hcondition}) for $g=g_1,\dotsc,g_m$ and $g=1$ is a 
    $\cK({\sf f})$-moment functional.} (The polynomial $g=1$ is  needed in order to ensure that $L$ itself is a positive  functional.)

In general it is not sufficient  to take only the polynomials $f_j$ themselves as $g_j$. For  our main results (Theorems \ref{mpschm} and 13.10), the positivity  of the functional on the  preordering $T({\sf f})$ is assumed. This means that condition (\ref{hcondition}) is required for {\it all} mixed products  $g=f_1^{e_1}\cdots f_k^{e_k}$, where $e_j\in \{0,1\}$ for $j=1,\dotsc,k$.

\begin{definition}\label{localizedfunctional}
    Let $L$ be a  linear functional on  $\R_d[\ux]$ and let $g\in\R_d[\ux]$. The  linear functional $L_g$\index[sym]{LAgA@$L_g$}  on  $\R_d[\ux]$ defined by $L_g(p)=L(gp),\,  p\in\R_d[\ux]$, is called the \emph{localization} of $L$ at $g$  or simply the \emph{localized functional}.\index{Functional! localized, localization}
\end{definition} 
Condition (\ref{hcondition}) means  the localized functional $L_g$ is  a positive linear functional on  $\R_d[\ux].$ Further, if $L$ comes from a measure $\mu$ supported on  $\cK({\sf f})$ and $g$ is nonnegative  on $\cK({\sf f})$, then 
\begin{align*}
    L_g(p)=L(gp)=\int_{\cK({\sf f})} p(x)\, g(x)d\mu(x), ~~p\in\R_d[\ux],
\end{align*}
that is, 
$L_g$ is given by the measure $\nu$ on $\cK({\sf f})$  defined by $d\nu=g(x)d\mu$. 

Localized functionals will play an important role throughout our  treatment. They are used to localize the support of the measure (see Propositions  \ref{cqfimpliessuppckf} and \ref{zerosetsupport} and Theorem 14.25) or to derive determinacy criteria (see Theorem 14.12).

Now we  introduce two other objects  associated with the functional $L$ and the polynomial $g$.  Let $s=(s_\alpha)_{\alpha \in \N_0^d}$ be the $d$-sequence given by  $s_\alpha=L(x^\alpha)$ and  write $g=\sum_\gamma g_\gamma x^\gamma$. Then we define a $d$-sequence $g(E)s=((g(E)s)_\alpha)_{\alpha\in \N_0^d}$ by 
\begin{align*}(g(E)s)_\alpha:=\sum\nolimits_\gamma ~ g_\gamma s_{\alpha+\gamma},~~\alpha\in \N_0^d,\end{align*}  
and an infinite matrix\, $H(gs)=(H(gs)_{\alpha,\beta})_{\alpha,\beta\in \N_0^d} $\, over\, $\N_0^d\times \N_0^d$ with entries\index[sym]{HAgAsA@$H(gs)$} 
\begin{align}\label{localhaneklentries}
    H(gs)_{\alpha,\beta}:= \sum\nolimits_\gamma ~ g_\gamma  s_{\alpha+\beta+\gamma},~~\alpha,\beta\in \N_0^d.
\end{align}
Using these definitions
for  $p(x)=\sum_\alpha
a_\alpha x^{\alpha}\in \R_d[\ux]$ we compute
\begin{align}\label{hcondition1}
    L_s(gp^2)=\sum_{\alpha,\beta,\gamma} a_\alpha a_\beta g_\gamma s_{\alpha+\beta+\gamma} =\sum_{\alpha,\beta} a_\alpha a_\beta (g(E)s)_{\alpha+\beta}=\sum_{\alpha,\beta}\, a_\alpha a_\beta H(gs)_{\alpha,\beta} .
\end{align}
This shows that $g(E)s$ is the\, $d$-sequence for the functional $L_g$ and  $H(gs)$ is a Hankel matrix for the sequence $g(E)s$. The matrix $H(gs)$ is called the {\it localized Hankel matrix}\index{Hankel matrix! localized} of $s$ at $g$.
\begin{proposition}\label{equvialentsolvmp}
    Let\, $Q({\sf g})$\, be  the quadratic module generated by the finite subset\, ${\sf g}=\{g_1,\dotsc,g_m\}$  of\,  $\R_d[\ux]$. Let $L$ be a linear functional on $\R_d[\ux]$ and $s=(s_\alpha)_{\alpha \in \N_0^d}$  the $d$-sequence defined by\, $s_\alpha=L(x^\alpha).$  
    Then the following are equivalent:
    \begin{itemize}
        \item[\em (i)]~ $L$ is a $Q({\sf g})$-positive linear functional on\,  $\R_d[\ux]$.
        \item[\em (ii)]~  $L, L_{g_1},\dotsc,L_{g_m}$ are positive linear  functionals on\, $\R_d[\ux]$.
        \item[\em (iii)]~\,  $s, g_1(E)s,\dotsc,g_m(E)s$ are positive semidefinite $d$-sequences.
        \item[\em (iv)]~\, $H(s),H(g_1s),\dotsc, H(g_ms)$ are positive semidefinite matrices.
    \end{itemize}
\end{proposition}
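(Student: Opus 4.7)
The plan is to proceed by proving a cycle of implications (i)$\Rightarrow$(ii)$\Rightarrow$(iii)$\Rightarrow$(iv)$\Rightarrow$(i), although in fact each equivalence is essentially a direct translation of the previous one via the computation already carried out in (\ref{hcondition1}).

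For (i)$\Leftrightarrow$(ii), I would unfold the definition of $Q(\sf g)$ from (\ref{quadraticqf}): every element has the form $\sigma_0 + g_1\sigma_1 + \cdots + g_m\sigma_m$ with $\sigma_i \in \sum \R_d[\ux]^2$. Writing $L(g_j \sigma_j) = L_{g_j}(\sigma_j)$, the $Q(\sf g)$-positivity of $L$ says exactly that $L(\sigma_0) + L_{g_1}(\sigma_1) + \cdots + L_{g_m}(\sigma_m) \geq 0$ for all choices of sums of squares $\sigma_0,\dotsc,\sigma_m$. Taking all but one $\sigma_i$ equal to $0$ yields that each of $L, L_{g_1}, \dotsc, L_{g_m}$ is nonnegative on $\sum \R_d[\ux]^2$, that is, each is a positive linear functional. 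Conversely, if each individual localization is positive, summing reproduces $Q(\sf g)$-positivity. This step is the slightly non-trivial one because it uses the definitional decomposition of $Q(\sf g)$ and the elementary observation that $\sum \R_d[\ux]^2$-positivity is the same as positivity on all squares $p^2$.

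For (ii)$\Leftrightarrow$(iii), I would invoke the identity (\ref{hcondition1}) with $g = 1$ and $g = g_j$. Writing $p = \sum_\alpha a_\alpha x^\alpha$, one has
\begin{align*}
L(p^2) = \sum_{\alpha,\beta} a_\alpha a_\beta \, s_{\alpha+\beta}, \qquad L_{g_j}(p^2) = \sum_{\alpha,\beta} a_\alpha a_\beta \, (g_j(E)s)_{\alpha+\beta}.
\end{align*}
Hence $L$ (resp.\ $L_{g_j}$) is a positive functional precisely when $s$ (resp.\ $g_j(E)s$) is a positive semidefinite $d$-sequence, by the standard definition of positive semidefiniteness of a multi-indexed sequence.

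For (iii)$\Leftrightarrow$(iv), the Hankel matrix $H(t)$ of a $d$-sequence $t=(t_\alpha)$ has entries $H(t)_{\alpha,\beta} = t_{\alpha+\beta}$, so positive semidefiniteness of the sequence is by definition positive semidefiniteness of the Hankel matrix (acting on finitely supported vectors). Applying this to $t = s$ and $t = g_j(E)s$, and observing from (\ref{localhaneklentries}) that $H(g_j s)_{\alpha,\beta} = (g_j(E)s)_{\alpha+\beta}$, gives the equivalence. The only real obstacle is keeping the bookkeeping of indices clean; once (\ref{hcondition1}) is in place all four conditions are visibly the same statement phrased on different levels (functional, localized functional, sequence, matrix).
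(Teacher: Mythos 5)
Your proposal is correct and follows essentially the same route as the paper: the (i)$\Leftrightarrow$(ii) equivalence is read off from the definitional decomposition of $Q({\sf g})$ together with the definition of localized functionals, and the remaining equivalences are all instances of the computation (\ref{hcondition1}), which the paper packages as an appeal to its Proposition 2.7 (positivity of a functional $\Leftrightarrow$ positive semidefiniteness of the associated sequence $\Leftrightarrow$ positive semidefiniteness of its Hankel matrix). You simply unfold that packaged reference into explicit index bookkeeping; the mathematical content is identical.
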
 
\begin{proof}
    The equivalence of (i) and (ii) is immediate from the
    definition (\ref{quadraticqf}) of the quadratic module $Q({\sf g})$ and  Definition \ref{localizedfunctional} of the  localized functionals $L_{g_j}$. 
    
    By Proposition  2.7, a linear functional is positive if and only if the corresponding sequence is positive semidefinite, or equivalently,  the  Hankel matrix is positive semidefinite. 
    By   (\ref{hcondition1}) this gives the equivalence of (ii),  (iii),  and (iv). 
      \end{proof}  

The solvability conditions in the  existence theorems for the moment problem 
in this chapter and the next 
are given in the form (i) for some  finitely  generated quadratic module or   preordering. This  means  that condition (\ref{hcondition}) is  satisfied for  finitely many polynomials $g$.
Proposition \ref{equvialentsolvmp} says there  are various {\it equivalent} formulations of these
solvability criteria: They  can be  expressed in the language  of real algebraic geometry (in terms of quadratic modules, semirings or preorderings),
of $*$-algebras (as positive  functionals on  $\R_d[\ux]$), of  matrices (by the positive semidefiniteness  of Hankel matrices) or of sequences (by the positive semidefiniteness of sequences).

The next proposition contains a useful  criterion for localizing  supports  of  representing measures. We denote by  $\cM_+(\R^d)$\index[sym]{MCplusRD@$\cM_+(\R^d)$} the set of Radon measure $\mu$ on $\R^d$ for which all moments are finite, or equivalently,  $\int |p(x)|\, d\mu < \infty$\, for all $p\in \R_d[\ux]$.
\begin{proposition}\label{locquader}
    Let $\mu\in \cM_+(\R^d)$ and let $s$ be the moment sequence of $\mu$. Further, let $g_j\in \R_d[\ux]$ and  $c_j\geq 0$  be given for $j=1,\dotsc,k.$ Set 
    \begin{align}\label{defsetK}
        \cK=\{ x\in \R^d:  |g_j(x)|\leq c_j ~~~  \text{for} ~~j=1,\dotsc,k\}.
    \end{align}
    Then we have\, ${\rm supp} ~\mu \subseteq \cK$ if and only if   there exist constants $M_j>0$  such that 
    \begin{align}\label{condtionL_my}
        L_s(g_j^{2n})\leq M_jc_j^{2n}~~~\text{for}~~ n\in \N,~j=1,\dotsc,k.
    \end{align}
\end{proposition}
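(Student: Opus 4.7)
The plan is to treat the two implications separately and in each case exploit the observation that $L_s(g_j^{2n}) = \int g_j^{2n}\, d\mu$.

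For the forward direction, if $\supp \mu \subseteq \cK$, then on the support we have $|g_j(x)|^{2n} \leq c_j^{2n}$, so
\[
L_s(g_j^{2n}) \;=\; \int_{\cK} g_j(x)^{2n}\, d\mu(x) \;\leq\; c_j^{2n}\, \mu(\R^d).
\]
The total mass $\mu(\R^d) = L_s(1)$ is finite since $\mu\in\cM_+(\R^d)$, so the constant $M_j := \mu(\R^d)$ works simultaneously for all $j$.

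For the converse, I would argue by contradiction: take a point $x_0 \notin \cK$ and show $x_0 \notin \supp \mu$. By assumption $|g_{j_0}(x_0)| > c_{j_0}$ for some index $j_0$. The main case is $c_{j_0} > 0$: choose $\varepsilon > 0$ with $|g_{j_0}(x_0)| \geq c_{j_0} + \varepsilon$, and use continuity of $g_{j_0}$ to pick an open neighbourhood $U$ of $x_0$ on which $|g_{j_0}(x)| \geq c_{j_0} + \varepsilon$. Then
\[
M_{j_0}\, c_{j_0}^{2n} \;\geq\; L_s(g_{j_0}^{2n}) \;\geq\; \int_U g_{j_0}^{2n}\, d\mu \;\geq\; (c_{j_0}+\varepsilon)^{2n}\, \mu(U),
\]
hence $\mu(U) \leq M_{j_0}\bigl(c_{j_0}/(c_{j_0}+\varepsilon)\bigr)^{2n}$. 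Letting $n\to\infty$ forces $\mu(U)=0$, so $x_0\notin\supp\mu$. In the degenerate case $c_{j_0}=0$, the hypothesis gives $\int g_{j_0}^{2n}\,d\mu \leq 0$, which combined with nonnegativity of the integrand forces $g_{j_0}=0$ $\mu$-almost everywhere; since $g_{j_0}$ is nonzero on some open neighbourhood of $x_0$ by continuity, the same conclusion $x_0\notin\supp\mu$ follows.

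The argument is essentially routine measure theory once one notices the geometric-type inequality $(c_{j_0}+\varepsilon)^{2n}\mu(U) \leq M_{j_0} c_{j_0}^{2n}$. The only subtle point is the need to treat $c_{j_0} = 0$ separately, since the exponential-decay trick requires $c_{j_0}>0$ so that the ratio $c_{j_0}/(c_{j_0}+\varepsilon)$ is strictly less than $1$. Other than that no real obstacle is expected, and in particular the polynomial nature of the $g_j$ is used only through continuity and finiteness of all moments of $\mu$.
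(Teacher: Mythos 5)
Your proof is correct and follows essentially the same route as the paper's: the forward direction is a direct estimate using $\mu(\R^d)<\infty$, and the converse is the exponential-growth argument bounding $(c_{j_0}+\varepsilon)^{2n}\mu(U)$ by $M_{j_0}c_{j_0}^{2n}$ and letting $n\to\infty$. Your separate case $c_{j_0}=0$ is unnecessary, since the same inequality gives $\mu(U)\leq M_{j_0}\cdot 0^{2n}/\varepsilon^{2n}=0$ outright for $n\geq 1$, but it does no harm.
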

\begin{proof}
    The  only if part is obvious. We prove the  if direction and
    slightly modify the argument used in the proof of Proposition 4.1.

    Let  $t_0 \in \R^d\backslash \cK$. Then there is an index\, $j=1,\dotsc,k$ such that $|g_j(t_0)|>c_j$. Hence there exist a number $\lambda>c_j$ and a ball $U$ around $t_0$ such that $|g_j(t)|\geq \lambda$ for $t\in U$. For $n\in \N$ we then derive
    \begin{align*}
        \lambda^{2n}\mu(U)\leq \int_U g_j(t)^{2n}\, d\mu(t)\leq \int_{\R^d} g_j(t)^{2n}\, d\mu(t)=L_s(g_j^{2n})\leq M_jc_j^{2n}.
    \end{align*}
    Since $\lambda>c_j $, this is only possible for all $n\in \N$ if\, $\mu(U)=0$. Therefore, $t_0\notin{\rm supp} ~\mu$. This  proves that ${\rm supp} ~\mu \subseteq \cK.$
      \end{proof}  

We state the special case  $g_j(x)=x_j$ of Proposition \ref{locquader} separately as
\begin{corollary}\label{suppndiminterval}
    Suppose  $c_1>0,\dotsc,c_d>0$. 
    A measure  $\mu\in \cM_+(\R^d)$ with moment sequence $s$ is supported on the $d$-dimensional interval~ $[-c_1,c_1]\times\dots \times [-c_d,c_d]$~ if and only if there are positive constants $M_j$ such that
    \begin{align*}
        L_s(x_j^{2n})\equiv s_{(0,\dotsc,0,1,0,\dotsc,0)}^{2n}\leq M_jc_j^{2n}~~~~\text{for}~~n\in\N,~j=1,\dotsc,d.\end{align*}
\end{corollary}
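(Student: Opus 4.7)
The plan is to observe that this corollary is the direct specialization of Proposition \ref{locquader} obtained by setting $g_j(x)=x_j$ for $j=1,\dotsc,d$, so the strategy is simply to verify that the hypotheses and conclusions match up under this substitution.

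First, I would check that the set $\cK$ defined in (\ref{defsetK}) becomes exactly the $d$-dimensional interval in question: with $g_j(x) = x_j$, the condition $|g_j(x)| \leq c_j$ reads $|x_j| \leq c_j$, and since $c_j > 0$ this is equivalent to $x_j \in [-c_j, c_j]$. Imposing all $d$ such inequalities simultaneously gives precisely
\begin{align*}
\cK = [-c_1, c_1] \times \dotsb \times [-c_d, c_d].
\end{align*}

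Second, I would identify the localized-functional condition (\ref{condtionL_my}). With $g_j = x_j$ we have $g_j^{2n} = x_j^{2n}$, so the condition becomes
\begin{align*}
L_s(x_j^{2n}) \leq M_j c_j^{2n}, \quad n \in \N, \; j=1,\dotsc,d,
\end{align*}
which is exactly the hypothesis of the corollary (noting that $L_s(x_j^{2n})$ is the moment $s_\alpha$ with $\alpha$ equal to $2n$ times the $j$-th standard basis vector).

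Finally, I would invoke Proposition \ref{locquader} with the choice $k = d$ and $g_j(x) = x_j$ to conclude the equivalence. There is essentially no obstacle: the entire argument is a matter of matching notation to the more general statement already proved, and the one-line observation that $\{x : |x_j| \leq c_j \text{ for all } j\}$ is a Cartesian product of intervals.
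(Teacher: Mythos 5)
Your proof is correct and coincides with the paper's approach: the paper introduces this corollary explicitly as the special case $g_j(x)=x_j$ of Proposition \ref{locquader}, which is exactly the specialization you carry out.
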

The following two  propositions are   basic results about the moment problem on {\it compact} sets. Both follow  from   Weierstrass' theorem on  approximation of continuous functions by polynomials. 
\begin{proposition}\label{detcomacpcase}
    If $\mu\in \cM_+(\R^d)$ is supported on a compact set,  then $\mu$ is determinate. In particular, if $K$ is a compact subset of $\R^d$, then each $K$-moment sequence,  so each measure $\mu\in \cM(\R^d)$ supported on $K$, is determinate.
\end{proposition}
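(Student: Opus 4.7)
The plan is to reduce the problem to the case where any second representing measure must also live on the same compact set, and then finish by Weierstrass approximation.

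First, I would fix a representing measure $\nu \in \cM_+(\R^d)$ with the same moment sequence $s$ as $\mu$, and aim to show $\nu = \mu$. Since $\supp \mu$ is compact, it is contained in some cube $[-c,c]^d$ with $c>0$. For each coordinate $j$ the moments satisfy
\begin{align*}
L_s(x_j^{2n}) \;=\; \int x_j^{2n}\, d\mu \;\leq\; c^{2n}\,\mu(\R^d) \;=\; M\, c^{2n},
\end{align*}
where $M := \mu(\R^d) < \infty$. Because $\nu$ has the same moment sequence $s$, the measure $\nu$ inherits these bounds. Applying Corollary \ref{suppndiminterval} (with $c_j = c$ and $M_j = M$) to $\nu$, I conclude that $\supp \nu \subseteq [-c,c]^d$ as well. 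Thus both $\mu$ and $\nu$ are Radon measures on the compact set $K_c := [-c,c]^d$ with identical polynomial integrals.

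Next, I invoke the Weierstrass approximation theorem on $K_c$: the polynomial algebra $\R_d[\ux]$ is uniformly dense in $C(K_c)$. For any $f \in C(K_c)$ and any $\varepsilon > 0$, pick $p \in \R_d[\ux]$ with $\|f - p\|_{\infty, K_c} < \varepsilon$. Then
\begin{align*}
\Bigl|\int f \, d\mu - \int f\, d\nu\Bigr|
\;\leq\; \varepsilon\bigl(\mu(K_c) + \nu(K_c)\bigr) + \Bigl|\int p\, d\mu - \int p\, d\nu\Bigr|
\;=\; \varepsilon\bigl(\mu(K_c) + \nu(K_c)\bigr),
\end{align*}
since the polynomial integrals agree. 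Letting $\varepsilon \to 0$ shows $\int f\, d\mu = \int f\, d\nu$ for all $f \in C(K_c)$, so the Riesz representation theorem gives $\mu = \nu$ on $K_c$, and hence on $\R^d$. The ``in particular'' part is then immediate: any measure representing a $K$-moment sequence restricts from a compact set, and the same argument shows all such representing measures coincide.

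The main conceptual step is the first one, namely forcing any competing representing measure onto the same compact cube; this is where Corollary \ref{suppndiminterval} does the essential work. Once this support localization is in hand, the rest is a routine application of Weierstrass and Riesz representation, with no further obstacle.
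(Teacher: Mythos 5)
Your proof is correct and follows essentially the same approach as the paper: both use Corollary \ref{suppndiminterval} to force the support of any competing representing measure $\nu$ into the same compact box, and then use Weierstrass approximation to upgrade equality of polynomial integrals to equality of the measures. The only cosmetic differences are that you spell out the moment bound verifying the hypothesis of Corollary \ref{suppndiminterval} and close via Riesz representation, whereas the paper tests directly against $C_c(\R^d,\R)$.
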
 
\begin{proof}
    Let $\nu\in \cM_+(\R^d)$ be a measure having the same moments and so the same moment functional $L$ as $\mu$.  Fix $h\in C_c(\R^d,\R)$. We choose a compact $d$-dimensional interval $K$ containing the supports of  $\mu$ and  $h$. From Corollary \ref{suppndiminterval} it follows that ${\supp}\, \nu\subseteq K$. By  Weierstrass' theorem, there is a sequence $(p_n)_{n\in \N}$ of polynomials $p_n\in \R_d[\ux]$ converging to $h$ uniformly on $K$. Passing to the limits in the equality 
    \begin{align*}
        \int_K p_n\, d\mu=L(p_n)=\int_K p_n\, d\nu
    \end{align*}
    we get $\int  h\,d\mu =\int h \,d\nu$. Since this holds for  all $h\in C_c(\R^d,\R)$, we have $\mu=\nu$.
    \end{proof}  

\begin{proposition}\label{cqfimpliessuppckf}
    Suppose that $\mu\in \cM_+(\R^d)$ is supported on a compact set. Let   ${\sf f}=\{f_1,\dotsc,f_k\}$ be a finite subset of $\R_d[\ux]$ and assume that the moment functional  defined by $L^\mu(p) =\int p\, d\mu$, $p\in \R_d[\ux]$, is $Q({\sf f})$-positive. Then ${\rm supp} ~\mu \subseteq \cK({\sf f}).$
\end{proposition}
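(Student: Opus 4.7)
The plan is to show that for each index $j$, $f_j\geq 0$ on $\supp\mu$; intersecting these gives $\supp\mu\subseteq\cK({\sf f})$.

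First, I would observe that by the definition (\ref{quadraticqf}) of $Q({\sf f})$, the polynomial $f_j p^2$ lies in $Q({\sf f})$ for every $p\in\R_d[\ux]$ and every $j=1,\dotsc,k$ (choose $\sigma_j=p^2$ and all other $\sigma_i=0$). The $Q({\sf f})$-positivity of $L^\mu$ therefore yields $\int f_j p^2\,d\mu\geq 0$ for every such $p$. Equivalently, the signed Radon measure $\nu_j:=f_j\,d\mu$, whose total variation is supported on the compact set $\supp\mu$, satisfies $\int p^2\,d\nu_j\geq 0$ for every polynomial $p\in\R_d[\ux]$.

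Next, I would promote this positivity-on-squares to positivity of $\nu_j$ as a measure via Weierstrass' approximation theorem, exactly as in the proof of Proposition \ref{detcomacpcase}. Given an arbitrary continuous function $h\geq 0$ of compact support, the pointwise square root $\sqrt{h}$ is continuous, so on $\supp\mu$ it is the uniform limit of a sequence of polynomials $p_n$; then $p_n^2\to h$ uniformly on $\supp\mu$. Since $\nu_j$ has compact support and the $p_n^2$ are uniformly bounded there, we may pass to the limit to obtain $\int h\,d\nu_j\geq 0$. As this holds for every nonnegative continuous $h$ of compact support, the Riesz representation theorem (equivalently, inner regularity of $\mu$) forces $\nu_j$ to be a positive Radon measure, i.e.\ $f_j\geq 0$ $\mu$-almost everywhere. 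Because $f_j$ is continuous, this upgrades to $f_j\geq 0$ pointwise on $\supp\mu$, and assembling over $j$ finishes the proof.

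The only delicate point is the uniform polynomial approximation of $\sqrt{h}$ together with the passage to the limit against the signed measure $\nu_j$; compactness of $\supp\mu$ makes both routine, so I do not expect any real obstacle.
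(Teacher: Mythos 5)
Your proof is correct, but it is packaged differently from the book's argument. The book proceeds locally: it fixes a point $t_0\notin\cK({\sf f})$, picks an index $j$ and a ball $U$ around $t_0$ on which $f_j\leq -\delta$, builds the single explicit bump $h(t)=\sqrt{2\rho-\|t-t_0\|}$, approximates it uniformly by polynomials $p_n$ on a compact interval containing $\supp\mu$, and deduces from $L^\mu(f_jp_n^2)\geq 0$ and $\int f_jh^2\,d\mu\leq -\delta\rho\,\mu(U)$ that $\mu(U)=0$, hence $t_0\notin\supp\mu$. You instead argue globally: you form the signed measure $\nu_j=f_j\,d\mu$, use Weierstrass (approximating $\sqrt h$ and squaring) to upgrade the positivity of $\int p^2\,d\nu_j$ to $\int h\,d\nu_j\geq 0$ for every nonnegative $h\in C_c(\R^d)$, invoke Riesz to conclude $\nu_j\geq 0$, and read off $f_j\geq 0$ $\mu$-a.e.\ and hence pointwise on $\supp\mu$ by continuity. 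Both routes ride on Stone--Weierstrass plus the compactness of $\supp\mu$, and the limit-interchange is controlled in both by uniform convergence on a compact set carrying all the mass. Your version yields a slightly stronger and cleaner intermediate conclusion ($f_j\,d\mu$ is a positive measure for each $j$) and avoids constructing a bespoke bump, at the modest cost of quoting the Riesz representation theorem; the book's version stays entirely hands-on and needs only the single estimate against one test function. Either is a perfectly acceptable proof.
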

\begin{proof}
    Suppose that $t_0 \in \R^d\backslash \cK({\sf f})$.
    Then there exist a number $j\in \{1,\dotsc,k\}$,  a ball $U$ with radius $\rho >0$ around $t_0$, and a number $ \delta>0$ such that $f_j \leq-\delta$ on $2 U$. 
    We define a continuous function $h$ on $\R^d$ by $h(t)=\sqrt{2\rho{-}|| t -t_0||}$\, for\, $|| t - t_0|| \leq 2\rho$\, and $h(t)=0$ otherwise and take a compact $d$-dimensional interval $K$  containing $2 U$ and ${\rm supp} ~\mu$. By  Weierstrass' theorem, there is a sequence of polynomials $p_n \in \R_d[\ux]$ converging to $h$ uniformly on  $K$. 
    Then\, $f_jp_n^2\to f_jh^2$\,   uniformly on $K$ and hence
    \begin{align}\label{estiamtefkU}
        \lim_{n}\,    L^\mu(& f_j p_n^2) =\int_K (\lim_n\, f_j p_n^2)\, d\mu= \int_K~ f_j h^2\,  d\mu\nonumber = \int_{2 U} f_j(t)(2 \rho{-}|| t-t_0 ||)\, d\mu(t)\\ &
        \leq  \int_{2U} -\delta(2 \rho{-}|| t-t_0 ||)\,  d\mu\leq -\int_U\delta  \rho\, d\mu(t)=
        -\delta \rho \mu(U) .
    \end{align} 
    Since $L^\mu$ is $Q({\sf f})$-positive, we have\, $L^\mu(f_j p_n^2)\geq 0$. Therefore, $\mu(U)=0$ by (\ref{estiamtefkU}), so that   $t_0\notin{\rm supp} ~\mu$. This proves  that\, ${\rm supp} ~\mu \subseteq \cK({\sf f}).$
    \end{proof}

The assertions of Propositions \ref{detcomacpcase} and \ref{cqfimpliessuppckf} are no longer valid if the compactness assumptions are omitted. But  the counterpart of Proposition \ref{cqfimpliessuppckf} for zero sets of ideals holds without any compactness assumption.
\begin{proposition}\label{zerosetsupport}
    Let $ \mu\in \cM_+(\R^d)$ and let $\cI$ be an ideal of $\R_d[\ux]$.  If the moment functional $L^\mu$ of\, $\mu$ is $\cI$-positive,
    then $L^\mu$ annihilates $\cI$ and \,  ${\rm supp}\, \mu \subseteq \cZ(\cI )$.\\
    (As usual,  $\cZ(\cI)=\{x\in \R^d:p(x)=0~~ \text{for}~  p\in \cI\}$ is the zero set of $\cI$.)
\end{proposition}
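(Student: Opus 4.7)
The plan is to exploit the fact that $\cI$ is closed under negation and under multiplication by arbitrary elements of $\R_d[\ux]$, which makes ``$\cI$-positivity'' a very rigid condition.

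First I would deduce the annihilation statement. Let $f\in\cI$. Since $\cI$ is an ideal we have $-f=(-1)\cdot f\in\cI$ as well, so $\cI$-positivity of $L^\mu$ gives both $L^\mu(f)\geq 0$ and $L^\mu(-f)\geq 0$, whence $L^\mu(f)=0$. Thus $L^\mu$ vanishes on $\cI$.

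Next, for the support inclusion, I would pick an arbitrary $f\in\cI$ and use the ideal property once more: $f^2=f\cdot f\in\cI$, so by the first step
\begin{align*}
    0=L^\mu(f^2)=\int_{\R^d} f(x)^2\, d\mu(x).
\end{align*}
The integrand is nonnegative, so $f^2=0$ $\mu$-almost everywhere, i.e.\ $f=0$ $\mu$-almost everywhere. Because $f$ is continuous, the set $U_f:=\{x\in\R^d:f(x)\neq 0\}$ is open, and an open set of $\mu$-measure zero is disjoint from $\supp\mu$. Hence $f$ vanishes on $\supp\mu$. Since $f\in\cI$ was arbitrary,
\begin{align*}
    \supp\mu\subseteq\bigcap_{f\in\cI}\{x\in\R^d:f(x)=0\}=\cZ(\cI),
\end{align*}
which is the required inclusion.

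There is no real obstacle here; the only subtle point is the passage from ``$f=0$ $\mu$-almost everywhere'' to ``$f=0$ on $\supp\mu$'', which is why the continuity of polynomials is used together with the topological definition of $\supp\mu$ as the complement of the largest open $\mu$-null set. Note that, unlike Proposition~\ref{cqfimpliessuppckf}, no compactness of $\supp\mu$ or Weierstrass approximation is needed, because the ideal structure forces the localized integral $\int f^2\, d\mu$ itself to be zero rather than just $\int f\, p^2\, d\mu\geq 0$.
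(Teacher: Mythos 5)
Your proof is correct and follows the same two-step route as the paper: derive $L^\mu|_\cI=0$ from $\pm f\in\cI$, then use $f^2\in\cI$ to get $\int f^2\,d\mu=0$ and conclude $\supp\mu\subseteq\cZ(f)$. The only difference is cosmetic: the paper cites an earlier proposition for the implication ``$\int f^2\,d\mu=0$, $f$ continuous $\Rightarrow\supp\mu\subseteq\cZ(f)$'', whereas you re-derive it inline via the open null set $\{f\neq 0\}$.
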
 
\begin{proof}
    If $p\in \cI$, then $-p\in \cI$ and hence $L^\mu(\pm p)\geq 0$ by the $\cI$-positivity of $L^\mu$, so that $L^\mu(p)=0$. That is, $L^\mu$ annihilates $\cI$.
    
    Let $p\in \cI$. Since $p^2\in \cI$, we have $L^\mu(p^2)=\int p^2\, d\mu=0$. Therefore, from  Proposition \ref{zerosuppproposition} it follows that 
    ${\rm supp}\, \mu\subseteq \cZ(p^2)=\cZ(p)$. Thus,  ${\rm supp}\, \mu \subseteq \cZ(\cI )$.
    \end{proof}  

For a linear functional $L$ on $\R_d[\ux]$ we define\index[sym]{NCplusL@$\cN_+(L)$}
\begin{align*}
    \cN_+(L) :=\{ f\in {\Pos}(\R^d): L(p)=0\, \}.
\end{align*}
\begin{proposition}
    Let $L$ be a moment functional on $\R_d[\ux]$, that is, $L=L^\mu$ for some   $\mu\in \cM_+(\R^d)$.  Then the ideal $\cI_+(L)$  of $\R_d[\ux]$ generated by $\cN_+(L)$ is annihilated by $L$ and the support of each representing  measure of $L$ is contained in $\cZ(\cI_+(L))$.
\end{proposition}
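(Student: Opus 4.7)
The plan is to chain together a simple measure-theoretic observation with Proposition \ref{zerosetsupport}. Write $L=L^\mu$ for some $\mu\in\cM_+(\R^d)$, and take any $f\in\cN_+(L)$. Then $f\geq 0$ on $\R^d$ and $\int f\,d\mu=L(f)=0$, so $f=0$ $\mu$-almost everywhere, which by the standard zero-support argument (Proposition \ref{zerosuppproposition}, already invoked in the proof of Proposition \ref{zerosetsupport}) gives $\supp\mu\subseteq \cZ(f)$. Hence every $f\in\cN_+(L)$ vanishes identically on $\supp\mu$.

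Next I would show that $L$ annihilates the whole ideal $\cI_+(L)$. A typical element of $\cI_+(L)$ has the form $g=\sum_i p_i f_i$ with $p_i\in\R_d[\ux]$ and $f_i\in\cN_+(L)$; by the previous paragraph each $f_i$ vanishes on $\supp\mu$, so $g\equiv 0$ on $\supp\mu$, and therefore $L(g)=\int g\,d\mu=0$. In particular $L(g)\geq 0$ and $L(-g)\geq 0$ for every $g\in\cI_+(L)$, so $L$ is $\cI_+(L)$-positive.

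Finally, the support statement follows from Proposition \ref{zerosetsupport} applied to an \emph{arbitrary} representing measure $\nu$ of $L$. Once we know $L$ is $\cI_+(L)$-positive (a property of $L$ alone, established independently of $\nu$), Proposition \ref{zerosetsupport} yields $\supp\nu\subseteq \cZ(\cI_+(L))$ for every such $\nu$.

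The only step that requires any attention is the first one, namely passing from $L(f)=0$ to $\supp\mu\subseteq\cZ(f)$: this is the nonnegativity of $f$ combined with the fact that a nonnegative continuous function with integral zero against $\mu$ must vanish on $\supp\mu$. Everything else is a formal consequence: once individual generators $f\in\cN_+(L)$ are killed on $\supp\mu$, ideal elements are automatically killed, and the support statement is then a direct citation of Proposition \ref{zerosetsupport}.
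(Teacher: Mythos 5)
Your argument is correct and rests on the same key lemma as the paper's (Proposition \ref{zerosuppproposition}, that a nonnegative function with vanishing integral must vanish on the support); the paper just organizes it the other way around, applying that lemma directly to an arbitrary representing measure $\nu$ to get $\supp\nu\subseteq\cZ(\cI_+(L))$ first and then reading off annihilation, whereas you first establish annihilation via a fixed $\mu$ and then invoke Proposition \ref{zerosetsupport} for the support statement. Both are valid; your detour through Proposition \ref{zerosetsupport} is harmless but unnecessary, since once you know $\supp\nu\subseteq\cZ(f)$ for every generator $f\in\cN_+(L)$ and every representing measure $\nu$, both conclusions follow immediately.
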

\begin{proof}
    Let $\nu$ be an arbitrary representing measure of $L$. If $f\in \cN_+(L)$, then we have\, $L(f)=\int f(x)\, d\nu=0$. Since $f\in {\Pos}(\R^d)$, Proposition \ref{zerosuppproposition} applies and yields  ${\rm supp}\, \nu\subseteq \cZ(f)$. Hence ${\rm supp}\, \nu\subseteq \cZ(\cN_+(L)))=\cZ(\cI_+(L)).$
    In particular, the inclusion ${\rm supp}\, \nu\subseteq \cZ(\cI_+(L))$ implies that $L=L^\nu$ annihilates $\cI_+(L).$ 
    \end{proof}  

\section{The moment problem on compact semi-algebraic sets and the strict Positivstellensatz}\label{momentproblemstrictpos}
The solutions of one-dimensional moment problems  have been derived from
descriptions of nonnegative polynomials as weighted sums of squares. The counterparts of the latter in  the multidimensional case are  the so-called ``Positivstellens\"atze"  of real algebraic geometry. In general these results require denominators (see Theorem  \ref{krivinestengle}), so they do not yield reasonable criteria for solving  moment problems.  However, for {\it strictly positive} polynomials on {\it compact}  semi-algebraic sets $\cK({\sf f})$ there are {\it denominator free} Positivstellens\"atze (Theorems \ref{schmps} and \ref{archmedps}) which provides  solutions of  moment problems. Even more, it turns out that there is a close interplay between this type of  Positivstellens\"atze  and  moment problems on compact semi-algebraic sets, that is,  existence results for the moment problem can be derived from  Positivstellens\"atze  and vice versa.

We state the main technical steps of the proofs separately as  Propositions \ref{technkrivinestengle}--\ref{continuityLprop}.
Proposition \ref{continuityLprop} is also used  in a crucial manner in the proof of   Theorem 13.10 below.

Suppose that ${\sf f}=\{f_1,\dotsc,f_k\}$ is a finite subset of $\R_d[\ux]$. Let $B(\cK({\sf f}))$\index[sym]{BAKCfB@$B(\cK({\sf f}))$} denote  the algebra of all polynomials of $\R_d[\ux]$ which are bounded on the set $\cK({\sf f})$.
\begin{proposition}\label{technkrivinestengle} 
    Let $g\in B(\cK({\sf f}))$ and $\lambda >0$. If\, $\lambda^2>g(x)^2$\, for all $x\in  \cK({\sf f})$, then there exists a $p \in T({\sf f})$
    such that 
    \begin{align}\label{p2n2}
        g^{2n}  \preceq \lambda^{2n+2}p ~~~\text{for}~~~n\in \N.
    \end{align}
\end{proposition}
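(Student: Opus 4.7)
The plan is to reduce the proposition to a single application of the Krivine--Stengle Positivstellensatz and then to derive the family of inequalities \eqref{p2n2} by an induction on $n$. Since $\lambda^2 > g(x)^2$ on $\cK({\sf f})$, the polynomial $\lambda^2 - g^2$ is strictly positive on $\cK({\sf f})$, so Theorem \ref{krivinestengle}(i) furnishes elements $p, q \in T({\sf f})$ with
\begin{align*}
p(\lambda^2 - g^2) = 1 + q.
\end{align*}
I will show that this same $p$ works in \eqref{p2n2} for every $n \in \N$.

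Rewriting the identity as $\lambda^2 p = 1 + q + g^2 p$ gives at once the two building blocks
\begin{align*}
\lambda^2 p - 1 = q + g^2 p \in T({\sf f}) \qquad \text{and} \qquad \lambda^2 p - g^2 p = 1 + q \in T({\sf f}),
\end{align*}
using that $T({\sf f})$ is a preordering and that $g^2 p \in T({\sf f})$ as a product of elements of $T({\sf f})$. In the order language this reads $1 \preceq \lambda^2 p$ and $g^2 p \preceq \lambda^2 p$.

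The heart of the argument is to promote the second relation to $g^{2n} p \preceq \lambda^{2n} p$ by induction. For the induction step, assume $\lambda^{2(n-1)} p - g^{2(n-1)} p \in T({\sf f})$. Multiplying this element by $g^2 \in \sum\R_d[\ux]^2 \subseteq T({\sf f})$ (using that $T({\sf f})\cdot T({\sf f}) \subseteq T({\sf f})$) yields
\begin{align*}
\lambda^{2(n-1)} g^2 p - g^{2n} p \in T({\sf f}),
\end{align*}
while multiplying $\lambda^2 p - g^2 p \in T({\sf f})$ by the nonnegative scalar $\lambda^{2(n-1)}$ gives $\lambda^{2n} p - \lambda^{2(n-1)} g^2 p \in T({\sf f})$. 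Adding these two elements of $T({\sf f})$ produces $\lambda^{2n} p - g^{2n} p \in T({\sf f})$, completing the induction.

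Finally, to pass from $g^{2n} p \preceq \lambda^{2n} p$ to the desired $g^{2n} \preceq \lambda^{2n+2} p$, I multiply $\lambda^2 p - 1 \in T({\sf f})$ by $g^{2n}\in T({\sf f})$ to obtain $\lambda^2 g^{2n} p - g^{2n} \in T({\sf f})$, and multiply the established $\lambda^{2n} p - g^{2n} p \in T({\sf f})$ by $\lambda^2$ to obtain $\lambda^{2n+2} p - \lambda^2 g^{2n} p \in T({\sf f})$. The sum of these two elements is $\lambda^{2n+2} p - g^{2n}$, which therefore lies in $T({\sf f})$, i.e.\ $g^{2n} \preceq \lambda^{2n+2} p$. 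The only nontrivial ingredient is the initial appeal to Theorem \ref{krivinestengle}(i); once the identity $p(\lambda^2 - g^2) = 1 + q$ is in hand, the remainder is just careful bookkeeping in the preordering $T({\sf f})$, which requires no compactness of $\cK({\sf f})$.
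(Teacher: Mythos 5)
Your proposal is correct and follows essentially the same path as the paper's proof: a single application of the Krivine--Stengle Positivstellensatz to $\lambda^2 - g^2$ to produce $p(\lambda^2 - g^2) = 1 + q$, then an induction establishing $g^{2n}p \preceq \lambda^{2n}p$, and finally the inflation to $g^{2n} \preceq \lambda^{2n+2}p$ using that $g^{2n}(1+q) \in T({\sf f})$ and $g^{2n}(q + pg^2) \in T({\sf f})$. The only difference is presentational bookkeeping — you carry the explicit differences in $T({\sf f})$ at each step, while the paper phrases the same additions in the order notation $\preceq$.
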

\begin{proof} 
    By the Krivine--Stengle Positivstellensatz (Theorem \ref{krivinestengle}(i)), applied to the positive polynomial $\lambda^2-g^2$ on  $\cK({\sf f})$, there exist polynomials
    $p,q \in T({\sf f})$ such that
    \begin{align}\label{stengle}
        p(\lambda^2-g^2) =1 +q.
    \end{align}
    Since $q \in T({\sf f})$ and $T({\sf f})$ is a quadratic module, $g^{2n}(1+q) \in T({\sf f})$ for $n \in \N_0$. Therefore, using (\ref{stengle}) we conclude  that
    \begin{align*}g^{2n+2}p = g^{2n} \lambda^2 p-g^{2n}(1+q) \preceq g^{2n} \lambda^2 p.\end{align*} By induction it follows that 
    \begin{align}\label{p2n1}
        g^{2n} p \preceq \lambda^{2n} p.
    \end{align}
    Since $g^{2n}(q+pg^2) \in T({\sf f})$, using first (\ref{stengle}) and then  (\ref{p2n1}) we derive
    \begin{align*}
        \hspace{0,7cm}g^{2n} \preceq g^{2n} + g^{2n}(q+pg^2) = g^{2n}(1+q+pg^2)=g^{2n}\lambda^2 p \preceq \lambda^{2n+2}p\,. \hspace{1,1cm} \Box
    \end{align*}
\end{proof}

\begin{proposition}\label{prearchcom}
    If the set $\cK({\sf f})$ is compact, then the 
    associated 
    preordering $T({\sf f})$ is Archimedean.
\end{proposition}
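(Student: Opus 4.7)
By Corollary \ref{archirux}, Archimedeanness of $T({\sf f})$ is equivalent to the existence, for each $k=1,\dotsc,d$, of some $\lambda_k>0$ with $\lambda_k - x_k^2 \in T({\sf f})$, which by Lemma \ref{boundedele1} and Lemma \ref{boundedele2}(ii) is in turn equivalent to showing each generator $x_k$ lies in the bounded subalgebra ${\sA}_b(T({\sf f}))$. So the first move is to carry out this reduction to the coordinate generators.

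The second move uses compactness to feed Proposition \ref{technkrivinestengle}. Since $\cK({\sf f})$ is compact, each coordinate $x_k$ is bounded on $\cK({\sf f})$, so one can choose $\lambda_k>0$ with $\lambda_k^2 > x_k^2$ pointwise on $\cK({\sf f})$. Proposition \ref{technkrivinestengle}, applied with $g=x_k$ and $\lambda=\lambda_k$, then furnishes a \emph{single} $p_k\in T({\sf f})$ such that
\begin{align*}
x_k^{2n} \preceq \lambda_k^{2n+2}\, p_k \quad \text{for every } n\in\N_0.
\end{align*}
Taking $n=0$ yields $\lambda_k^2 p_k - 1 \in T({\sf f})$; taking $n=1$ yields $\lambda_k^4 p_k - x_k^2 \in T({\sf f})$, a bound on $x_k^2$ inside the preordering — but a bound by the \emph{polynomial} $\lambda_k^4 p_k$ rather than by a scalar.

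The hard step, and the real content of the proposition, is precisely this last gap: one must upgrade the polynomial bound $x_k^2 \preceq \lambda_k^4 p_k$ to a constant bound $\mu_k - x_k^2 \in T({\sf f})$. My plan is to iterate: since $p_k$ is itself a polynomial on the compact set $\cK({\sf f})$ it is bounded there, so Proposition \ref{technkrivinestengle} applies to $p_k$ as well, and one combines the resulting estimates using (i) the closure of $T({\sf f})$ under multiplication (it is a preordering, so inequalities $a\preceq b$ with $a,b\in T({\sf f})$ can be squared to $a^2\preceq b^2$, etc.) and (ii) the fact that ${\sA}_b(T({\sf f}))$ is a subalgebra by Lemma \ref{boundedele2}(i), which handles the bookkeeping. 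The crucial leverage is the \emph{uniformity in $n$} built into Proposition \ref{technkrivinestengle} — the same $p_k$ works for every $n\in\N_0$ — which is what lets the iteration be closed off rather than regressing indefinitely. Once each $x_k$ is placed inside ${\sA}_b(T({\sf f}))$, the subalgebra property promotes this to ${\sA}_b(T({\sf f}))=\R_d[\ux]$, i.e.\ $T({\sf f})$ is Archimedean.
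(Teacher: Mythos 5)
You correctly identify the crux of the proposition: Proposition \ref{technkrivinestengle} only yields a bound of $x_k^2$ by the \emph{polynomial} $\lambda_k^4 p_k$, and the work is to upgrade that to a scalar bound. Unfortunately, the iteration you sketch does not close. If you apply Proposition \ref{technkrivinestengle} to $p_k$ you obtain some $q_k \in T({\sf f})$ with $p_k^{2n} \preceq \mu_k^{2n+2} q_k$, and you are left needing to bound $q_k$ — a fresh polynomial, a priori involving all the variables, not just $x_k$. Nothing forces this regress to terminate, and the uniformity in $n$ does not help at this point: with your choice $g=x_k$, the powers $x_k^{2n}$ never dominate the other variables, so no relation of the form $p_k \preceq (\textrm{scalar})\cdot x_k^{2m}$ can hold, and there is no value of $n$ at which the first estimate can be fed back into itself.

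The paper's proof closes this exact gap by choosing $g$ more cleverly: $g(x)=(1+x_1^2)\cdots(1+x_d^2)$. This $g$ has the crucial property (the estimate \eqref{pk}) that $\pm 2x^\alpha \preceq g^k$ whenever $|\alpha|\leq k$, so \emph{any} fixed polynomial $p$ is dominated in the $\preceq$-order by a scalar multiple of some power $g^k$. Applying Proposition \ref{technkrivinestengle} to this $g$, choosing $k$ and $c>0$ with $p \preceq 2cg^k$, and then specializing the uniform bound $g^{2n}\preceq\lambda^{2n+2}p$ at precisely $n=k$ yields $g^{2k}\preceq 2c\lambda^{2k+2}g^k$, i.e.\ a quadratic inequality in $g^k$ alone; completing the square gives $(g^k-\lambda^{2k+2}c)^2 \preceq (\lambda^{2k+2}c)^2$ and hence $g^k \in {\sA}_b(T({\sf f}))$ by Lemma \ref{boundedele1}. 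Since $\pm x_j \preceq g^k$, each $x_j$ then lies in ${\sA}_b(T({\sf f}))$ and the proof finishes via Lemma \ref{boundedele2}(ii) as you intended. So the uniformity in $n$ is indeed essential, as you suspected, but it is effective only in tandem with a $g$ whose powers eventually dominate the bounding polynomial $p$ — that is the missing idea.
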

\begin{proof} Put $g(x):=(1+x_1^2)\cdots(1+x_d^2)$.
    Since $g$ is bounded on the compact set $\cK({\sf f})$, we have $\lambda^2 >g(x)^2$ on  $\cK({\sf f})$ for some $\lambda>0$. Therefore, by Proposition \ref{technkrivinestengle} there exists a $p \in T({\sf f})$ such that (\ref{p2n2}) holds.  
    
    Further, for any multiindex $\alpha\in \N_0^d$, $|\alpha| \leq k$, $k \in \N$, we obtain
    \begin{align}\label{pk}
        \pm 2x^\alpha \preceq x^{2\alpha} + 1 \preceq \sum_{|\beta| \leq k} x^{2\beta} = g^k.
    \end{align}
    Hence there exist numbers $c >0$ and $k \in \N$ such that $p
    \preceq 2c g^k$. Combining the latter with $g^{2n}  \preceq \lambda^{2n+2}p$ by~(\ref{p2n2}), we get
    $g^{2k} \preceq \lambda^{2k+2} 2c g^k$ and so
    \begin{align*}(g^k{-}\lambda^{2k+2}c)^2 \preceq (\lambda^{2k+2}c)^2{\cdot}1.\end{align*} Hence, by
    Lemma \ref{boundedele1}, $g^k{-}\lambda^{2k+2}c \in {\sA}_b(T({\sf f}))$ and so  $g^k \in {\sA}_b(T({\sf f}))$, where ${\sA}:=\R_d[\ux]$. Since $\pm x_j \preceq g^k$ by
    (\ref{pk}) and $g^k \in {\sA}_b(T({\sf f}))$, we obtain $x_j \in {\sA}_b(T({\sf f}))$ for $j=1,{\cdots},d$. Now from Lemma \ref{boundedele2}(ii) it follows that ${\sA}_b(T({\sf f}))=
    {\sA}$. This  means that $T({\sf f})$ is Archimedean.
    \end{proof}  

\begin{proposition}\label{continuityLprop}
    Suppose that $L$ is a\, $T({\sf f})$-positive linear functional on $\R_d[\ux]$. 
    \begin{itemize}
        \item[\em (i)]\,\,\,  If\, $g\in B(\cK({\sf f}))$ and $\|g \|_\infty$ denotes the  supremum of  $g$ on $\cK({\sf f}),$  then
        \begin{align}\label{conitnuityL}
            |L(g)|\leq L(1) ~\|g\|_\infty .
        \end{align}
        \item[\em (ii)]~ If\, $g\in B(\cK({\sf f}))$ and $g(x)\geq 0$ for $x\in \cK({\sf f})$, then $L(g)\geq	 0$.
    \end{itemize}
\end{proposition}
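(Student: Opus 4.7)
The central tool is Proposition \ref{technkrivinestengle}: given $g\in B(\cK({\sf f}))$ and any $\lambda>\|g\|_\infty$, we have $\lambda^2>g(x)^2$ on $\cK({\sf f})$, so there exists a fixed $p\in T({\sf f})$ (depending on $\lambda$ but not on $n$) with $\lambda^{2n+2}p-g^{2n}\in T({\sf f})$ for every $n\in\N$. Since $L$ is $T({\sf f})$-positive, evaluating gives the master inequality
\begin{align*}
 L(g^{2n})\leq \lambda^{2n+2}L(p),\qquad n\in\N.
\end{align*}
The task of part (i) is to bootstrap this moment bound into a linear bound on $|L(g)|$, and part (ii) will be a quick corollary of (i).

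For (i), I would apply iterated Cauchy--Schwarz. Because $\sum \R_d[\ux]^2\subseteq T({\sf f})$, the functional $L$ is positive on squares, hence $|L(fh)|^2\leq L(f^2)L(h^2)$ holds for all $f,h\in\R_d[\ux]$. Taking $f=1,\ h=g^{2^{j}}$ and iterating yields by induction on $k\in\N$ the Lyapunov-type inequality
\begin{align*}
 |L(g)|^{2^k}\leq L(1)^{2^k-1}\,L\!\left(g^{2^k}\right).
\end{align*}
Combining with the master inequality applied with $2n=2^k$ gives
\begin{align*}
 |L(g)|^{2^k}\leq L(1)^{2^k-1}\,\lambda^{2^k+2}\,L(p),
\end{align*}
so after extracting $2^k$-th roots,
\begin{align*}
 |L(g)|\leq L(1)^{1-2^{-k}}\,\lambda^{1+2\cdot 2^{-k}}\,L(p)^{2^{-k}}.
\end{align*}
Letting $k\to\infty$, the exponents converge to $1,1,0$, so the right-hand side tends to $L(1)\,\lambda$. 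Since $\lambda>\|g\|_\infty$ was arbitrary, this gives $|L(g)|\leq L(1)\|g\|_\infty$.

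For (ii), let $M:=\|g\|_\infty$. If $g(x)\geq 0$ on $\cK({\sf f})$, then $M-g$ is bounded on $\cK({\sf f})$ with $\|M-g\|_\infty\leq M$. Applying (i) to $M-g$ gives
\begin{align*}
 \bigl|L(1)M-L(g)\bigr|=|L(M-g)|\leq L(1)\,M,
\end{align*}
which forces $L(g)\geq 0$.

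The main technical point is the passage to the limit, which requires checking the degenerate cases: if $L(1)=0$, then Cauchy--Schwarz already gives $|L(g)|^2\leq L(1)L(g^2)=0$, so both sides vanish; if $L(p)=0$, the master inequality forces $L(g^{2n})=0$, and again Cauchy--Schwarz yields $L(g)=0$; and if $\|g\|_\infty=0$, one can let $\lambda\downarrow 0$ in the master inequality to conclude $L(g^2)=0$. Outside these cases, all three factors $L(1)^{1-2^{-k}}$, $\lambda^{1+2^{1-k}}$, $L(p)^{2^{-k}}$ are strictly positive and the asymptotic analysis is routine. Hence the whole argument is really just Proposition \ref{technkrivinestengle} plus iterated Cauchy--Schwarz, and no further real-algebraic machinery is needed.
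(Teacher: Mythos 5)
Your proof is correct, and in part (i) it takes a genuinely different route from the paper. Both arguments hinge on Proposition~\ref{technkrivinestengle}, which gives a fixed $p\in T({\sf f})$ with $L(g^{2n})\leq\lambda^{2n+2}L(p)$ for all $n$, for each $\lambda>\|g\|_\infty$. The paper then sets $s_n:=L(g^n)$, invokes Hamburger's theorem to produce a representing measure $\nu$ on $\R$ for this one-dimensional sequence, uses the same moment bound to localize $\supp\nu\subseteq[-\lambda,\lambda]$, and finishes with a single Cauchy--Schwarz. You avoid the detour through the one-dimensional moment problem entirely by iterating Cauchy--Schwarz to get the Lyapunov inequality $|L(g)|^{2^k}\leq L(1)^{2^k-1}L(g^{2^k})$, feeding in the moment bound at level $n=2^{k-1}$, and letting $k\to\infty$. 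The induction is sound, and the asymptotics of the exponents $1-2^{-k}$, $1+2^{1-k}$, $2^{-k}$ give exactly the bound $L(1)\lambda$, after which $\lambda\downarrow\|g\|_\infty$ closes the argument. This is purely algebraic and more elementary than the paper's proof (no Hamburger, no measure theory, no support-localization reasoning), which is a real gain. What the paper's approach buys is conceptual uniformity: the same measure-localization trick is reused repeatedly elsewhere (e.g.\ Propositions~\ref{locquader} and~\ref{cqfimpliessuppckf}), so the author gets Proposition~\ref{continuityLprop} essentially for free from machinery already on the table.

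Two small remarks. Your handling of the degenerate case $\|g\|_\infty=0$ is slightly imprecise as written: you cannot simply ``let $\lambda\downarrow 0$ in the master inequality,'' since the polynomial $p=p_\lambda$ changes with $\lambda$ and $L(p_\lambda)$ is not a priori bounded. However, this case needs no separate treatment at all: for each fixed $\lambda>0$ you already concluded $|L(g)|\leq L(1)\lambda$ (with the $L(1)=0$ and $L(p_\lambda)=0$ subcases dispatched as you note), and then $\lambda\to0^+$ gives $L(g)=0$. For part (ii), your use of $M-g$ rather than the paper's $M-2g$ is equally valid; the paper's choice makes $\|M-2g\|_\infty=M$ an exact equality, while yours gives only $\|M-g\|_\infty\leq M$, but the inequality chain works either way.
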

\begin{proof}
    (i):  Fix $\varepsilon >0$ and put $\lambda:= \parallel g \parallel_\infty +\varepsilon$. 
    We define a real sequence $s=(s_n)_{n\in \N_0}$ by  $s_n:=L(g^n)$. Then $L_s(q(y))=L(q(g))$ for  $q\in \R[y]$. For any $p\in \R[y]$, we have $p(g)^2\in \sum \R_d[\ux]^2\subseteq T({\sf f})$ and hence $L_s(p(y)^2)=L(p(g)^2)\geq 0$, since $L$ is $T({\sf f})$-positive.
    Thus, by  Hamburger's theorem 3.8, there exists a Radon measure $\nu$ on $\R$ such that $s_n= \int_\R t^n d\nu(t)$, $n \in \N_0$. 
    
    For $\gamma >\lambda$ let $\chi_\gamma$ denote the characteristic function of the set $(-\infty,-\gamma]\cup[\gamma,+\infty)$. Since $\lambda^2-g(x)^2>0$ on $\cK({\sf f})$,  we have $g^{2n} \preceq \lambda^{2n+2}p$\, by equation (\ref{p2n2}) in Proposition \ref{technkrivinestengle}. Using the $T({\sf f})$-positivity of $L$  we derive
    \begin{align}\label{gamma2nlp}
        \gamma^{2n} \int_\R \chi_\gamma(t) ~d\nu(t) \leq \int_\R t^{2n} d\nu(t) =s_{2n}= L(g^{2n}) \leq \lambda^{2n+2} L(p)
    \end{align}
    for all $n\in \N$. Since $\gamma >\lambda$, (\ref{gamma2nlp}) implies that $\int_\R \chi_\gamma(t)~d\nu(t) =0$. Therefore, ${\rm supp}~\nu \subseteq [-\lambda,\lambda]$. (The preceding argument has been already used in the proof of Proposition \ref{locquader} to obtain a similar conclusion.) Therefore, applying the Cauchy--Schwarz inequality for $L$ we derive
    \begin{align*}
        |L(g)|^2 &\leq L(1)L(g^2) 
        = L(1) s_2=
        L(1)\int_{-\lambda}^\lambda~ t^2 ~d\nu(t)\\& \leq L(1)\nu(\R)\lambda^2 = L(1)^2\lambda^2=L(1)^2(\parallel g \parallel_\infty + \varepsilon)^2.
    \end{align*}
    Letting $\varepsilon \to +0$, we get\, $|L(g)| \leq L(1)\parallel g \parallel_\infty$.

    (ii): Since $g\geq 0$ on $\cK({\sf f})$, we clearly have\, $\|\,1\cdot \|g\|_\infty-2\,g\|_\infty = \|g\|_\infty.$ Using this equality and (\ref{conitnuityL}) we conclude that
    \begin{align*}
        L(1) \|g\|_\infty - 2\,L(g)=L( 1 \cdot\|g\|_\infty - 2\,g) \leq L(1)\| 1\cdot \|g\|_\infty -2\,g\|_\infty =L(1)\|g\|_\infty ,
    \end{align*}
    which in turn implies that\, $L(g)\geq 0$.
    \end{proof}  

The following theorem is the {\it  strict Positivstellensatz}\index{Positivstellensatz! strict} for compact basic closed semi-algebraic sets 
$\cK({\sf f}).$
\begin{theorem}\label{schmps}
    Let ${\sf f}=\{f_1,\dotsc,f_k\}$ be a finite subset of $\R_d[\ux]$ and let\, $h\in \R[x]$. If the set $\cK({\sf f})$ is compact and $ h(x) > 0$ for all $x \in \cK({\sf f})$, then $h \in T({\sf f})$.
\end{theorem}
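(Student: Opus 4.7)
The plan is to argue by contradiction using the separation theorem (Proposition \ref{eidelheit}) for Archimedean cones, combined with the continuity-type bound on $T({\sf f})$-positive functionals furnished by Proposition \ref{continuityLprop}(ii). All the serious technical work has already been done: Proposition \ref{prearchcom} gives that $T({\sf f})$ is Archimedean whenever $\cK({\sf f})$ is compact, and Proposition \ref{continuityLprop} shows that $T({\sf f})$-positive linear functionals respect pointwise nonnegativity on $\cK({\sf f})$ for polynomials that are bounded there.

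Concretely, suppose for contradiction that $h\notin T({\sf f})$. Since $\cK({\sf f})$ is compact, Proposition \ref{prearchcom} tells us that $T({\sf f})$ is an Archimedean unital cone of $\sA:=\R_d[\ux]$. Therefore Proposition \ref{eidelheit} applies with $C=T({\sf f})$ and $a_0=h$: there is a $T({\sf f})$-positive linear functional $\varphi$ on $\R_d[\ux]$ satisfying
\begin{align*}
\varphi(1)=1 \quad \text{and} \quad \varphi(h)\leq 0.
\end{align*}

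To reach a contradiction, I use the strict positivity of $h$. Since $h$ is continuous and $\cK({\sf f})$ is compact, $h$ attains a positive minimum on $\cK({\sf f})$; pick $\varepsilon>0$ with $h(x)\geq\varepsilon$ for all $x\in\cK({\sf f})$. Then $g:=h-\varepsilon$ is bounded on the compact set $\cK({\sf f})$, so $g\in B(\cK({\sf f}))$, and $g(x)\geq 0$ on $\cK({\sf f})$. Proposition \ref{continuityLprop}(ii), applied to the $T({\sf f})$-positive functional $\varphi$, yields $\varphi(g)\geq 0$, that is,
\begin{align*}
\varphi(h)\geq \varepsilon\,\varphi(1)=\varepsilon>0,
\end{align*}
contradicting $\varphi(h)\leq 0$. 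Hence $h\in T({\sf f})$.

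The only subtle point in this plan is making sure Proposition \ref{continuityLprop}(ii) is applicable, which just requires observing that polynomials are automatically bounded on the compact set $\cK({\sf f})$ and that the shift by $\varepsilon$ still leaves the polynomial bounded and nonnegative on $\cK({\sf f})$; no further estimate is needed. All genuine difficulty has been absorbed into the preparatory Propositions \ref{technkrivinestengle}--\ref{continuityLprop}, which in turn rest on the Krivine--Stengle Positivstellensatz and Hamburger's theorem.
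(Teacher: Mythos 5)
Your proof is correct and follows essentially the same path as the paper's: Proposition \ref{prearchcom} gives Archimedeanness of $T({\sf f})$, Proposition \ref{eidelheit} supplies a $T({\sf f})$-positive functional $\varphi$ with $\varphi(1)=1$ and $\varphi(h)\leq 0$, and the continuity machinery of Proposition \ref{continuityLprop} produces the contradiction. The only difference is that you invoke Proposition \ref{continuityLprop}(ii) directly on $g=h-\varepsilon$, while the paper works with Proposition \ref{continuityLprop}(i) and reproduces the Weierstrass approximation argument inline, approximating $\sqrt{h-\delta}$ by polynomials $p_n$ and showing $L(p_n^2 - h + \delta)\to 0$; since part (ii) is itself derived from part (i), this is a modest streamlining rather than a different method, and your version is arguably tidier for reusing the already-proved statement exactly where it applies.
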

\begin{proof} Assume to the contrary that $h$ is not in $T({\sf f})$. By Proposition \ref{prearchcom}, $T({\sf f})$ is Archimedean. Therefore, by Proposition \ref{eidelheit}, there exists a $T({\sf f})$-positive linear functional $L$ on ${\sA}$ such that $L(1)=1$ and $L(h) \leq 0$. 
    Since $h >0$ on the compact set $\cK({\sf f})$, there is a positive number $\delta$ such that $h(x)-\delta> 0$ for all $x\in\cK({\sf f})$. We extend the continuous function $\sqrt{h(x)-\delta}$\,  on $\cK({\sf f})$ to a continuous function on some compact $d$-dimensional interval containing $\cK({\sf f})$. Again by
    the classical Weierstrass theorem,\,  $\sqrt{h(x)-\delta}$\, is the uniform
    limit on $\cK({\sf f})$  of a sequence $(p_n)$ of polynomials $p_n \in \R_d[\ux]$. Then\, $p_n^2-h+\delta\to 0$ uniformly on $\cK({\sf f})$, that is, $\lim_{n} \parallel p_n^2 -h +\delta \parallel_\infty =0$. Recall that $ B(\cK({\sf f}))=\R_d[\ux]$, since $\cK({\sf f})$ is compact. Hence\, $\lim_{n} L(p_n^2-h+ \delta)=0$\, by the inequality (\ref{conitnuityL})  in Proposition \ref{continuityLprop}(i). But, since $L(p_n^2) \geq 0$, $L(h)\leq 0$, and $L(1)=1$, we have\, $L(p_n^2-h+\delta)\geq \delta >0$ which is the desired contradiction. This completes the proof of the theorem.
    \end{proof}  

The next result  
gives a solution of the $\cK({\sf f})$-moment problem for compact basic closed semi-algebraic sets.\index{Moment problem! for compact semi-algebraic sets}
\begin{theorem}\label{mpschm}
    Let  ${\sf f}=\{f_1,\dotsc,f_k\}$ be a finite subset of $\R_d[\ux]$. If  the set $\cK({\sf f})$ is compact, then each $T({\sf f})$-positive linear functional $L$ on $\R_d[\ux]$ is a $\cK({\sf f})$-moment functional.
\end{theorem}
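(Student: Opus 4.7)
The plan is to reduce the assertion to Haviland's Theorem 1.12, which says that a linear functional $L$ on $\R_d[\ux]$ is a $K$-moment functional for a closed set $K\subseteq\R^d$ if and only if $L(g)\geq 0$ for every $g\in{\Pos}(K)$. Applied with $K=\cK({\sf f})$, this reduces the theorem to proving the following: whenever $g\in\R_d[\ux]$ satisfies $g(x)\geq 0$ for all $x\in\cK({\sf f})$, we have $L(g)\geq 0$.

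The key observation is that compactness of $\cK({\sf f})$ forces $B(\cK({\sf f}))=\R_d[\ux]$, since every polynomial is bounded on a compact set. Thus any $g\in{\Pos}(\cK({\sf f}))$ automatically lies in $B(\cK({\sf f}))$ and is nonnegative on $\cK({\sf f})$. Proposition \ref{continuityLprop}(ii) then applies verbatim to the $T({\sf f})$-positive functional $L$ and yields $L(g)\geq 0$.

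Combining these two ingredients, $L$ is nonnegative on ${\Pos}(\cK({\sf f}))$, so Haviland's theorem produces a Radon measure $\mu$ with ${\rm supp}\,\mu\subseteq\cK({\sf f})$ representing $L$; that is, $L$ is a $\cK({\sf f})$-moment functional. The real work has already been carried out in the preparatory propositions: Proposition \ref{prearchcom} gives that $T({\sf f})$ is Archimedean (making $L$ sensitive to sup-norm control on $\cK({\sf f})$), and Proposition \ref{continuityLprop} converts this into the continuity estimate $|L(g)|\leq L(1)\|g\|_\infty$ and its positivity consequence. There is no remaining obstacle beyond invoking Haviland; the theorem is a clean corollary of Proposition \ref{continuityLprop}(ii) together with the identity $B(\cK({\sf f}))=\R_d[\ux]$ on compact sets.
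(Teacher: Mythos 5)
Your proposal is correct and follows exactly the paper's own argument: note that compactness gives $B(\cK({\sf f}))=\R_d[\ux]$, apply Proposition \ref{continuityLprop}(ii) to get $L\geq 0$ on ${\Pos}(\cK({\sf f}))$, and invoke Haviland's Theorem 1.12. Nothing is missing and no alternative route is taken.
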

\begin{proof}
    Since $\cK({\sf f})$ is compact, $ B(\cK({\sf f}))=\R_d[\ux]$. Therefore, it suffices to combine Proposition \ref{continuityLprop}(ii) with Haviland's Theorem 1.12.
    \end{proof}  

\begin{remark} Theorem \ref{mpschm} was obtained from Proposition \ref{continuityLprop}(ii) and Haviland's Theorem 1.12. Alternatively, it can derived from Proposition \ref{continuityLprop}(i) combined with  Riesz' representation theorem. Let us sketch this proof. By (\ref{conitnuityL}), the  functional $L$ on $\R_d[\ux]$ is $\|\cdot\|_\infty$- continuous.  Extending $L$ to $C(\cK({\sf f}))$ by the Hahn--Banach theorem and applying   Riesz' representation theorem for continuous linear functionals,  $L$ is given by a signed Radon measure on $\cK({\sf f})$. Setting $g=1$ in (\ref{conitnuityL}),  it follows that $L$, hence the extended functional, has  the norm $L(1)$. It is not difficult to show that this implies that the representing measure is positive. $\hfill \circ$
\end{remark}

The shortest path to Theorems \ref{schmps} and \ref{mpschm} is probably  to use Proposition \ref{continuityLprop} as we have done. However, in order to emphasize the interaction between both theorems  and so in fact  between the moment problem and real algebraic geometry we now derive each of these theorems from the other.
\smallskip

{\it Proof of Theorem \ref{mpschm} (assuming Theorem \ref{schmps}):}\\
Let $h\in \R_d[\ux]$. If $h(x)>0$ on $\cK({\sf f})$, then $h\in T({\sf f})$ by  Theorem \ref{schmps} and so $L(h)\geq 0$ by the assumption. Therefore $L$  is a $\cK({\sf f})$-moment functional by the implication (ii)$\to$(iv) of Haviland's Theorem 1.12.
$\hfill$   $\Box$
\smallskip

{\it Proof of Theorem \ref{schmps} (assuming Theorem \ref{mpschm} and Proposition \ref{prearchcom}):}\\
Suppose   $h\in \R_d[\ux]$ and $h(x)>0$ on $\cK({\sf f})$. Assume to the contrary that $h\notin T({\sf f})$. Since the preordering  $T({\sf f})$ is Archimedean by Proposition \ref{prearchcom},  Proposition \ref{eidelheit} applies, so  there  is a $T({\sf f})$-positive linear functional $L$ on $\R_d[\ux]$ such that $L(1)=1$ and $L(h)\leq 0$. By Theorem \ref{mpschm}, $L$ is a $\cK({\sf f})$-moment functional, that is, there is a measure $\mu\in M_+(\cK({\sf f}))$ such that $L(p)=\int_{\cK({\sf f})} p\, d\mu$  for $p\in \R_d[\ux]$. But $L(1)=\mu(\cK({\sf f}))=1$ and $h>0$ on $\cK({\sf f})$ imply that $L(h)>0$. This is a contradiction, since $L(h)\leq 0$. \hfill $\Box$

\medskip
The preordering $T({\sf f})$\, was defined as the sum of  sets $f_1^{e_1}\cdots f_k^{e_k}\cdot\sum \R_d[\ux]^2$. It is natural to ask whether or not  all such sets with mixed products $f_1^{e_1}\cdots f_k^{e_k}$ are really needed. To formulate the corresponding result we put $l_k:=2^{k-1}$ and let
$g_1,\dotsc,g_{l_k}$ denote the first $l_k$ polynomials of the following row of mixed  products:
\begin{align*}
    f_1,\dotsc,f_k,f_1f_2,f_1f_3,\dotsc,f_1f_k,\dotsc,f_{k-1}f_k,f_1f_2f_3,\dotsc, f_{k-2}f_{k-1}f_k,\dotsc,f_1f_2\dots,f_k.
\end{align*}
Let $Q({\sf g})$ denote the quadratic module generated by $g_1,\dotsc,g_{l_k}$, that is,
\begin{align*}Q({\sf g}):=\sum\R_d[\ux]^2+ g_1\sum\R_d[\ux]^2+\dots+g_{l_k}\sum\R_d[\ux]^2.\end{align*} 
The following  result of T. Jacobi and A. Prestel [JP] sharpens Theorem \ref{schmps}. 
\begin{theorem}\label{jacobiprestel}
    If the  set $\cK({\sf f})$ is compact and $h\in \R_d[\ux]$ satisfies $h(x)>0$ for all $x\in \cK({\sf f})$, then $h\in Q({\sf g})$.
\end{theorem}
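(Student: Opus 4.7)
The strategy splits into two parts: first prove that the quadratic module $Q({\sf g})$ is itself Archimedean (despite being strictly smaller than $T({\sf f})$), and then invoke an Archimedean Positivstellensatz for quadratic modules (Theorem \ref{archpos}, i.e.\ Putinar's theorem, developed in the next section) to conclude. Note that Example \ref{archnoncompact} shows the smallest quadratic module $Q({\sf f})$ need not be Archimedean even when $\cK({\sf f})$ is compact, so the precise combinatorial choice of the first $l_k = 2^{k-1}$ mixed products is essential.

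\textbf{Step 1 (the substantive step): $Q({\sf g})$ is Archimedean.} By Proposition \ref{prearchcom} and Corollary \ref{archirux}, there exists $\lambda > 0$ with $\lambda - \sum_{j=1}^d x_j^2 \in T({\sf f})$, yielding a representation
\[\lambda - \sum_{j=1}^d x_j^2 = \sum_{e \in \{0,1\}^k} f_1^{e_1}\cdots f_k^{e_k}\, \tau_e, \qquad \tau_e \in \sum \R_d[\ux]^2.\]
By Corollary \ref{archirux}(ii), it suffices to rewrite this identity so that the right-hand side lies in $Q({\sf g})$. The threshold $l_k = 2^{k-1}$ is calibrated precisely so that for every ``missing'' product $f^e := f_1^{e_1}\cdots f_k^{e_k} \notin \{1,g_1,\dotsc,g_{l_k}\}$, the complementary product $f^{e^c}$ (with $e^c_i = 1 - e_i$) does belong to $\sf g$. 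Exploiting the identity $f^e \cdot f^{e^c} = f_1\cdots f_k$ together with the boundedness of $f_1\cdots f_k$ on the compact set $\cK({\sf f})$, one reduces each missing summand $f^e \tau_e$ to an expression in $Q({\sf g})$ via a Positivstellensatz-style estimate (of the kind provided by Proposition \ref{technkrivinestengle}) combined with sum-of-squares bookkeeping. This combinatorial reduction, carried out by induction on $k$ as in [JP], is the main obstacle: the identities must be chosen so that no single reduction introduces a new ``missing'' product of equal or larger complexity.

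\textbf{Step 2: from Archimedean-ness to the conclusion.} Observe first that $\cK(Q({\sf g})) = \cK({\sf f})$, because $\sf g$ contains each of $f_1,\dotsc,f_k$ (so $\cK(Q({\sf g})) \subseteq \cK({\sf f})$) and conversely every $g_j$, being a product of the $f_i$'s, is nonnegative on $\cK({\sf f})$. Once $Q({\sf g})$ is known to be Archimedean, the Archimedean Positivstellensatz (Theorem \ref{archpos}) applies and yields immediately that any $h \in \R_d[\ux]$ with $h(x) > 0$ for all $x \in \cK({\sf f}) = \cK(Q({\sf g}))$ belongs to $Q({\sf g})$. Alternatively, if one wishes to avoid the forward reference, one can argue as in the proof of Theorem \ref{schmps}: assume $h \notin Q({\sf g})$ and use Proposition \ref{eidelheit} to produce a $Q({\sf g})$-positive functional $L$ with $L(1) = 1$ and $L(h) \leq 0$, establish a continuity estimate $|L(g)| \leq L(1)\|g\|_\infty$ for $g$ bounded on $\cK({\sf f})$ (mimicking Proposition \ref{continuityLprop}, replacing the witness from $T({\sf f})$ by one from $Q({\sf g})$ that exists by Archimedean-ness), and use Weierstrass approximation of $\sqrt{h-\delta}$ to derive a contradiction.
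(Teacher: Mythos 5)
The paper itself does not prove this theorem --- it explicitly defers to [JP] --- so there is no internal argument to compare against. On its own terms: your high-level strategy (show $Q({\sf g})$ is Archimedean, then invoke the Archimedean Positivstellensatz of Theorem \ref{archpos}) is the correct reduction, and your Step 2 is sound, including the observation that $\cK(Q({\sf g})) = \cK({\sf f})$. The combinatorial observation in Step 1 is also correct: the first $l_k = 2^{k-1}$ products in the displayed row are exactly calibrated so that for every $e \in \{0,1\}^k$ at least one of $f^e$, $f^{e^c}$ lies in $\{1, g_1,\dotsc,g_{l_k}\}$. (All products of degree at most $\lfloor (k-1)/2 \rfloor$ are included; when $k$ is even, the lexicographic ordering places all degree-$k/2$ products containing $f_1$ first, and these are $\tfrac12\binom{k}{k/2}$ in number, one per complementary pair at that degree, so every pair is hit.)

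However, the core of Step 1 is not a proof: it is a deferral to ``induction on $k$ as in [JP].'' The mechanism you gesture at does not close by itself. Starting from a $T({\sf f})$-representation $\lambda - \sum_j x_j^2 = \sum_e f^e\tau_e$, knowing that $f^e f^{e^c} = f_1\cdots f_k$ is bounded on $\cK({\sf f})$ and that $f^{e^c} \in {\sf g}$ does not, via any identity available in this chapter (Lemma \ref{boundedele1}, Proposition \ref{technkrivinestengle}, or the preordering manipulations), convert a missing summand $f^e\tau_e$ into an element of $Q({\sf g})$ modulo the other terms. This is precisely where the Jacobi--Prestel argument becomes genuinely hard; their proof is not an elementary sum-of-squares rearrangement but runs through the theory of semiorderings and quadratic forms over valued real fields. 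So your proposal is a correct and useful reduction of Theorem \ref{jacobiprestel} to the Archimedean-ness of $Q({\sf g})$, with the substantial part still missing --- which puts it in essentially the same position as the paper's own deliberate choice to cite [JP] rather than prove the result.
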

We do not prove  Theorem \ref{jacobiprestel}; for a proof of this result we refer to [JP]. If we take Theorem \ref{jacobiprestel}  for granted and combine it  with Haviland's theorem 
1.12 we obtain the following corollary.
\begin{corollary}\label{corjacobiprestel}\index{Moment problem! for compact semi-algebraic sets}
    If  the set $\cK({\sf f})$ is compact and $L$ is a $Q({\sf g})$-positive linear functional on $\R_d[\ux]$, then $L$ is a $\cK({\sf f})$-moment functional.
\end{corollary}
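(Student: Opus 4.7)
The plan is to mimic the short derivation of Theorem \ref{mpschm} from Theorem \ref{schmps} given earlier in the section, with Theorem \ref{jacobiprestel} now playing the role of Theorem \ref{schmps}. Specifically, I will invoke Haviland's Theorem 1.12, whose implication (ii)$\to$(iv) reduces the claim to verifying that $L(h)\geq 0$ for every $h\in \R_d[\ux]$ which is \emph{strictly} positive on the compact set $\cK({\sf f})$.

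For such an $h$, the hypothesis that $\cK({\sf f})$ is compact together with Theorem \ref{jacobiprestel} immediately produces a representation $h\in Q({\sf g})$. Since $L$ is $Q({\sf g})$-positive by assumption, this gives $L(h)\geq 0$ at once. Feeding this back into Haviland's theorem yields that $L$ is a $\cK({\sf f})$-moment functional, completing the proof.

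There is essentially no obstacle here: the entire argument is the two-step chain
\begin{equation*}
h(x)>0 \text{ on } \cK({\sf f}) \;\Longrightarrow\; h\in Q({\sf g}) \;\Longrightarrow\; L(h)\geq 0,
\end{equation*}
followed by one invocation of Haviland's theorem. The only subtlety worth flagging is that we rely on the ``strictly positive'' formulation of Haviland's theorem (implication (ii)$\to$(iv) of Theorem 1.12); this is the same version already used in the parallel derivation of Theorem \ref{mpschm} from Theorem \ref{schmps} above, so it may be invoked without further comment. No $\varepsilon$-perturbation of $h$ is needed, since Haviland's theorem in this form already operates on strictly positive test polynomials.
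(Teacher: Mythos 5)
Your argument is correct and is exactly the combination the paper has in mind: apply Theorem \ref{jacobiprestel} to every polynomial strictly positive on the compact set $\cK({\sf f})$ to land it in $Q({\sf g})$, deduce $L(h)\geq 0$ from $Q({\sf g})$-positivity, and close via Haviland's implication (ii)$\to$(iv) of Theorem 1.12. This mirrors the paper's own one-line remark that the corollary follows by combining Theorem \ref{jacobiprestel} with Haviland's theorem, in the same way Theorem \ref{mpschm} was derived from Theorem \ref{schmps}.
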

We briefly discuss Theorem \ref{jacobiprestel}. If $k=1$, then  $Q({\sf f})=T({\sf f})$. However, for $k=2$,
\begin{align*}
    Q({\sf f})&=\sum \R_d[\ux]^2+f_1\sum \R_d[\ux]^2+f_2\sum \R_d[\ux]^2,
\end{align*}
so $Q({\sf f})$ differs from the preordering $T({\sf f})$ by the summand $f_1f_2\sum \R_d[\ux]^2$. If $k=3$, then
\begin{align*}
    Q({\sf f})
    =\sum \R_d[\ux]^2+&f_1\sum \R_d[\ux]^2+ f_2\sum \R_d[\ux]^2+f_3\sum \R_d[\ux]^2+f_1f_2\sum \R_d[\ux]^2 \,,
\end{align*} 
that is, the sets 
$g\sum \R_d[\ux]^2$ with $g=f_1f_3,f_2f_3,f_1f_2f_3$  do not enter into the definition of $Q({\sf f})$. For $k=4$,  no products of three or four generators appear in the definition of $Q({\sf f})$. For large $k$, only a small portion of mixed products occur in $Q({\sf f})$ and Theorem \ref{jacobiprestel} is an essential strengthening of Theorem \ref{schmps}.  
\smallskip

The next corollary 
characterizes in terms of moment functionals when a Radon measure on a compact semi-algebraic set  has a {\it bounded}   density\index{Moment functional! with bounded density} with respect to another Radon measure. A version for closed  sets is stated in Exercise 14.11 below.
\begin{corollary}\label{mpwithboundeddensity}
    Suppose that  the semi-algebraic set $\cK({\sf f})$ is compact. Let $\mu$ and $\nu$ be  finite Radon measures  on $\cK({\sf f})$ and let $L^\mu$  and $L^\nu$ be the corresponding moment functionals on $\R_d[\ux]$.
    There exists a function $\varphi \in L^\infty(\cK({\sf f}),\mu)$,  $\varphi(x)\geq 0$ $\mu$-a.e. on $\cK({\sf f})$, such that\, $d\nu=\varphi d\mu$\, if and only  if there is a constant $c> 0$ such that 
    \begin{align}\label{boundedesti}
        L^\nu(g)\leq cL^\mu(g)\quad \text{for}~~~ g\in T({\sf f}).
    \end{align}
\end{corollary}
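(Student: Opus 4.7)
The plan is to reduce the statement to Theorem \ref{mpschm} via the auxiliary functional $L := cL^\mu - L^\nu$, and then invoke the determinacy of measures on compact sets (Proposition \ref{detcomacpcase}) to identify the resulting representing measure with $c\mu - \nu$.

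\textbf{Easy direction.} Assume $d\nu = \varphi\, d\mu$ with $0 \le \varphi \le c$ holding $\mu$-a.e.\ on $\cK({\sf f})$. For any $g \in T({\sf f})$ we have $g \ge 0$ on $\cK({\sf f})$, and since both measures are supported in $\cK({\sf f})$,
\begin{align*}
L^\nu(g) = \int_{\cK({\sf f})} g\, \varphi\, d\mu \le c \int_{\cK({\sf f})} g\, d\mu = c\, L^\mu(g).
\end{align*}

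\textbf{Hard direction.} Suppose \eqref{boundedesti} holds. Define $L := c L^\mu - L^\nu$ on $\R_d[\ux]$. By hypothesis $L(g) \ge 0$ for all $g \in T({\sf f})$, so $L$ is a $T({\sf f})$-positive linear functional. Since $\cK({\sf f})$ is compact, Theorem \ref{mpschm} applies and yields a Radon measure $\sigma$ on $\cK({\sf f})$ with $L(p) = \int_{\cK({\sf f})} p\, d\sigma$ for every $p \in \R_d[\ux]$. Consequently, $c\mu - \nu$ (a finite signed Radon measure on the compact set $\cK({\sf f})$) and $\sigma$ share the same polynomial moments.

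\textbf{From equal moments to equal measures.} Here the main technical point is to promote the agreement of moments to agreement of measures, in spite of $c\mu - \nu$ being only a signed measure. Since $\cK({\sf f})$ is compact, the Weierstrass theorem shows that polynomials are uniformly dense in $C(\cK({\sf f}))$, so for every $h \in C(\cK({\sf f}))$ we may choose polynomials $p_n \to h$ uniformly on $\cK({\sf f})$ and pass to the limit in $\int p_n\, d(c\mu - \nu) = \int p_n\, d\sigma$ using the finiteness of total variation. This gives $\int h\, d(c\mu - \nu) = \int h\, d\sigma$ for all $h \in C(\cK({\sf f}))$, and then the Riesz representation theorem forces $c\mu - \nu = \sigma$ as signed Radon measures. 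In particular $c\mu - \nu \ge 0$, i.e., $\nu \le c\mu$ as measures on $\cK({\sf f})$.

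\textbf{Conclusion via Radon--Nikodym.} The inequality $\nu(A) \le c\, \mu(A)$ for every Borel set $A \subseteq \cK({\sf f})$ shows that $\nu \ll \mu$; by the Radon--Nikodym theorem there is a $\mu$-measurable $\varphi \ge 0$ with $d\nu = \varphi\, d\mu$. Applying $\nu(A) \le c\, \mu(A)$ to the sets $A_n := \{\varphi > c + 1/n\}$ yields $\mu(A_n) = 0$, hence $\varphi \le c$ $\mu$-a.e., so $\varphi \in L^\infty(\cK({\sf f}), \mu)$. I expect the only subtlety to be the density step for a signed measure; everything else is a direct assembly of Theorem \ref{mpschm}, Weierstrass, Riesz, and Radon--Nikodym.
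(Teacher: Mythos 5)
Your argument is correct, but it takes a slightly different route than the paper in the hard direction. You define $L = cL^\mu - L^\nu$, apply Theorem \ref{mpschm} to get a representing measure $\sigma$, and then directly compare the \emph{signed} measure $c\mu - \nu$ with $\sigma$ by approximating continuous functions uniformly with polynomials and invoking the Riesz representation theorem, concluding $c\mu - \nu = \sigma \ge 0$. The paper instead rearranges to stay within the world of \emph{positive} measures: it observes that $\sigma + \nu$ and $c\mu$ are two positive representing measures of the same moment functional $cL^\mu$, so the determinacy result for compactly supported measures (Proposition \ref{detcomacpcase}) gives $\sigma + \nu = c\mu$ at once. Both are valid; the paper's version is shorter because it reuses Proposition \ref{detcomacpcase} verbatim, whereas your version essentially re-proves the Weierstrass--Riesz step because $c\mu - \nu$ is not a priori positive and so Proposition \ref{detcomacpcase} cannot be cited directly. (Your opening paragraph announces an appeal to Proposition \ref{detcomacpcase}, but the executed proof does not in fact use it --- the two halves of your writeup are a bit out of sync, though the executed proof is correct.) One minor point in the easy direction: you need to say that $c$ is chosen as (at least) the essential supremum of $\varphi$, since the statement only assumes $\varphi \in L^\infty$, $\varphi \ge 0$; your ``with $0\le\varphi\le c$'' implicitly does this.
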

\begin{proof}
    Choosing  $c\geq \|\varphi\|_{ L^\infty(\cK({\sf f}),\mu)}$, the  necessity of (\ref{boundedesti})  is easily verified.

    To prove  the converse we assume that (\ref{boundedesti}) holds. Then, by  (\ref{boundedesti}),\,  $L:=cL^\mu-L^\nu$  is a  $T({\sf f})$-positive linear functional  on $\R_d[\ux]$ and hence  a $\cK({\sf f})$-moment functional by Theorem \ref{mpschm}. Let $\tau$ be a representing measure of $L$, that is, $L=L^\tau$. Then we have $L^\tau+ L^\nu=cL^\mu$. Hence both $\tau+\nu$ and $c\mu$ are representing measures of the $\cK({\sf f})$-moment functional $cL^\mu$. Since $\cK({\sf f})$ is compact, $c\mu$ is determinate by Proposition \ref{detcomacpcase}, so that $\tau+\nu=c\mu$. In particular, this implies that $\nu$ is absolutely continuous with respect to $\mu$. Therefore, by the Radon--Nikodym theorem A.3, $d\nu =\varphi d\mu$ for some function $\varphi \in L^1(\cK({\sf f}),\mu)$,  $\varphi(x)\geq 0$ $\mu$-a.e. on $\cK({\sf f})$. Since $\tau+\nu=c\mu$, for each Borel subset $M$ of $\cK({\sf f})$ we have 
    \begin{align*}
        \tau (M)=c\mu(M)-\nu(M)=\int_M  (c-\varphi(x))d\mu\geq 0 .
    \end{align*}
    Therefore, $c-\varphi(x)\geq 0$~ $\mu$-a.e., so that $\varphi\in L^\infty(\cK({\sf f}),\mu)$ and $\|\varphi\|_{ L^\infty(\cK({\sf f}),\mu)}\leq c.$
    \end{proof}
We close this section 
by restating  Theorems  \ref{schmps} and \ref{mpschm} in the special case of compact real algebraic sets.
\begin{corollary}\label{compactalgvar}
    Suppose that $\cI$ is an ideal of $\R_d[\ux]$ such that the real algebraic set\, $V:=\cZ(\cI)=\{x\in \R^d:f(x)=0~~\text{for}~ f\in \cI\}$ is compact.
    \begin{itemize}
        \item[\em (i)]\,\, If $h\in \R_d[\ux]$ satisfies $h(x)>0$ for all $x\in V$, then $h\in \sum \R_d[\ux]^2 +\cI$.
        \item[\em (ii)]\,\,  If $p\in\R_d[\ux]/ \cI$ and $p(x)>0$ for all $x\in V$, then $p\in \sum (\R_d[\ux]/ \cI)^2$.
        \item[\em (iii)]~\, If $q\in \R[V]\equiv\R_d[\ux]/ \hat{\cI}$ and $q(x)>0$ for all $x\in V$, then $q\in \sum \R[V]^2$.
        \item[\em (iv)]~  Each positive linear functional  on $\R_d[\ux]$ which annihilates $\cI$ is a $V$-moment functional.
    \end{itemize}
\end{corollary}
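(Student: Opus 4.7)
The overall plan is to realize $V$ as a basic closed semi-algebraic set $\cK({\sf f})$ for a suitable ${\sf f}$, identify the associated preordering with $\sum\R_d[\ux]^2+\cI$, and then invoke Theorems \ref{schmps} and \ref{mpschm}. Concretely, by Hilbert's basis theorem the ideal $\cI$ has finitely many generators $h_1,\dotsc,h_m$; set ${\sf f}=\{h_1,-h_1,\dotsc,h_m,-h_m\}$. Then $\cK({\sf f})=\cZ(\cI)=V$ is compact, and Example \ref{quadraticideal} (applied with an empty collection ${\sf g}$, so that $T({\sf g})=\sum\R_d[\ux]^2$) gives the crucial identification $T({\sf f})=\sum\R_d[\ux]^2+\cI$.

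For (i), since $h>0$ on the compact set $\cK({\sf f})$, Theorem \ref{schmps} yields $h\in T({\sf f})=\sum\R_d[\ux]^2+\cI$. For (ii), I would choose any lift $\tilde p\in\R_d[\ux]$ of $p\in\R_d[\ux]/\cI$; because elements of $\cI$ vanish on $V$, the value of $\tilde p$ on $V$ depends only on its class $p$, so $\tilde p(x)>0$ on $V$. Part (i) then provides a decomposition $\tilde p=\sigma+q$ with $\sigma\in\sum\R_d[\ux]^2$ and $q\in\cI$, and reducing modulo $\cI$ gives $p\in\sum(\R_d[\ux]/\cI)^2$. For (iii), observe that $\cI\subseteq\hat\cI$ and every element of $\hat\cI$ vanishes on $V$, so $\cZ(\hat\cI)=V$ is still compact; applying (ii) with $\cI$ replaced by $\hat\cI$ (using finite generators of $\hat\cI$, again by Hilbert's basis theorem) yields the claim. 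For (iv), if $L$ is a positive linear functional annihilating $\cI$, then for every $\sigma\in\sum\R_d[\ux]^2$ and $q\in\cI$ we have $L(\sigma+q)=L(\sigma)\geq 0$, so $L$ is $T({\sf f})$-positive; Theorem \ref{mpschm} then concludes that $L$ is a $\cK({\sf f})=V$-moment functional.

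There is essentially no substantive obstacle beyond the bookkeeping of translating the algebraic description $V=\cZ(\cI)$ into the semi-algebraic description $V=\cK({\sf f})$ and verifying the identity $T({\sf f})=\sum\R_d[\ux]^2+\cI$; both are already packaged in Example \ref{quadraticideal}, whose key step is the elementary formula $p h_j=\tfrac14[(p+1)^2 h_j+(p-1)^2(-h_j)]$ showing $\cI\subseteq T({\sf f})$. Once this identification is in hand, the two main theorems of the section deliver all four parts immediately.
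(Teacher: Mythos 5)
Your proof is correct and follows essentially the same route as the paper: realize $V$ as $\cK({\sf f})$ with ${\sf f}$ built from finitely many generators of $\cI$ together with their negatives, use the identity from Example \ref{quadraticideal} to read off $T({\sf f})=\sum\R_d[\ux]^2+\cI$, and then invoke Theorems \ref{schmps} and \ref{mpschm}. The only cosmetic deviations are that the paper formally inserts $f_1=1$ into ${\sf f}$ (so that ${\sf g}=\{1\}$ rather than the empty set) and obtains (iii) directly from (i) via $\cI\subseteq\hat\cI$ rather than by re-running (ii) with $\hat\cI$ in place of $\cI$; both variants are equivalent in substance.
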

\begin{proof}
    Put $f_1=1, f_2=h_1,f_3=-h_1,\dotsc, f_{2m}=h_m,f_{2m+1}=-h_m$, where  $h_1,\dotsc,h_m$ is a set of generators of $\cI$. Then, by (\ref{quqadraticzero}),
    the preordering $T({\sf f})$ is\,  
    $\sum \R_d[\ux]^2 +\cI$\, and the semi-algebraic set $\cK({\sf f})$ is $V=\cZ(\cI)$. Therefore,  Theorem  \ref{schmps} yields (i). 
    Since $\cI\subseteq \hat{\cI}$, (i) implies (ii) and (iii).  
    
    Clearly, a linear functional on $\R_d[\ux]$ is  $T({\sf f})$-positive if it is positive and annihilates $\cI$. Thus (iv) follows at once from Theorem \ref{mpschm}. 
    \end{proof}  

\begin{example} ({\it Moment problem on unit spheres})\index{Moment problem! for unit spheres}\\
    Let $S^{d-1}=\{x\in \R^d:x_1^2+\dots+x_d^2=1\}$\index[sym]{SAdA@$S^{d-1}$} 
    be the unit sphere of $\R^d$. Then $S^{d-1}$ is the real algebraic set $\cZ(\cI)$ for the ideal $\cI$ generated by $h_1(x)=x_1^2+\dots+x_d^2-1.$

    Suppose that $L$ is a linear functional  on $\R_d[\ux]$  such that
    \begin{align*}
        L(p^2)\geq 0 \quad {\rm and}~~ L((x_1^2+\dots+x_d^2-1)p)=0 \quad {\rm for }~~~ p\in \R_d[\ux].
    \end{align*} 
    Then it follows from  Corollary \ref{compactalgvar}(iv) that $L$ is an\, $S^{d-1}$-moment functional.
    
    Further, if $q\in \R[S^{d-1}]$  is strictly positive on $S^{d-1},$ that is, $q(x)>0$ for  $x\in  S^{d-1}$, then  $q\in \sum\R[S^{d-1}]^2$ by  Corollary \ref{compactalgvar}(iii). $\hfill \circ$
\end{example}

\section{The Archimedean Positivstellensatz  for quadratic modules and semirings}\label{reparchimodiules}

The main aim of this section is to  derive a representation theorem for Archimedean semirings and Archimedean quadratic modules (Theorem \ref{archpos}) and its application to the moment problem (Corollary  \ref{archrepmp}). 
By means of the so-called dagger cones we show that  to prove this general result it suffices to do so in the special cases of Archimedian semirings {\it or}  of Archimedean quadratic modules.
In this section we develop an approach based on semirings. At the end of Section \ref{Operator-theoreticappraochtothemomentprblem} we  give  a  proof using quadratic modules and Hilbert space operators.

Recall that $\sA$ is a {\it commutative real unital algebra}.
The {\it weak topology} on the dual $\sA^*$ is the locally convex topology  generated by the family of seminorms $f\to |f(a)|$, where $a\in {\sA}$. Then, for each $a\in {\sA}$, the function $a\to f(a)$ is continuous on ${\sA}^*$ in the weak topology. 
\begin{lemma}\label{kccompact}
    Suppose that $C$ is an  Archimedean unital cone of $\sA$. Then the set\, $\cK(C)=\{ \chi\in \hat{A}:\chi(a)\geq 0, a\in C\}$ is compact in the weak topology of $A^*$.
\end{lemma}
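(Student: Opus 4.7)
The plan is to identify $\cK(C)$ with a closed subset of a product of compact intervals and invoke Tychonoff's theorem. The core geometric input is the Archimedean property, which gives uniform bounds on $|\chi(a)|$ across $\chi \in \cK(C)$.

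First I would fix, for each $a \in \sA$, a number $\lambda_a > 0$ with $\lambda_a \pm a \in C$; this exists by the Archimedean assumption on $C$. For any $\chi \in \cK(C)$ the definition (\ref{definionkq}) gives $\chi(\lambda_a \pm a) \geq 0$, i.e.\ $\lambda_a \pm \chi(a) \geq 0$, so $\chi(a) \in [-\lambda_a, \lambda_a]$. Thus the evaluation map
\begin{align*}
\Phi : \cK(C) \to P := \prod_{a \in \sA} [-\lambda_a, \lambda_a], \qquad \Phi(\chi) = (\chi(a))_{a \in \sA},
\end{align*}
is well defined. Since the weak topology on $\sA^*$ is exactly the topology of pointwise evaluation, $\Phi$ is a homeomorphism of $\cK(C)$ onto $\Phi(\cK(C))$ endowed with the subspace topology from the product topology on $\prod_{a \in \sA} \R$, and a fortiori from $P$. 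By Tychonoff's theorem $P$ is compact.

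It then remains to show $\Phi(\cK(C))$ is closed in $P$. This is where I would spend most of the work, but it is routine: each defining condition for a point $(t_a)_{a \in \sA} \in P$ to lie in $\Phi(\cK(C))$ is closed under coordinatewise convergence. Concretely, the conditions
\begin{align*}
t_{a+b} = t_a + t_b, \quad t_{\mu a + \nu b} = \mu\, t_a + \nu\, t_b, \quad t_{ab} = t_a t_b, \quad t_1 = 1, \quad t_c \geq 0 \text{ for } c \in C,
\end{align*}
for all $a,b \in \sA$, $\mu,\nu \in \R$, $c \in C$, are each defined by the vanishing of a continuous function on $P$ (or by a non-strict inequality), and $\Phi(\cK(C))$ is precisely their common solution set. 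Hence $\Phi(\cK(C))$ is a closed subset of the compact space $P$, so it is compact, and transporting back via $\Phi$ shows $\cK(C)$ is compact in the weak topology.

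The only mildly delicate point is being clear that the subspace topology from $P$ on $\Phi(\cK(C))$ agrees with the weak topology restricted to $\cK(C)$; this is immediate because the weak topology is generated by the seminorms $\chi \mapsto |\chi(a)|$ which are exactly the pullbacks of the coordinate projections. No further ingredient beyond the Archimedean bound and Tychonoff is needed.
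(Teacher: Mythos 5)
Your proof is correct and follows essentially the same route as the paper: use the Archimedean property to embed $\cK(C)$ as a subspace of the compact product $\prod_{a}[-\lambda_a,\lambda_a]$, then show the image is closed and invoke Tychonoff. The only cosmetic difference is that you argue closedness of the image by exhibiting it as an intersection of closed algebraic conditions, whereas the paper takes a convergent net from the image and checks that its limit is again a $C$-positive character; these are the same argument in two phrasings.
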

\begin{proof}
    Since $C$ is Archimedean, for any $a\in A$ there exists a number $\lambda_a>0$ such that $\lambda_a-a\in C$ and $\lambda_a+a\in C$. Hence for  $\chi\in \cK(C)$ we have $\chi(\lambda_a-a)\geq 0$ and $\chi(\lambda_a+a)\geq 0$, so that $\chi(a)\in [-\lambda_a,\lambda_a]$.  Thus there is an injection $\Phi$ of $\cK(C)$ into the topological product space $$P:=\prod\nolimits_{a\in A} ~[-\lambda_a,\lambda_a]$$ given by $\Phi(\chi)= (\chi(a))_{a\in A}$. From the definitions of the corresponding topologies it follows  that $\Phi$ is a homeomorphism of $\cK(C)$, equipped with the weak topology, on the subspace $\Phi(\cK(C))$ of $P$, equipped with the product topology.
    
    We show that the image $\Phi(\cK(C))$ is closed in  $P$. Indeed, suppose $(\Phi(\chi_i))_{i\in I}$ is a net from $\Phi(\cK(C))$ which converges to $\varphi=(\varphi_a)_{a\in a}\in P$.  Then, by the definition of the weak topology,   $\lim_i \Phi(\chi_i)(a)=\lim_i \chi_i(a)=\varphi_a$ for all $a\in A$. Since for each $i$ the map $a\mapsto \chi_i(a)$ is a  character  that is nonnegative on $\cK(C)$, so is $a\mapsto \varphi_a$. Hence there exists $\chi\in \cK(C)$ such that $\varphi_a=\chi(a)$ for $a\in A$. Thus, $\varphi =\Phi(\chi)\in \Phi(\cK(C)$.
    
    The product $P$ is  a compact topological space by Tychonoff's theorem. Hence its closed subset $\Phi(\cK(C))$ is also compact and so is $\cK(C)$, because $\Phi$ is a homeomorphism  of $\cK(C)$ and $\Phi(\cK(C))$.
\end{proof}

In our approach to the Archimedean Positivstellensatz we use
the following notion.
\begin{definition}
    For a unital convex cone $C$ in $\sA$  we define
    \begin{equation}
        C^\dagger =\{ a\in {\sA} : ~ a+\epsilon  \in C~~ \textup{ for~ all }~~ \epsilon \in {(0,+\infty)} \}.
        \label{eq:ddagger}
    \end{equation}
\end{definition}
Clearly, $C^\dagger$ is again a unital convex cone in $\sA$. Since $1\in C$, we have $C\subseteq C^\dagger$.
\begin{lemma}\label{daggersimple}
    For each unital convex cone $C$ in $\sA$, we have $\cK(C)=\cK(C^\dagger)$ and $(C^\dagger)^\dagger= C^\dagger$.
\end{lemma}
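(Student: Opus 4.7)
The plan is to prove the two equalities separately, each by a short double-inclusion argument that only uses the definition of $C^\dagger$ and the fact that $C$ is a unital convex cone (so $1\in C$ and $C$ is closed under addition and multiplication by nonnegative scalars).

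For the first equality, I would start by noting the obvious inclusion $C\subseteq C^\dagger$: if $c\in C$ and $\epsilon>0$, then $c+\epsilon\cdot 1\in C$ because $C$ is a unital convex cone. Consequently $\cK(C^\dagger)\subseteq \cK(C)$, since any character nonnegative on the larger set $C^\dagger$ is nonnegative on $C$. For the reverse inclusion, fix $\chi\in \cK(C)$ and $a\in C^\dagger$. For every $\epsilon>0$ we have $a+\epsilon\in C$, so $\chi(a)+\epsilon = \chi(a+\epsilon)\geq 0$; letting $\epsilon\to 0^+$ yields $\chi(a)\geq 0$, hence $\chi\in \cK(C^\dagger)$. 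This gives $\cK(C)=\cK(C^\dagger)$.

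For the second equality, apply the inclusion $D\subseteq D^\dagger$ with $D:=C^\dagger$ to get $C^\dagger\subseteq (C^\dagger)^\dagger$. The reverse inclusion is the only step requiring a small trick: given $a\in (C^\dagger)^\dagger$ and $\delta>0$, split $\delta=\tfrac{\delta}{2}+\tfrac{\delta}{2}$. Since $a\in(C^\dagger)^\dagger$, we have $a+\tfrac{\delta}{2}\in C^\dagger$, and then by the definition of $C^\dagger$ applied to the element $a+\tfrac{\delta}{2}$ with parameter $\tfrac{\delta}{2}>0$, we get $(a+\tfrac{\delta}{2})+\tfrac{\delta}{2}=a+\delta\in C$. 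Since $\delta>0$ was arbitrary, $a\in C^\dagger$, completing the proof.

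There is no real obstacle here; the only subtlety is the halving trick in the second part, which is needed because the definition of $C^\dagger$ demands $a+\epsilon\in C$ for \emph{every} positive $\epsilon$, so one must consume only half of the available slack when passing from membership in $(C^\dagger)^\dagger$ to membership in $C^\dagger$. Both arguments are elementary and make no use of topology on $\sA$ or of any of the Archimedean machinery developed earlier in the section; this is purely a formal consequence of the definition \eqref{eq:ddagger}.
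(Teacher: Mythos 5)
Your proof is correct and takes essentially the same route as the paper: the first equality follows from $C\subseteq C^\dagger$ in one direction and from $\chi(a)=\lim_{\epsilon\searrow 0}\chi(a+\epsilon)\geq 0$ in the other, and the second equality uses exactly the ``splitting the slack'' idea --- the paper writes $a+\varepsilon_1+\varepsilon_2\in C$ for arbitrary $\varepsilon_1,\varepsilon_2>0$, while you specialize to $\varepsilon_1=\varepsilon_2=\delta/2$, which is the same argument.
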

\begin{proof}
    It is obvious that $\cK(C^\dagger)\subseteq \cK(C)$, because $C\subseteq C^\dagger$. Conversely, let  $\chi\in \cK(C)$. If $a\in C^\dagger$, then $a+\epsilon \in C$ and hence $\chi(a+\epsilon)\geq 0$ for all $\varepsilon >0$. Letting $\varepsilon \searrow 0$, we get $\chi(a)\geq 0$. Thus $\chi\in \cK(C^\dagger)$. 
    
    Clearly, $C^\dagger \subseteq (C^\dagger)^\dagger$. To verify the converse, let $a\in (C^\dagger)^\dagger$. Then $a+\varepsilon_1 \in C^\dagger$ and $a+\varepsilon_1+\varepsilon_2 \in C$ for $\varepsilon_1>0$, $\varepsilon_2>0$, so $a+\varepsilon\in \C$ for all $\varepsilon >0$. Hence $a\in C^\dagger$.
\end{proof}
\begin{example}
    Let ${\sA}$ be a real algebra of bounded real-valued functions on a set $X$ which contains the constant functions. Then
    \begin{align*}
        C:=\{ f\in {\sA}: f(x)>0~~~  \textup{for~ all} ~~ x\in X\}
    \end{align*} is an Archimedean preordering of $\sA$ and 
    \begin{align}\label{exacdagger}
        C^\dagger=\{f\in {\sA}: f(x)\geq 0~~~ \textup{for~ all}~~x\in X\}.
    \end{align}
    We  verify formula (\ref{exacdagger}). If $f(x)\geq 0$ on $X$, then $f(x)+\varepsilon >0$ on $X$, hence $f+\varepsilon\in C$ for all $\varepsilon >0$, so that $f\in C^\dagger$. Conversely, if $f\in C^\dagger$, then $f+\varepsilon \in C$, hence $f(x)+\varepsilon >0$ on $X$ for all $\varepsilon >0$; letting $\varepsilon \searrow 0$, we get $f(x)\geq 0$ on $X$ . This proves (\ref{exacdagger}). 
\end{example}

\begin{proposition} \label{proposition:qmddagger}
    If $Q$ is an Archimedean quadratic module of ${\sA}$, then $Q^\dagger$ is an Archimedean preordering of ${\sA}$.
\end{proposition}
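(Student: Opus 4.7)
The statement asks us to verify three properties of $Q^\dagger$: that it is a quadratic module, that it is Archimedean, and that it is multiplicatively closed (so that the quadratic module is in fact a preordering). I will do these in turn; the first two are routine, while the third is the technical crux.

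First, I would check the quadratic module axioms for $Q^\dagger$. One has $1\in Q^\dagger$ because $1+\varepsilon=1+\varepsilon\cdot 1\in Q$ for all $\varepsilon>0$, and closure under addition is immediate by adding $a+\varepsilon/2\in Q$ and $b+\varepsilon/2\in Q$. The only slightly substantive axiom is $c^2\,Q^\dagger\subseteq Q^\dagger$ for $c\in\sA$: since $Q$ is Archimedean, Lemma \ref{boundedele1} furnishes a $\lambda>0$ with $\lambda-c^2\in Q$, and for $b\in Q^\dagger$ and $\varepsilon>0$ I would use the decomposition
\[
c^2b+\varepsilon \;=\; c^2\bigl(b+\tfrac{\varepsilon}{\lambda}\bigr)+\tfrac{\varepsilon}{\lambda}(\lambda-c^2),
\]
whose two summands belong to $Q$ by the quadratic module axiom applied to $c^2\in\sum\sA^2$ and by $\lambda-c^2\in Q$, respectively. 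The Archimedean property of $Q^\dagger$ is then immediate from $Q\subseteq Q^\dagger$.

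The heart of the matter is multiplicative closure $Q^\dagger\cdot Q^\dagger\subseteq Q^\dagger$: given $a,b\in Q^\dagger$ and $\varepsilon>0$, I must produce $ab+\varepsilon\in Q$. My plan is to combine three ingredients. (i) From the Archimedean property, pick $M>0$ with $M\pm a,M\pm b\in Q$; Lemma \ref{boundedele1} then gives $M^2-a^2,M^2-b^2\in Q$. (ii) The squaring identity used in the proof of Lemma \ref{boundedele2}(i),
\[
M^4-(ab)^2 \;=\; M^2(M^2-a^2)+a^2(M^2-b^2)\;\in\;Q,
\]
combined with Lemma \ref{boundedele1} applied now to $ab$, yields $M^2\pm ab\in Q$. (iii) The dagger hypothesis $a+\delta,\,b+\delta\in Q$ for every $\delta>0$, which I intend to use by substituting $a+\delta$ and $b+\delta$ in place of $a$ and $b$ in the above chain and tracking the error terms, which tend to $0$ with $\delta$.

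The main obstacle is closing the gap between the fixed Archimedean bound $M^2\pm ab\in Q$ produced by (i)--(ii) and the required bound $\varepsilon\pm ab\in Q$ for arbitrarily small $\varepsilon$. Archimedean estimates alone do not shrink the size of this bound, so (iii) must be exploited substantively. The identity in the proof of Lemma \ref{boundedele2}(i) in fact shows that the gauge $\|x\|_Q:=\inf\{\lambda>0:\lambda\pm x\in Q\}$ is a submultiplicative seminorm on $\sA$; the relation $a+\delta\in Q$ for all $\delta>0$ says precisely that $a$ lies in the $\|\cdot\|_Q$-closure of $Q$, i.e.\ $Q^\dagger$ is exactly this closure. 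Continuity of multiplication in $\|\cdot\|_Q$ then reduces the question to finding, for each $\delta>0$, elements of $Q$ that approximate $ab$; with the identities above in hand, and using the quadratic module axiom to absorb one of the factors into a square, I expect the required $Q$-representation of $ab+\varepsilon$ to emerge from a careful scalar-weighted combination of the $M^2\pm(a+\delta)(b+\delta)$ and the error elements $\delta(M\mp a),\delta(M\mp b)\in Q$. This is where the argument becomes delicate, and I expect it to be the technical heart of the proof.
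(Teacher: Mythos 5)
The easy parts of your plan are fine: the verification that $Q^\dagger$ is a quadratic module is correct (the decomposition $c^2 b + \varepsilon = c^2(b + \varepsilon/\lambda) + (\varepsilon/\lambda)(\lambda - c^2)$ does the job), and $Q\subseteq Q^\dagger$ immediately gives the Archimedean property. Your observation that $Q^\dagger$ equals the $\|\cdot\|_Q$-closure of $Q$ is also correct, as is the identity $M^4-(ab)^2 = M^2(M^2-a^2)+a^2(M^2-b^2)$ yielding $M^2\pm ab\in Q$. The paper takes these steps as self-evident ("Clearly, $Q^\dagger$ is a unital convex cone ... that contains all squares").

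The problem is that your plan for the multiplicative closure cannot close. Expanding what you propose, one finds
\[
ab + \varepsilon \;=\; (a+\delta)(b+\delta)\;+\;\delta(M-a)\;+\;\delta(M-b)\;+\;\bigl(\varepsilon - 2\delta M - \delta^2\bigr),
\]
where the last three summands are in $Q$ once $\delta$ is small enough. But then everything hinges on showing that $(a+\delta)(b+\delta)$, a product of two elements of $Q$, lies in $Q$ (or at least in $Q^\dagger$). That is precisely what the proposition is asserting --- a quadratic module need not be closed under multiplication --- so your ``scalar-weighted combination'' is circular. The same circularity hides in the closure argument: continuity of multiplication in $\|\cdot\|_Q$ gives $\overline{Q}\cdot\overline{Q}\subseteq\overline{Q\cdot Q}$, but to land in $\overline{Q}=Q^\dagger$ you would need to know beforehand that $Q\cdot Q\subseteq Q^\dagger$. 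Moreover, the quantities $M^2\pm(a+\delta)(b+\delta)\in Q$ only express a \emph{bound} on the product in the $Q$-ordering; no nonnegative combination of two-sided bounds alone can manufacture the one-sided estimate $(a+\delta)(b+\delta)+\varepsilon\succeq 0$, which is a different kind of information entirely.

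The genuinely new idea in the paper's proof, which your plan is missing, is a recursive scheme that establishes $pq+\varepsilon\in Q$ for all $p,q\in Q$ and $\varepsilon>0$ \emph{directly}, without ever invoking multiplicative closure. Choosing $\lambda>0$ with $\lambda-p\in Q$, one sets $r_0:=p/\lambda$ and $r_{k+1}:=2r_k-r_k^2$ (a superattracting iteration toward $1$; note $1-r_{k+1}=(1-r_k)^2$ is a square). An induction shows $pq - 2^{-k}\lambda q r_k\in Q$, and adding the square $2^{-(k+1)}\lambda(q+r_k)^2$ produces $pq + 2^{-(k+1)}\lambda(q^2+r_k^2)\in Q$. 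Since $Q$ is Archimedean, $2^{-(k+1)}\lambda(q^2+r_k^2)$ is eventually $\preceq\varepsilon$, and adding $\varepsilon - 2^{-(k+1)}\lambda(q^2+r_k^2)\in Q$ gives $pq+\varepsilon\in Q$. Once this is in hand, the passage from $Q\cdot Q\subseteq Q^\dagger$ to $Q^\dagger\cdot Q^\dagger\subseteq Q^\dagger$ is the short computation with $\delta:=\sqrt{\lambda^2+\varepsilon}-\lambda$ that the paper carries out (and that is in the spirit of what you sketch), but the iterative construction is the technical heart of the argument and is absent from your proposal.
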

\begin{proof}
    Clearly, $Q^\dagger$ is a unital convex cone of ${\sA}$ that contains all squares. We only have to show that
    $Q^\dagger$ is closed under multiplication.
    
    Let $p,q\in Q$ and $\epsilon \in {(0,+\infty)}$ be given. We prove that $pq + \epsilon  \in Q$.
    Because $Q$ is Archimedean, there exists a $\lambda >0$ such that $\lambda  - p \in Q$.
    We recursively define a sequence $(r_k)_{k\in \N_0}$  of elements of ${\sA}$ by\, $r_0 := p/\lambda$
    and\, $r_{k+1} := 2 r_k - r_k^2$,  $k\in \N_0$. Then we have $pq-\lambda qr_0=0$ and $$pq - 2^{-(k+1)}\lambda q r_{k+1}=(pq-2^{-k}\lambda qr_k
    ) +2^{-(k+1)}\lambda qr_k^2.$$ 
    Therefore, since $q\in Q$ and $Q$ is  a quadratic module, it follows by induction that 
    \begin{align}\label{qrk}
        (pq - 2^{-k}\lambda q r_k) \in Q\, \quad {\rm for}~~k\in \N_0.
    \end{align} 
    Adding $2^{-(k+1)}\lambda (q+r_k)^2 \in Q$ we obtain\,
    $pq + 2^{-(k+1)}\lambda (q^2 + r_k^2) \in Q$\, for $k\in \N_0$. 
    For sufficiently large $k\in \N_0$ we have\, $\epsilon - 2^{-(k+1)}\lambda (q^2 + r_k^2) \in Q$
    because $Q$ is Archimedean. Adding\,
    $pq + 2^{-(k+1)}\lambda (q^2 + (r_k)^2) \in Q$ by (\ref{qrk})  yields $(pq+\epsilon) \in Q$.
    
    Now let $r,s \in Q^\dagger$ and  $\epsilon \in {(0,+\infty)}$.
    As $Q$ is Archimedean, there exists $\lambda >0$ such that $\lambda  - (r+s) \in Q$.\,
    Set $\delta := \sqrt{\lambda^2 + \epsilon}\, - \lambda$. 
    Since $r,s\in Q^\dagger$, we have $r+\delta , s+\delta  \in Q$ and $((r+\delta)(s+\delta)+ \delta\lambda)\in Q$, as shown in the preceding paragraph. Therefore, since $\delta^2 + 2\lambda \delta = \epsilon$, we obtain
    \begin{equation*}
        rs + \epsilon  = \big((r+\delta)(s+\delta ) + \delta \lambda \big) + \delta\big(\lambda  - (r+s) \big) \in Q .
    \end{equation*}
    Hence $rs\in Q^\dagger$.
\end{proof}
\begin{proposition}\label{archpre}
    Suppose that $S$ is an Archimedean semiring of $\sA$ and $C$ is an $S$-module. Then $C^\dagger$ is an Archimedean preordering of ${\sA} $
    and an $S^\dagger$-module. In particular, $S^\dagger$ is an Archimedean preordering.
\end{proposition}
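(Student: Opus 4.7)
The plan is to establish, one by one, the four properties needed: (i) $C^\dagger$ is Archimedean, (ii) $C^\dagger$ is an $S^\dagger$-module, (iii) $S^\dagger\cdot S^\dagger\subseteq S^\dagger$, and (iv) $C^\dagger$ is actually a preordering (contains $\sum{\sA}^2$ and is closed under multiplication). Item (i) is immediate from the chain $S\subseteq C\subseteq C^\dagger$ and the Archimedean-ness of $S$: for any $a\in{\sA}$, some $\lambda>0$ satisfies $\lambda-a\in S\subseteq C^\dagger$.

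Items (ii) and (iii) are direct analogues of the last paragraph of the proof of Proposition \ref{proposition:qmddagger}. For (ii), given $s\in S^\dagger$, $c\in C^\dagger$, $\epsilon>0$, choose $\lambda>0$ with $\lambda\pm s,\lambda\pm c\in S$; then for $\delta>0$ one has $(s+\delta)(c+\delta)\in S\cdot C\subseteq C$ and
\begin{align*}
sc+\epsilon=(s+\delta)(c+\delta)+\bigl[(\epsilon-2\lambda\delta-\delta^2)+\delta(\lambda-c)+\delta(\lambda-s)\bigr].
\end{align*}
For $\delta$ small enough the scalar $\epsilon-2\lambda\delta-\delta^2$ is positive, so the bracket lies in $S\subseteq C$; hence $sc+\epsilon\in C$, i.e.\ $sc\in C^\dagger$. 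Specialising to $C:=S$ yields (iii).

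Step (iv) is the main obstacle. The guiding identities are
\begin{align*}
2(\lambda^2+a^2)&=(\lambda+a)^2+(\lambda-a)^2,\\
2(c_1c_2+\lambda^2)&=(\lambda+c_1)(\lambda+c_2)+(\lambda-c_1)(\lambda-c_2),
\end{align*}
which respectively show $a^2+\lambda^2\in S$ and, using (iii), $c_1c_2+\lambda^2\in S^\dagger$, but only for the specific Archimedean-determined $\lambda$. To sharpen the first to $a^2+\epsilon\in S$ for arbitrary $\epsilon>0$, I would adapt the Newton iteration of Proposition \ref{proposition:qmddagger} to the semiring setting: start with $r_0:=(\lambda+a)/(2\lambda)$ so that both $r_0$ and $1-r_0=(\lambda-a)/(2\lambda)$ lie in $S$, and iterate $r_{k+1}:=2r_k-r_k^2=r_k(1+(1-r_k))$, which keeps $r_k$ and $1-r_{k+1}=(1-r_k)^2$ in $S$ at every stage. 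The expected payoff is an identity along this iteration that absorbs the $\lambda^2$ correction into an arbitrarily small $\epsilon>0$, yielding $a^2\in S^\dagger$; combined with step (ii) applied to $s:=a^2$ this gives the quadratic-module axiom for $C^\dagger$, and the analogous $\delta$-bookkeeping applied to the second identity then handles $c_1c_2\in C^\dagger$. The delicate step, where I expect to spend the most time, is finding the right combination of $r_k$-iterates and $\delta$-bookkeeping that legitimately shrinks the residual $\lambda^2$ using only $S$-valued additions.
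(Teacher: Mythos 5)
Your steps (i)--(iii) are correct and coincide with the first half of the paper's proof: the $\delta$-bookkeeping with $\delta=\sqrt{\lambda^2+\epsilon}-\lambda$ gives $sc\in C^\dagger$ for $s\in S^\dagger$, $c\in C^\dagger$, and specialising $C:=S$ gives closure of $S^\dagger$ under multiplication. The genuine gap is in step (iv), on two counts. First, the Newton iteration does not deliver $a^2\in S^\dagger$. Setting $v_0=1-r_0=(\lambda-a)/(2\lambda)$, the iterates satisfy $1-r_k=v_0^{2^k}$, so the only new elements of $S$ you generate are the dyadic powers $v_0^{2^k}$ and $r_k=1-v_0^{2^k}$; the basic identity $a=\lambda(r_0-v_0)$ then yields only $a^2+4\lambda^2 r_0 v_0=\lambda^2$, i.e.\ $a^2+\lambda^2\in S$, which you already had from $2(a^2+\lambda^2)=(\lambda+a)^2+(\lambda-a)^2$. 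To shrink the residual from $\lambda^2$ to an arbitrary $\epsilon$ you need a nonnegative combination of \emph{all} mixed products $(1+a/\lambda)^k(1-a/\lambda)^l$, not just the special dyadic powers the iteration produces. This is precisely Bernstein's theorem on $[-1,1]$ (Proposition 3.4), which the paper invokes: $x^2+\epsilon=\sum_{k,l}a_{kl}(1-x)^k(1+x)^l$ with $a_{kl}\geq 0$, whence $a^2+\lambda^2\epsilon\in S$ by substituting $x\mapsto a/\lambda$. That one-variable positivity result is a different tool from the quadratic-module Newton iteration of Proposition~\ref{proposition:qmddagger}, and no simple analogue of the latter closes this step.

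Second, even once $a^2\in S^\dagger$ is established, your plan for $c_1c_2\in C^\dagger$ via the identity $2(c_1c_2+\lambda^2)=(\lambda+c_1)(\lambda+c_2)+(\lambda-c_1)(\lambda-c_2)$ is circular: for $c_1,c_2\in C^\dagger$ the right-hand side already involves products of $C^\dagger$-elements, which is what you are trying to prove. What the paper does instead is observe that at this point $C^\dagger$ is an Archimedean quadratic module (it contains all squares because $a^2\in S^\dagger\subseteq C^\dagger$, and $a^2c\in S^\dagger\cdot C^\dagger\subseteq C^\dagger$ by your step (ii)); then Proposition~\ref{proposition:qmddagger} shows $(C^\dagger)^\dagger$ is a preordering, and Lemma~\ref{daggersimple} gives $(C^\dagger)^\dagger=C^\dagger$. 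That detour through the already-proved quadratic-module case is essential; direct $\delta$-bookkeeping does not reach the multiplicativity of $C^\dagger$.
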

\begin{proof}
    Let $a \in S^\dagger$ and $c\in C^\dagger$. Then, by definition,   $a + \delta  \in S$ and $c + \delta \in C$ for all $\delta >0.$ 
    Since $S$ is Archimedean, there exists a number $\lambda >0$ such that $\lambda  - a \in S\subseteq C$ and $\lambda - a \in S\subseteq C$.
    Given  $\epsilon \in {(0,+\infty)}$, we set 
    $\delta :=-\lambda + \sqrt{\lambda+\epsilon}$. Then $\delta > 0$ and $\delta^2 + 2\delta \lambda = \epsilon$, so we obtain
    \begin{equation*}
        ac + \epsilon 
        =
        (a + \delta)(c + \delta ) + \delta(\lambda-a)  + \delta(\lambda -c)
        \in
        C
        .
    \end{equation*}
    Therefore, $ac \in C^\dagger$. In particular, in the special case $C = S$ this shows that $S^\dagger$ is also a semiring. In the general case, it proves that $C^\dagger$  is an $S^\dagger$-module.

    Let $a\in {\sA}$. The crucial step is to prove that $a^2 \in S^\dagger$.
    For let $\varepsilon>0$. Since\, the polynomial  $x^2+\varepsilon$ is positive for all $x\in [-1,1]$, by Bernstein's theorem (Proposition 3.4) there exist numbers $m\in \N$ and $a_{kl}\geq 0$ for $k,l=0,\dotsc,m$ such that 
    \begin{align}\label{applbernstein}
        x^2+\varepsilon =\sum_{k,l=0}^m a_{kl} (1-x)^k(1+x)^l
    \end{align} 
    Since the semiring $S$ is Archimedean,
    there exists a  $\lambda >0$  such that $( \lambda + a)\in S$ and $(\lambda - a)\in S$. Then 
    $(1 + a / \lambda)\in S$ and $(1 - a/\lambda)\in S$ and hence $(1 + a / \lambda)^n\in S$ and 
    $(1 - a/\lambda)^n\in S$ for all $n\in \N_0$, because $S$ is a semiring. As usual, we 
    set $(1 \pm a/\lambda)^0=1$. Therefore, using (\ref{applbernstein}) and the fact that $S$ is closed under multiplication, we find
    \begin{equation*}
        (a/\lambda)^2 + \varepsilon
        =
        \sum_{k,l = 0}^m a_{kl} (1-(a/\lambda)^k (1+(a/\lambda)^l
        \in S.
    \end{equation*}
    Hence $(a^2+\lambda^2 \varepsilon)\in S$. Since $\lambda$ depends only on $a$ and $\varepsilon >0$ was arbitrary, this implies that
    $a^2 \in S^\dagger$.
    
    Thus, $S^\dagger$ is a semiring which contains all squares, that is, $S^\dagger$ is a preordering. 
    
    Since $S\subseteq C$ and hence $S^\dagger\subseteq C^\dagger$, $C^\dagger$ contains also all squares, so $C^\dagger$ is a quadratic module.
    Moreover, from $S\subseteq S^\dagger$ and $S \subseteq C \subseteq C^\dagger$ it follows that $C^\dagger$ and $S^\dagger$ are Archimedean because   $S$ is Archimedean by assumption. 
    
    Since  $C^\dagger$ is an Archimedean quadratic module as we have proved, 
    $(C^\dagger)^\dagger$ is an Archimedean preordering by  Proposition~\ref{proposition:qmddagger}. 
    By Lemma \ref{daggersimple}, $(C^\dagger)^\dagger = C^\dagger$.
\end{proof}
\begin{remark}
    For\, $\varepsilon =\frac{1}{k-1}, k\in \N,$~ there is the following explicit form of the identity (\ref{applbernstein}):
    \begin{equation*}
        x^2 + \frac{1}{k-1}=   \frac{1}{2^k k (k-1)}
        \sum_{\ell = 0}^{k} \binom{k}{\ell} (k-2\ell)^2 (1+x)^{k-\ell}(1-x)^{\ell}.
    \end{equation*}
\end{remark}

The following important result   is the  {\it Archimedean Positivstellensatz 
    for quadratic modules and    semirings}.
\index{Quadratic module! Archimedean! Archimedean Positivstellensatz}\index{Semiring! Archimedean! representation theorem} 

\begin{theorem}\label{archpos}
    Suppose that  $C$ is an $S$-module of an Archimedean semiring $S$ or 
    $C$ is an Archimedean quadratic module
    of the commutative unital real algebra ${\sA}$.
    For any $a\in {\sA}$, the following are equivalent:
    \begin{enumerate}
        \item[\em $(i)_C$] $\chi(a)>0$ for all $\chi\in\cK(C)$.
        \item[\em $(ii)_C$] There exists $\epsilon \in {(0,+\infty)}$ such that $a \in \epsilon  + C$.
    \end{enumerate}
\end{theorem}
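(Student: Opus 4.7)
The direction $(ii)_C \Rightarrow (i)_C$ is immediate: if $a - \epsilon \in C$ and $\chi \in \cK(C)$, then $\chi(a) \geq \epsilon > 0$. For the converse, my plan is to pass from $C$ to the regularized cone $C^\dagger$, where the structure is much more convenient. Under either hypothesis of the theorem, Propositions \ref{proposition:qmddagger} and \ref{archpre} imply that $C^\dagger$ is an Archimedean preordering of $\sA$, and Lemma \ref{daggersimple} gives $\cK(C^\dagger) = \cK(C)$. By Lemma \ref{kccompact} this common set is compact in the weak topology of $\sA^*$, and $\chi \mapsto \chi(a)$ is weakly continuous. Hypothesis $(i)_C$ therefore upgrades to a uniform estimate: there exists $\delta > 0$ with $\chi(a) \geq 2\delta$ for every $\chi \in \cK(C^\dagger)$.

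I will show that this uniform bound forces $a - \delta \in C^\dagger$. Granting this, the definition of $\dagger$ yields $a - \delta/2 = (a - \delta) + \delta/2 \in C$, which is $(ii)_C$ with $\epsilon = \delta/2$. Suppose, for contradiction, that $a - \delta \notin C^\dagger$. Since $C^\dagger$ is Archimedean, Proposition \ref{eidelheit} (applied to $C^\dagger$) produces an extremal element $\varphi$ of $(C^\dagger)^\wedge$ with $\varphi(1) = 1$ and $\varphi(a - \delta) \leq 0$. If I can verify that $\varphi$ is a character, then $\varphi \in \cK(C^\dagger)$ and $\varphi(a) \leq \delta < 2\delta$, contradicting the uniform bound. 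Thus the theorem reduces to the classical assertion that every normalized extremal element of $(C^\dagger)^\wedge$ is multiplicative.

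This last point is the main obstacle. Fix $t \in C^\dagger$ and, by the Archimedean property, choose $\lambda > 0$ with $\lambda - t \in C^\dagger$. Because $C^\dagger$ is a preordering, both $\psi_1(b) := \varphi(tb)$ and $\psi_2(b) := \varphi((\lambda - t)b)$ lie in $(C^\dagger)^\wedge$, and $\psi_1 + \psi_2 = \lambda\varphi$. When $0 < \varphi(t) < \lambda$, this exhibits $\lambda\varphi$ as a sum of two nonzero elements of the dual cone, so extremality of the ray through $\varphi$ forces $\psi_1 = \varphi(t)\varphi$, i.e., $\varphi(tb) = \varphi(t)\varphi(b)$ for every $b \in \sA$. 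The boundary cases $\varphi(t) \in \{0, \lambda\}$ are handled by expanding the inequality $\varphi\bigl(t(b-s)^2\bigr) \geq 0$ (valid since $t(b-s)^2 \in C^\dagger$) as a quadratic in $s \in \R$: for $\varphi(t) = 0$ the resulting linear estimate forces $\varphi(tb) = 0$, and for $\varphi(t) = \lambda$ one applies the same argument to $\lambda - t$. Finally, the Archimedean property yields $\sA = C^\dagger - C^\dagger$, so bilinearity extends multiplicativity to all of $\sA$, and $\varphi$ is a character, completing the argument.
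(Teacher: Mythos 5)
Your proposal is correct and follows essentially the same route as the paper's semiring proof in Section \ref{reparchimodiules}: reduce via Propositions \ref{proposition:qmddagger}/\ref{archpre} and Lemma \ref{daggersimple} to the Archimedean preordering $C^\dagger$, invoke Proposition \ref{eidelheit} to obtain a normalized extremal element of the dual cone, prove that extremality forces multiplicativity, and close the argument through compactness of $\cK(C)$ (Lemma \ref{kccompact}) and continuity of $\chi\mapsto\chi(a)$. The only cosmetic divergence is the treatment of the degenerate case $\varphi(t)=0$, where you use the quadratic inequality $\varphi\bigl(t(b-s)^2\bigr)\geq 0$, while the paper instead exploits the semiring closure $(\lambda-b)t,\ tb\in C$ directly; both work since $C^\dagger$ is a preordering.
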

The following simple fact is crucial for our proofs of Theorem \ref{archpos} given below.
\begin{lemma}\label{ccdagger}
    In the notation of Theorem \ref{archpos}, each of the conditions $(i)_C$ and $(ii)_C$ holds for $C$ if and only if it does for $C^\dagger$.
\end{lemma}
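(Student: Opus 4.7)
The plan is to dispatch the two equivalences separately, and observe that neither is difficult given what has already been established.

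For the equivalence $(i)_C\Longleftrightarrow (i)_{C^\dagger}$, I would simply invoke Lemma \ref{daggersimple}, which states that $\cK(C)=\cK(C^\dagger)$. Since $(i)_C$ and $(i)_{C^\dagger}$ are literally the assertion ``$\chi(a)>0$ for every character in the set $\cK(C)=\cK(C^\dagger)$,'' there is nothing further to verify.

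For the equivalence $(ii)_C\Longleftrightarrow(ii)_{C^\dagger}$, the forward direction is immediate from the inclusion $C\subseteq C^\dagger$: if $a-\epsilon\in C$ for some $\epsilon>0$, then $a-\epsilon\in C^\dagger$, so the same $\epsilon$ witnesses $(ii)_{C^\dagger}$. For the converse, suppose $a-\epsilon\in C^\dagger$ for some $\epsilon>0$. By the very definition \eqref{eq:ddagger} of the dagger, $(a-\epsilon)+\delta\in C$ for every $\delta>0$. Choosing, say, $\delta=\epsilon/2$, one obtains $a-\epsilon/2\in C$, so that $(ii)_C$ holds with $\epsilon$ replaced by $\epsilon/2$.

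I do not foresee any real obstacle: both implications follow by unwinding definitions. The only minor subtlety is to keep track of the ``strict'' quantifier in $(ii)_C$ (``there exists $\epsilon>0$'') versus the ``for all $\delta>0$'' built into the definition of $C^\dagger$; the trick of shrinking $\epsilon$ to $\epsilon/2$ takes care of this. Note that the lemma applies uniformly in both settings of Theorem \ref{archpos}, since it uses only the definitions of $C^\dagger$ and of $\cK(\cdot)$ and not the algebraic structure of $C$ as a quadratic module or $S$-module.
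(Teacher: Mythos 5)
Your proof is correct and follows essentially the same route as the paper: $(i)_C \Leftrightarrow (i)_{C^\dagger}$ via Lemma \ref{daggersimple}, the easy direction of $(ii)$ via $C\subseteq C^\dagger$, and the nontrivial direction by trading some of the strict $\epsilon$-margin for the "$+\delta$" built into the definition of $C^\dagger$. The paper writes $a=2\epsilon+c^\dagger$ and absorbs one $\epsilon$ into the dagger; you write $a-\epsilon\in C^\dagger$ and add $\delta=\epsilon/2$ — identical reasoning, only the bookkeeping of the constant differs.
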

\begin{proof} Since $\cK(C)=\cK(C^\dagger)$ by Lemma \ref{daggersimple}, this is obvious of $(i)_C$. For $(ii)_C$, since $C\subseteq C^\dagger$, it suffices it verify that $(ii)_{C^\dagger}$ implies $(ii)_C$ . Indeed, if $a=2 \epsilon + c^\dagger$ with $\epsilon 
    >0$ and $c^\dagger \in C^\dagger$, then by the definition of $C^\dagger$ we have $c:=c^\dagger+\epsilon\in C$, so that $a=\epsilon +c\in C$. Thus, $(ii)_C$ is equivalent to $(ii)_{C^\dagger}$. 
    \end{proof}

Before proving the theorem, we discuss 
this result with a couple of remarks.
\begin{remark}\label{remarkarchpos}
    
    1.) First we emphasize that in strong contrast to   Theorem \ref{schmps}
    the above Theorem \ref{archpos} does not require that $\sA$ or $C$ or $S$ is finitely generated.
    
    2.) Using  the fact that the preordering $T({\sf f})$ is Archimedean (by Proposition \ref{prearchcom}) it is clear that Theorem\ref{schmps} follows directly from Theorem \ref{archpos}. In Section \ref{momentproblemstrictpos} we have given an ``elementary'' proof of Theorem \ref{schmps} which is based on 
    Proposition \ref{continuityLprop}(i) and does not depend on Theorem \ref{archpos}.

    3.) The proof of implication $(ii)_c\to (i)_C$ is very easy:
    Indeed, 
    if $a=\epsilon  +c$ with $c\in C$, then $\chi(a)=\epsilon\chi(1)+\chi(c)= \epsilon+\chi(c)\geq \epsilon >0$ for all $\chi\in \cK(C)$.
    
    4.) Since $1\in C$, $(ii)_C$  implies that $a\in C$. The stronger statement $a\in \epsilon +C$ is  given in order to get an equivalence of conditions $(i)_C$ and $(ii)_C$. 
    
    The main assertion of Theorem \ref{archpos} states that  {\it the positivity (!) of the values $\chi(a)$ for all $C$-positive characters on ${\sA}$ implies that  $a$ belongs to $C$}.

    5.)  Recall that $C^\dagger$ is an Archimedean preordering by Propositions \ref{proposition:qmddagger} and \ref{archpre}. Therefore, by Lemma \ref{ccdagger}, to prove Theorem \ref{archpos} it suffices to do so in the case when $C$ is an Archimedean preordering of ${\sA}$. In particular, it is enough to show Theorem \ref{archpos} for Archimedean semirings or for Archimedean quadratic modules. In this section we prove of Theorem \ref{archpos} for Archmimedean semirings, while in Section \ref{Operator-theoreticappraochtothemomentprblem} we give an approach for   Archimedean quadratic modules. 
\end{remark}

{\it Proof of Theorem \ref{archpos} for Archimedean semirings:}\\
The trivial implication $(ii)_C\to (i)_C$ was already noted in the preceding remark 3.).

We suppose  that $C$ is an Archimedean semirings of ${\sA}$ and prove the main implication $(i)_C\to (ii)_C$ . For let  $c\in {\sA}$ be such that $c\notin C$.
Then, by  Proposition \ref{eidelheit}, there exists an extremal (!) functional $\varphi$ of $C^\wedge$ such that $\varphi(1)=1$ and $\varphi(c)\leq 0$. We prove that $\varphi\in \hat{{\sA}}$, that is,
\begin{align}\label{characterprop}
    \varphi(ab)=\varphi(a)\varphi(b)\quad \textup{for}~~ a,b\in {\sA}.
\end{align}  

Let $a\in {\sA}$. Since $C$ is Archimedean, there exists  $\lambda>0$ such that $\lambda+a\in C$, so that $a=(\lambda + a)-\lambda\in C-C$. Thus, ${\sA}=C-C$. Hence  it suffices to verify (\ref{characterprop}) for $a\in C$ and similarly for $b\in C.$ Then $\varphi(a)\geq 0$, since $\varphi$ is $C$-positive.

Case 1:~ $\varphi (a)=0$.\\ Let $b\in C$ and choose $\lambda>0$ such that $\lambda -b\in C$. Then $(\lambda-b)a\in C$ and $ab\in C$   (because $C$ is a semiring!), so that    $\varphi((\lambda-b)a)= \lambda \varphi(a)-\varphi(ab)=-\varphi(ab)\geq 0$ and $\varphi (ab)\geq 0$. Hence $\varphi(ab)=0$, so that  (\ref{characterprop}) holds.

Case 2:~ $\varphi(a)>0$.\\ We choose $\lambda>0$ such that $(\lambda {-}a )\in C$ and $\varphi(\lambda{-}a)>0$. Because $C$ is a semiring, the functionals $\varphi_1(\cdot):=\varphi(a)^{-1}\varphi( a\cdot)$ and $\varphi_2(\cdot):=\varphi(\lambda -a)^{-1}\varphi((\lambda-a)\cdot)$ belong to the dual cone $C^\wedge$. They satisfy
\begin{align*}
    \varphi= \lambda^{-1}\varphi(a)\, \varphi_1 +  \lambda^{-1}\varphi(\lambda -a)\, \varphi_2 ,
\end{align*} 
so $\varphi$ is a convex combination of two  functionals from $C^\wedge$. Since $\varphi$ is extremal, it follows that $\varphi_1=\varphi$ which gives  (\ref{characterprop}). 

Summarizing both cases, we have shown that $\varphi\in \hat{{\sA}}$.
Recall that $\varphi(c)\leq 0.$

Now it is easy to prove that $(i)_C$ implies $(ii)_C$. Let $a\in {\sA}$ be as in $(i)_C$. Then, since the function $a\to \varphi (a)$ is continuous on the compact set $\cK(C)$  in the weak topology (by Lemma \ref{kccompact}), there exists  $\epsilon>0$ such that $c:=a-\epsilon$ also satisfies $\varphi(c)>0$ for all $\varphi\in \cK(C)$. Therefore, by the preceding proof, $c\notin C$ cannot hold, so that $c\in C$. Hence $a=\epsilon + c\in \epsilon +C.$
$\hfill \Box$ 

\begin{corollary} 
    Under the assumptions of Theorem \ref{archpos}, we have
    \begin{align*}
        C^\dagger =\{a\in {\sA}: \chi(a)\geq 0 ~ \textup{ for all } ~\chi\in \cK(C)\, \}.
    \end{align*}
\end{corollary}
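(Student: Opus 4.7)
The plan is to prove the two inclusions separately, using Theorem~\ref{archpos} for the nontrivial direction and the definition of $C^\dagger$ for the easy one. Denote the right-hand side by
\begin{equation*}
    N := \{a\in\sA : \chi(a) \geq 0 \text{ for all } \chi \in \cK(C)\}.
\end{equation*}

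For the inclusion $C^\dagger \subseteq N$, I would start with $a \in C^\dagger$. By definition, $a + \epsilon \in C$ for every $\epsilon > 0$, so any $\chi \in \cK(C)$ satisfies $\chi(a) + \epsilon = \chi(a+\epsilon) \geq 0$. Letting $\epsilon \searrow 0$ gives $\chi(a) \geq 0$, so $a \in N$. This direction is routine and needs no appeal to Theorem~\ref{archpos}.

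For the reverse inclusion $N \subseteq C^\dagger$, take $a \in N$ and fix an arbitrary $\epsilon > 0$. For every $\chi \in \cK(C)$ we then have $\chi(a+\epsilon) = \chi(a) + \epsilon \geq \epsilon > 0$, which is condition $(i)_C$ of Theorem~\ref{archpos} applied to the element $a+\epsilon$. The hypothesis of the corollary is exactly the hypothesis of Theorem~\ref{archpos}, so condition $(ii)_C$ yields some $\epsilon' > 0$ with $a+\epsilon \in \epsilon' + C \subseteq C$. Since $\epsilon > 0$ was arbitrary, $a \in C^\dagger$ by definition. This is the one place where the nontrivial Archimedean Positivstellensatz is invoked.

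There is no real obstacle here: the only conceptual point is the little shift by $\epsilon$ that turns the nonstrict positivity $\chi(a) \geq 0$ into the strict positivity $\chi(a+\epsilon) > 0$ needed to apply Theorem~\ref{archpos}. One could also phrase the result as saying $C^\dagger$ coincides with the ``saturation'' $\{a : \chi(a) \geq 0 \text{ on } \cK(C)\}$, which together with Lemma~\ref{daggersimple} (giving $\cK(C) = \cK(C^\dagger)$) shows that $C^\dagger$ is already saturated in this character-theoretic sense.
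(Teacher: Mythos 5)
Your proof is correct and is essentially the same as the paper's: the easy inclusion $C^\dagger \subseteq N$ by letting $\epsilon \searrow 0$, and the reverse inclusion by observing that $\chi(a+\epsilon) \geq \epsilon > 0$ for all $\chi \in \cK(C)$ and invoking Theorem~\ref{archpos} to conclude $a+\epsilon \in C$ for each $\epsilon>0$, hence $a\in C^\dagger$.
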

\begin{proof}
    If $\chi(a)\geq 0$ for  $\chi\in \cK(C)$, then  for  $\epsilon>0$ we have\,
    $\chi(a+\epsilon )=\chi(a)+\epsilon >0$.  Therefore,
    $a+\epsilon\in C$  by Theorem \ref{archpos}, so that $a\in C^\dagger $. 
    
    Conversely, if $a\in C^\dagger$ and $\chi\in \cK(C)$, then $a+\epsilon \in C$. Hence $\chi(a)+\epsilon=\chi(a+\epsilon ) \geq 0$  for all $\epsilon>0$. Letting 
    $\epsilon\searrow 0$ yields $\chi(a)\geq 0$.   
    \end{proof}

The following is the main application of Theorem \ref{archpos} to the moment problem.
\begin{corollary}\label{archrepmp}
    Retain the assumptions  of  Theorem \ref{archpos}. Suppose that $L$ is a linear functional on ${\sA}$ such that $L(c)\geq 0$ for all $a\in C$. Then there exists a Radon measure $\mu$ on the compact topological space $\cK(C)$ such that
    \begin{align}
        L(a)=\int_{\cK(C)} \chi(a)~ d\mu(\chi)~~~  \textup{for}~~ a\in {\sA}.
    \end{align}
\end{corollary}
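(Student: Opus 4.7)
The plan is to realize $L$ as integration against a Radon measure on the compact space $\cK(C)$ via a Gelfand-type transform followed by Stone--Weierstrass and the Riesz representation theorem. Define the linear map $\Phi \colon {\sA} \to C(\cK(C))$ by $\Phi(a)(\chi) := \chi(a)$, where $\cK(C)$ carries the weak topology inherited from ${\sA}^*$. By Lemma \ref{kccompact} the space $\cK(C)$ is compact, and by the very definition of the weak topology each $\Phi(a)$ is continuous, so $\Phi$ is a well-defined unital algebra homomorphism.

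The crucial analytic input is the norm estimate
\begin{equation*}
|L(a)| \le L(1)\,\|\Phi(a)\|_\infty \qquad \text{for all } a\in {\sA}.
\end{equation*}
To prove this I would set $M := \|\Phi(a)\|_\infty = \sup_{\chi\in\cK(C)}|\chi(a)|$ and note that $\chi(M\pm a)\ge 0$ on $\cK(C)$. By the Corollary just proved from Theorem \ref{archpos}, this places $M\pm a$ in $C^\dagger$. Since $L\ge 0$ on $C$ and any element of $C^\dagger$ is of the form $c-\epsilon$ with $c\in C$, $\epsilon>0$ arbitrarily small, one gets $L\ge 0$ on $C^\dagger$, whence $L(M\pm a)\ge 0$, i.e.\ $\pm L(a)\le M L(1)$.

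This estimate ensures that $\tilde L \colon \Phi({\sA}) \to \R$, $\tilde L(\Phi(a)) := L(a)$, is well-defined (two elements with the same image differ by something in $C^\dagger\cap(-C^\dagger)$, hence annihilated by $L$), linear, positive on nonnegative functions in $\Phi({\sA})$, and bounded with norm $\le L(1)$ in the sup-norm. Now $\Phi({\sA})$ is a unital subalgebra of $C(\cK(C))$ that separates points (distinct characters differ on some $a\in{\sA}$), so Stone--Weierstrass gives that $\Phi({\sA})$ is dense in $C(\cK(C))$. Extend $\tilde L$ by continuity to a positive bounded linear functional on $C(\cK(C))$, and apply the Riesz representation theorem to obtain a positive Radon measure $\mu$ on $\cK(C)$ with $\tilde L(f)=\int f\, d\mu$ for every $f\in C(\cK(C))$. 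Restricting to $f=\Phi(a)$ yields $L(a)=\int_{\cK(C)}\chi(a)\,d\mu(\chi)$.

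The step I expect to be most delicate is the norm estimate and the passage $L\ge 0$ on $C \Rightarrow L\ge 0$ on $C^\dagger$: it is this use of the $\dagger$-closure together with the Archimedean Positivstellensatz that converts the algebraic positivity hypothesis into genuine sup-norm continuity, after which Stone--Weierstrass and Riesz do the rest in a standard way.
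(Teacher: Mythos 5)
Your proof is correct, but it takes a genuinely different route from the paper's. The paper's proof is very short: it first shows that $L(a)\ge 0$ whenever $\chi(a)\ge 0$ for all $\chi\in\cK(C)$ (by writing $a+\epsilon\in C$ via Theorem~\ref{archpos} and letting $\epsilon\searrow 0$), and then invokes an abstract representation result from Chapter~1 (Proposition~1.9, a Haviland/Choquet-type statement for commutative algebras) to produce the measure. You instead unpack that black box: you build the Gelfand transform $\Phi\colon{\sA}\to C(\cK(C))$, prove the sup-norm bound $|L(a)|\le L(1)\|\Phi(a)\|_\infty$ from the dagger-cone characterization, use Stone--Weierstrass to show $\Phi({\sA})$ is dense, and finish with the Riesz representation theorem. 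Your version is longer but self-contained and makes the mechanism transparent, whereas the paper's is a one-line reduction to earlier machinery; both are valid. One small point worth spelling out: ``extend by continuity to a positive functional'' needs a word of justification, since positivity on a dense subalgebra is not automatically inherited by the extension. You have the ingredients -- either note that $\|\tilde L\|=\tilde L(1)=L(1)$ forces the extension to be positive, or argue directly that if $\Phi(a_n)\to f\ge 0$ uniformly then $\Phi(a_n)+\epsilon\ge 0$ for large $n$, hence $a_n+\epsilon\in C^\dagger$ and $\tilde L(\Phi(a_n))\ge -\epsilon L(1)$, giving $\tilde L(f)\ge 0$ in the limit.
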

\begin{proof} Let $a\in {\sA}$ be such that $\chi(a)\geq 0$ for $\chi\in \cK(C)$. Then, for each $\epsilon >0$, $a+\epsilon$ satisfies $(i)_C$, so $a+\epsilon \in C$ by Theorem \ref{archpos}. Hence $L(a+\epsilon )=L(a)+\epsilon L(1)\geq 0.$ Letting $\epsilon\searrow 0$, we get $L(a)\geq 0$. Now the assertion follows from Proposition 1.9. 
    \end{proof}
\section{The Archimedean representation theorem for polynomial algebras}\label{archimedeanpolynomials}

In this  section we first restate Theorem \ref{archpos} and Corollary \ref{archrepmp} in the special case when ${\sA}$ is the polynomial algebra $\R_d[\ux]$.

We begin with the case of Archimedean quadratic modules. Assertion (i) of the following theorem is also called the {\it Archimedean Positivstellensatz}.\index{Positivstellensatz! Archimedean}
\begin{theorem}\label{archmedpsq}
    Let ${\sf f}=\{f_1,\dotsc,f_k\}$ be a finite subset of $\R_d[\ux]$. Suppose that the quadratic module $Q({\sf f})$ defined by (\ref{quadraticqf}) is Archimedean. 
    \begin{itemize}
        \item[\em (i)]\, If $h\in \R_d[\ux]$ satisfies $f(x)> 0$ for all $x\in \cK({\sf f})$, then $h\in Q({\sf f}).$
        \item[\em (ii)]~ 
        Any $Q({\sf f})$-positive linear functional $L$ on\, $\R_d[\ux]$ is a $\cK({\sf f})$-moment functional, that is, there exists a measure $\mu\in M_+(\R^d)$ supported on the compact set $\cK({\sf f})$ such that\, $L(f)=\int f(x)\, d\mu(x)$\, for\,  $f\in \R_d[\ux]$.\index{Moment problem! for compact semi-algebraic sets}
    \end{itemize}
\end{theorem}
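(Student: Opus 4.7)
The plan is to obtain both parts by specializing the general results of Section~\ref{reparchimodiules} --- namely Theorem~\ref{archpos} and Corollary~\ref{archrepmp} --- to the polynomial algebra ${\sA}=\R_d[\ux]$ with the cone $C=Q({\sf f})$. The essential input is Example~\ref{ckcpoly}, which identifies $\hat{\sA}$ with $\R^d$ via the point evaluations $\chi_t(p)=p(t)$ and, under this identification, identifies $\cK(Q({\sf f}))$ with the semi-algebraic set $\cK({\sf f})$. By Corollary~\ref{archicompact}, the hypothesis that $Q({\sf f})$ is Archimedean guarantees that $\cK({\sf f})$ is compact.

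For (i), assume $h(x)>0$ on $\cK({\sf f})$. Then $\chi_t(h)=h(t)>0$ for every $\chi_t\in\cK(Q({\sf f}))$, so the implication $(i)_C\Rightarrow(ii)_C$ of Theorem~\ref{archpos} produces an $\epsilon>0$ with $h\in \epsilon+Q({\sf f})$. Since $1\in Q({\sf f})$ and $Q({\sf f})$ is stable under multiplication by squares, we have $\epsilon=(\sqrt{\epsilon})^{2}\cdot 1\in Q({\sf f})$, whence $h=\epsilon+(h-\epsilon)\in Q({\sf f})+Q({\sf f})\subseteq Q({\sf f})$.

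For (ii), apply Corollary~\ref{archrepmp} to the $Q({\sf f})$-positive functional $L$ on ${\sA}=\R_d[\ux]$. This produces a Radon measure $\mu$ on the compact space $\cK(Q({\sf f}))$ such that $L(p)=\int \chi(p)\,d\mu(\chi)$ for all $p\in \R_d[\ux]$. Transporting $\mu$ along the identification $\chi_t\leftrightarrow t$ yields a Radon measure on $\cK({\sf f})\subseteq\R^d$ with $L(p)=\int_{\cK({\sf f})} p(x)\,d\mu(x)$, which is exactly the claim.

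The only real ``work'' --- and it is not an obstacle --- is bookkeeping in the identification between the abstract character space $\hat{{\sA}}$ equipped with the weak topology (on which Lemma~\ref{kccompact} supplies compactness) and $\cK({\sf f})\subseteq\R^d$ with its Euclidean topology. One checks that the evaluation bijection is a homeomorphism, which is standard because the coordinate functions $x_1,\dotsc,x_d$ generate both topologies via the seminorms $\chi\mapsto|\chi(x_k)|$. Once this is in hand, the transport of $\mu$ to $\R^d$ is automatic and the theorem follows.
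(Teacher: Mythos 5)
Your proof is correct and follows essentially the same route as the paper: specialize Theorem~\ref{archpos} and Corollary~\ref{archrepmp} to ${\sA}=\R_d[\ux]$ and $C=Q({\sf f})$, using Example~\ref{ckcpoly} to identify $\cK(Q({\sf f}))$ with $\cK({\sf f})\subseteq\R^d$. Your extra remarks (extracting $h\in Q({\sf f})$ from $h\in\epsilon+Q({\sf f})$, and checking that the character-space identification is a homeomorphism) are correct elaborations of steps the paper leaves implicit.
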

\begin{proof}
    Set ${\sA}=\R_d[\ux]$ and $C=Q({\sf f})$. As noted in Example \ref{ckcpoly},  characters $\chi$ of ${\sA}$ correspond to  points $\chi_t\cong t$ of $\R^d$ 
    and we have $\cK(Q)=\cK({\sf f})$ under this identification. Hence the assertions of (i) and (ii) follow at  once from Theorem \ref{archpos} and Corollary \ref{archrepmp}, respectively. 
    \end{proof}  

Next we turn to modules for semirings.
\begin{example}\label{exacfg}
    Let ${\sf f}=\{f_1,\dotsc,f_k\}$ and ${\sf g}=\{g_0=1,g_1,\dotsc,g_r\}$ be finite subsets of $\R_d[\ux]$, where $k\in \N, r\in \N_0$. Then
    \begin{align}\label{cfg}
        C({\sf f},{\sf g}):= g_0 S({\sf f})+ g_1 S({\sf f})+\cdots+ g_r S({\sf f})
    \end{align}
    is an\, $S({\sf f})$-module for the semiring\, $S({\sf f})$. Clearly, 
    $\cK(C({\sf f},{\sf g}))= \cK({\sf f})\cap \cK({\sf g})$.
    
    Note that in the special case $r=0$\, the $S({\sf f})$-module $C({\sf f}, {\sf g})$ is just the semiring $S({\sf f})$ itself and  $\cK(C({\sf f},{\sf g}))= \cK({\sf f}).$
\end{example}

\begin{theorem}\label{archmedps}
    Let ${\sf f}=\{f_1,\dotsc,f_k\}$ and ${\sf g}=\{g_0=1,g_1,\dotsc,g_r\}$ be  subsets of $\R_d[\ux]$, where $k\in \N, r\in \N_0$. Suppose that the semiring  $S({\sf f})$ defined by (\ref{semiringf}) is Archimedean. Let $C({\sf f},{\sf g})$ denote the $S({\sf f})$-module defined by (\ref{cfg}).
    \begin{itemize}
        \item[\em (i)]\, If $h\in \R_d[\ux]$ satisfies $h(x)> 0$ for all $x\in \cK({\sf f})\cap\cK({\sf g})$, then $h\in C({\sf f}, {\sf g}).$
        \item[\em (ii)]~
        Suppose $L$ is a linear functional  on\, $\R_d[\ux]$ such that $L(f)\geq 0$ for all $f\in  C({\sf f}, {\sf g})$. Then $L$ is a 
        $ \cK({\sf f})\cap \cK( {\sf g})$--moment functional, that is, there is a measure $\mu\in M_+(\R^d)$ supported on the compact semi-algebraic set $\cK({\sf f})\cap \cK({\sf g})$ such that\, $L(f)=\int f(x)\, d\mu(x)$\, for all\,  $f\in \R_d[\ux]$.\index{Moment problem! for compact semi-algebraic sets}
    \end{itemize}
\end{theorem}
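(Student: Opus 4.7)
The plan is to recognize Theorem \ref{archmedps} as a direct specialization of Theorem \ref{archpos} and Corollary \ref{archrepmp} to the concrete setting ${\sA}=\R_d[\ux]$, $S = S({\sf f})$, $C = C({\sf f},{\sf g})$, following exactly the pattern used in the proof of Theorem \ref{archmedpsq} for quadratic modules. The three ingredients to match up are: (a) the hypothesis that $S({\sf f})$ is Archimedean gives precisely the ``Archimedean semiring'' alternative in Theorem \ref{archpos}; (b) Example \ref{exacfg} shows $C({\sf f},{\sf g})$ is an $S({\sf f})$-module, so $C$ is the abstract object to which Theorem \ref{archpos} applies; (c) Example \ref{ckcpoly} identifies characters of $\R_d[\ux]$ with points of $\R^d$, and under this identification Example \ref{exacfg} yields $\cK(C({\sf f},{\sf g})) = \cK({\sf f}) \cap \cK({\sf g})$.

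For part (i), the hypothesis $h(x)>0$ on $\cK({\sf f})\cap\cK({\sf g})$ translates to $\chi(h)>0$ for every $\chi \in \cK(C({\sf f},{\sf g}))$, i.e.\ condition $(i)_C$ of Theorem \ref{archpos}. The theorem then produces an $\varepsilon >0$ with $h-\varepsilon \in C({\sf f},{\sf g})$. To finish I need that $\varepsilon$ itself is in $C({\sf f},{\sf g})$; this is immediate because $C({\sf f},{\sf g})$ is a unital cone (it contains $1$ and is closed under nonnegative scalar multiples), so $\varepsilon\cdot 1 \in C({\sf f},{\sf g})$, and therefore $h=\varepsilon+(h-\varepsilon)\in C({\sf f},{\sf g})$ by additivity.

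For part (ii), the assumption that $L$ is nonnegative on $C({\sf f},{\sf g})$ is exactly the hypothesis of Corollary \ref{archrepmp}. The corollary produces a Radon measure $\mu$ on the compact space $\cK(C({\sf f},{\sf g}))$ with $L(a)=\int \chi(a)\,d\mu(\chi)$ for all $a\in {\sA}$. Rewriting through the identification $\chi_t(p)=p(t)$ yields $L(p)=\int p(x)\,d\mu(x)$ on $\cK({\sf f})\cap \cK({\sf g})$, as required. Compactness of $\cK({\sf f})\cap \cK({\sf g})$ is already built in: since $S({\sf f})\subseteq C({\sf f},{\sf g})$ and $S({\sf f})$ is Archimedean, $C({\sf f},{\sf g})$ itself is Archimedean, so Lemma \ref{kccompact} gives compactness of $\cK(C({\sf f},{\sf g}))$ in the weak topology, which under the identification with $\R^d$ is the usual topology. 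There is no serious obstacle here; the only step that requires any care is the bookkeeping to verify that the $S$-module alternative of Theorem \ref{archpos} (rather than the quadratic-module alternative used in Theorem \ref{archmedpsq}) is the correct branch to invoke, and that the trailing constant $\varepsilon$ can be absorbed into $C({\sf f},{\sf g})$.
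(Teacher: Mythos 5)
Your proposal is correct and follows essentially the same route as the paper, whose proof is the one-line instruction ``Combine Theorem \ref{archpos} and Corollary \ref{archrepmp} with Example \ref{exacfg}.'' You have simply unpacked that instruction: identifying $C({\sf f},{\sf g})$ as the $S({\sf f})$-module, matching $\cK(C({\sf f},{\sf g}))$ with $\cK({\sf f})\cap\cK({\sf g})$ via the character identification, and invoking the semiring branch of the Archimedean Positivstellensatz together with the representation corollary.
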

\begin{proof}
    Combine Theorem \ref{archpos} and Corollary \ref{archrepmp} with Example \ref{cfg}.
    \end{proof} 
\smallskip

If $r=0$, then the $S({\sf f})$-module $C({\sf f}, {\sf g})$ coincides with  the semiring $S({\sf f})$  and we have $\cK(C({\sf f},{\sf g}))= \cK({\sf f}).$ Then  Theorem \ref{archmedps}(i) is  the Archimedean Positivstellensatz for semirings in the special case of the polynomial algebra $\R_d[\ux]$.\smallskip

The next theorem is an application of Theorem \ref{archmedps}. It sharpens  Theorem \ref{schmps} by representing positive polynomials on a compact semi-algebraic set by  a certain subset of the corresponding preordering.

\begin{theorem}\label{auxsemiring}
    Suppose ${\sf f}=\{f_1,\dotsc,f_r\}$, $r\in \N$, is a subset of $\R_d[\ux]$ such that the semi\-algebraic set 
    $\cK({\sf f})$
    is compact.   
    Then there  exist polynomials $p_1,\dotsc,p_s\in \R_d[\ux]$, $ s\in \N,$ such that
    the semiring $S$ of $\R_d[\ux]$ generated by $f_1,\dotsc,f_r, p_1^2, \dots,p_s^2$ is Archimedean. 
    
    If $h\in \R_d[\ux]$
    satisfies $h(x)>0$ for all  $x\in \cK({\sf f})$, then $h$ is a finite sum of polynomials 
    \begin{align}\label{specialform}
        \alpha\, f_1^{e_1}\cdots f_r^{e_r}~ f_1^{2n_1}\cdots f_r^{2n_r}\, p_1^{2k_1}\cdots p_s^{2k_s},
    \end{align}
    where $\alpha\geq 0$, $e_1,\dotsc,e_r\in \{0,1\}$, $n_1,\dotsc,n_r, k_1,\dotsc,k_s\in \N_0$.
    
    Further, each linear functional on $\R_d[\ux]$ that is nonnegative on all polynomials \eqref{specialform}
    (with $\alpha=1$) is a $\cK({\sf f})$-moment functional.
\end{theorem}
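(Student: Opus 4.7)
The plan is to deduce all three assertions from the Archimedean Positivstellensatz for semirings (Theorem \ref{archmedps}) applied to the semiring $S$ generated by $f_1,\dotsc,f_r$ together with a carefully chosen finite list of squares $p_1^2,\dotsc,p_s^2$. The equality $\cK(f_1,\dotsc,f_r,p_1^2,\dotsc,p_s^2)=\cK({\sf f})$ holds automatically, since $p_j^2\geq 0$ on all of $\R^d$, so once $S$ is Archimedean Theorem \ref{archmedps} (used with the degenerate index set ${\sf g}=\{1\}$ of Example \ref{exacfg}, for which $C({\sf f}',{\sf g})=S({\sf f}')$) delivers both the representation and the moment functional claim.

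The principal obstacle is choosing the $p_j$'s so that $S$ is Archimedean. Since $\cK({\sf f})$ is compact, Proposition \ref{prearchcom} gives that the preordering $T({\sf f})$ is Archimedean, so for each coordinate $k=1,\dotsc,d$ there exists $\lambda_k>0$ with $\lambda_k\pm x_k\in T({\sf f})$. By the definition (\ref{preorderingtf}), each of these $2d$ elements admits a presentation
\begin{equation*}
    \lambda_k\pm x_k \;=\; \sum_{e\in\{0,1\}^r} f_1^{e_1}\cdots f_r^{e_r}\,\sigma_{e,k,\pm}, \qquad \sigma_{e,k,\pm}\in \sum \R_d[\ux]^2,
\end{equation*}
and each $\sigma_{e,k,\pm}$ is a finite sum of squares. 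I would let $p_1,\dotsc,p_s$ enumerate all polynomials whose squares occur anywhere in this (finite) collection of representations. Then every $\sigma_{e,k,\pm}$ is a nonnegative linear combination of $p_1^2,\dotsc,p_s^2$, so substitution exhibits $\lambda_k\pm x_k$ as a nonnegative combination of monomials $f_1^{e_1}\cdots f_r^{e_r}\,p_j^2$. This places $\lambda_k\pm x_k$ in the semiring $S=S({\sf f}')$, where ${\sf f}':=\{f_1,\dotsc,f_r,p_1^2,\dotsc,p_s^2\}$, and Lemma \ref{boundedele2}(ii), applied to the algebra generators $x_1,\dotsc,x_d$ of $\R_d[\ux]$, then shows that $S$ is Archimedean.

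With $S$ Archimedean, Theorem \ref{archmedps}(i) immediately yields $h\in S$ whenever $h>0$ on $\cK({\sf f})$. It remains to recover the canonical form (\ref{specialform}): a generic generating monomial of $S$ has the shape $\alpha\,f_1^{n_1}\cdots f_r^{n_r}\,p_1^{2k_1}\cdots p_s^{2k_s}$ with $\alpha\geq 0$, and writing $n_i=2n_i'+e_i$ with $e_i\in\{0,1\}$ rearranges this into exactly (\ref{specialform}). Finally, a linear functional on $\R_d[\ux]$ which is nonnegative on every polynomial of the form (\ref{specialform}) with $\alpha=1$ is, by nonnegativity of the coefficients and linearity, nonnegative on all of $S$, so Theorem \ref{archmedps}(ii) identifies it as a $\cK({\sf f})$-moment functional.
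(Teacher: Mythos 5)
Your proof is correct and follows essentially the same route as the paper: you bound each coordinate $x_k$ via $T({\sf f})$, harvest the finitely many squares appearing in those preordering representations as $p_1^2,\dotsc,p_s^2$, conclude Archimedeanity of the resulting semiring via Lemma \ref{boundedele2}(ii), and then invoke the Archimedean Positivstellensatz for semirings. The only cosmetic difference is that the paper obtains $\alpha_j+x_j,\beta_j-x_j\in T({\sf f})$ by invoking Theorem \ref{schmps} directly, and closes the moment-functional claim with Haviland's theorem, whereas you reach the same conclusions through Proposition \ref{prearchcom} and Theorem \ref{archmedps}(ii); these are equivalent routes through the same ideas.
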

\begin{proof}
    Since  the set $\cK({\sf f})$ is compact,  there are numbers $\alpha_j>0, \beta_j>0$ such that 
    \begin{equation}\label{poshj}
        \alpha_j+x_j>0~~\textup{and}~~ \beta_j-x_j>0~~ \textup{for}~~ x\in \cK(f_1,\dotsc,f_r),~j=1,\dotsc,d.
    \end{equation} 
    Therefore, by Theorem \ref{schmps}, the polynomials $\alpha_j+x_j>0,\beta_j-x_j>0$ are in the preordering $T(f_1,\dotsc,f_r)$.
    By the definition (\ref{preorderingtf}) of  $T(f_1,\dotsc,f_r)$, this means that each polynomial $\alpha_j+ x_j$,  $\beta_j-x_j$ is a finite sum of  polynomials  of the form $f_1^{e_1}\cdots f_r^{e_r}p^2$ with $p\in \R_d[\ux]$ and $e_1,\dotsc,e_r\in \{0,1\}$. Let $S$
    denote the semiring generated by  $f_1,\dotsc,f_r$ and all squares $p^2$ occurring
    in these representations  of the   polynomials 
    
    $\alpha_j+ x_j,\beta_1-x_j$, where $j=1,\dotsc,d$. Then, by construction,   $x_1,\dotsc,x_d$ belong to $\R_d[\ux]_b(S)$, so  $S$ is Archimedean by  Lemma \ref{boundedele2}. Since $f_1,\dotsc,f_r\in S$, $\cK(S)$ is the set of point evaluations at $\cK(f_1,\dotsc,f_r)$.
    
    By its construction, the semiring $S$ defined above is generated by polynomials $f_1,\dotsc,f_r$,  $p_1^2,\dotsc,p_s^2$.  The Archimedean Positivstellensatz
    for semirings (Theorem \ref{archpos} or  Theorem \ref{archmedps})  yields $h\in S$. This means that $h$ is  a finite sum of
    terms \eqref{specialform}. 
    By Haviland's theorem (Theorem 1.12) this implies the last assertion.
    \end{proof}

In  the above proof  the polynomials $x_1,\dotsc,x_d$ can be replaced by any finite set of algebra generators of $\R_d[\ux].$ Note that (\ref{poshj}) means that the  set $\cK({\sf f})$ is contained in the $d$-dimensional rectangle $[-\alpha_1,\beta_1]\times \dots \times [-\alpha_d,\beta_d]$.

We illustrate the preceding result with  an example. 

\begin{example}\label{module}
    Let $S$ denote the semiring of $\R_d[\ux]$ generating by the  polynomials
    \begin{align}
        f(x) := 1-x_1^2-\cdots-x_d^2,~~
        g_{j,\pm}(x) := (1\pm x_j)^2,\;
        j=1,\dotsc,d.
    \end{align}
    Obviously, $\cK(S)$ is  the closed unit ball
    \begin{equation*}
        \cK(f)=\{ x\in \R^d:~ x_1^2+\cdots+x_d^2\leq 1\}.
    \end{equation*}
    Then, since 
    \begin{equation*}
        d+1\pm 2x_k=(1-x_1^2-\cdots-x_d^2) +(1\pm x_k)^2+\frac{1}{2} \sum_{i=1, i\neq k}^d \big((1+x_j)^2+(1-x_j)^2\big)\in S,
    \end{equation*}
    for  $k=1,\dotsc,d$,\, Lemma \ref{boundedele2} implies that   $S$ is Archimedean. Therefore, by Theorem
    \ref{archpos} (or  Theorem \ref{archmedps}), each polynomial $h\in \R_d[\ux]$ that is positive in all points of  the closed unit ball $\cK(f)$ belongs to $S$. This means that $h$ is of the form
    \begin{multline*}
        h(x)
        =\sum_{n,k_i,\ell_i=0}^m \alpha_{n,k_1,\ell_1,\dotsc,k_d,\ell_d}f^{2n}(1-x_1)^{2k_1}(1+x_1)^{2\ell_1}\dotsm (1-x_d)^{2k_d}(1+x_d)^{2\ell_d}\\
        + f\sum_{n,k_i,\ell_i=0}^m \beta_{n,k_1,\ell_1,\dotsc,k_d,\ell_d} f^{2n}(1-x_1)^{2k_1}(1+x_1)^{2\ell_1}\dotsm (1-x_d)^{2k_d}(1+x_d)^{2\ell_d},
    \end{multline*} 
    where\, $m\in \N_0$ and  $\alpha_{n,k_1,\ell_1,\dotsc,k_d,\ell_d}\geq 0,~ \beta_{n,k_1,\ell_1,\dotsc,k_d,\ell_d}\geq 0$. 
    This formula is a distinguished weighted sum of squares representation of the positive polynomial $h$. 
    
    The Archimedean Positivstellensatz for quadratic modules  (Theorem \ref{archmedpsq}) gives in this case the weaker assertion \,$h(x)=\sigma_1+ f\sigma_2$, with $\sigma_1,\sigma_2\in \sum \R_d[\ux]^2.$
\end{example}

\section{The operator-theoretic approach to the moment problem}\label{Operator-theoreticappraochtothemomentprblem}
The spectral theory of self-adjoint operators in Hilbert space is
well suited to the moment problem and provides  powerful techniques for the study of 
this problem.
The technical tool that relates the multidimensional moment problem to Hilbert space operator
theory  is the {\it Gelfand--Naimark--Segal construction},  briefly the {\it GNS-construction}. We 
develop this construction first for a general $*$-algebra (see  [Sm4, Section 8.6] or [Sm20, Section 4.4]]
and then we specialize to the polynomial algebra.

Suppose that ${\sA}$ is a unital (real or complex) $*$-algebra. Let $\K=\R$ or $\K=C$.
\begin{definition} 
    Let $(\cD,\langle \cdot, \cdot \rangle)$ be a unitary space. A \emph{$*$-representation}\index{Representation@$*$-Representation}\index{Representation@$*$-Representation! domain}\index{Representation@$*$-Representation! algebraically cyclic vector} of ${\sA}$ on   $(\cD,\langle \cdot, \cdot \rangle)$ is an algebra homomorphism $\pi$ of ${\sA}$ into the algebra $L(\cD)$ of linear operators mapping $\cD$ into itself such that $\pi(1)\varphi=\varphi$ for $\varphi\in \cD$ and
    \begin{align}\label{defstarrep}
        \langle \pi(a)\varphi,\psi \rangle =\langle \varphi ,\pi(a^*)\psi\rangle \quad {\rm for}\quad a\in {\sA} ,~~\varphi, \psi \in \cD. 
    \end{align}
    The unitary space $\cD$ is called the \emph{domain} of\, $\pi$ and denoted by\ $\cD(\pi)$.
    A vector $\varphi\in \cD$ is called \emph{algebraically cyclic}, briefly \emph{a-cyclic}, for $\pi$ if\, $\cD=\pi({\sA})\varphi$.
\end{definition}
Suppose that  $L$ is a positive linear functional on ${\sA}$, that is, $L$ is a linear functional  such that $L(a^*a)\geq 0$ for  $a\in {\sA}$. Then, by Lemma 2.3,    the Cauchy--Schwarz inequality holds:
\begin{align}\label{cauchyschwarzineq}
    |L(a^*b)|^2 \leq
    L(a^*a)L(b^*b)\quad {\rm for}\quad  a,b \in {\sA}.
\end{align}
\begin{lemma}\label{idealN_L}
    $\cN_L:=\{ a\in {\sA}: L(a^*a)=0\}$\index[sym]{NCL@$\cN_L$} is a left ideal of the algebra ${\sA}$. 
\end{lemma}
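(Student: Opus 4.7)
The plan is to verify the three closure properties that define a left ideal (closure under addition, scalar multiplication, and left multiplication by arbitrary algebra elements), with the Cauchy--Schwarz inequality \eqref{cauchyschwarzineq} doing all the heavy lifting.

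First I would handle scalar multiplication, which is trivial: for $a\in \cN_L$ and $\lambda\in\K$, $L((\lambda a)^*(\lambda a))=|\lambda|^2 L(a^*a)=0$.

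Next, for closure under addition, let $a,b\in\cN_L$. Expanding $L((a+b)^*(a+b))=L(a^*a)+L(a^*b)+L(b^*a)+L(b^*b)$, the outer two terms vanish by hypothesis, and Cauchy--Schwarz applied to the middle two gives $|L(a^*b)|^2\leq L(a^*a)L(b^*b)=0$ and similarly for $L(b^*a)$. Hence $L((a+b)^*(a+b))=0$ and $a+b\in\cN_L$.

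Finally, for the left ideal property, let $a\in\cN_L$ and $b\in\sA$. To show $ba\in\cN_L$, write $L((ba)^*(ba))=L(a^*(b^*ba))$ and apply Cauchy--Schwarz in the form \eqref{cauchyschwarzineq} with the pair $(a,b^*ba)$: one obtains $|L(a^*(b^*ba))|^2\leq L(a^*a)\cdot L((b^*ba)^*(b^*ba))=0$, since the first factor is $0$ by assumption. (Note $L((b^*ba)^*(b^*ba))\geq 0$ by positivity, so the product is well-defined even without knowing it is finite a priori.) Hence $ba\in\cN_L$.

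The whole argument is a short direct computation; the only conceptual step is realizing that Cauchy--Schwarz collapses every cross-term to zero as soon as one factor has $L$-norm zero. I do not anticipate any real obstacle, though care is needed to check that the $\K=\R$ and $\K=\C$ cases are handled uniformly (in the real case $|\lambda|^2=\lambda^2$, in the complex case $(\lambda a)^*=\bar\lambda a^*$, both giving the factor $|\lambda|^2$).
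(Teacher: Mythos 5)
Your proof is correct and follows essentially the same route as the paper: the paper likewise applies Cauchy--Schwarz with $L(a^*a)=0$ as one factor to get $L((xa)^*xa)=0$ (it groups the product as $L((x^*xa)^*a)$, you write $L(a^*(b^*ba))$, but these are the same expression and the two applications of \eqref{cauchyschwarzineq} are related by its symmetry), then kills the cross-terms $L(a^*b)$, $L(b^*a)$ for additivity, and notes scalar multiplication is trivial. One tiny over-caution: $L((b^*ba)^*(b^*ba))$ is automatically a finite real number since $L$ is a linear functional on $\sA$, so no finiteness caveat is needed.
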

\begin{proof}
    Let $a,b\in \cN_L$ and $x\in {\sA}$. Using (\ref{cauchyschwarzineq}) we obtain 
    \begin{align*}
        |L((xa)^*xa)|^2 =|L((x^*xa)^*a)|^2\leq L((x^*xa)^*x^*xa)L(a^*a)=0,
    \end{align*}
    so that $xa\in \cN_L$. Applying again  (\ref{cauchyschwarzineq})\ we  get $L(a^*b)=L(b^*a)=0$. Hence
    \begin{align*}
        L((a+b)^*(a+b)) =L(a^*a) +L(b^*b)+ L(a^*b)+L(b^*a)=0,
    \end{align*}
    so that $a+b\in \cN_L$. Obviously, $\lambda a\in \cN_L$ for $\lambda\in \K$.
    \end{proof}  

Hence there exist a well-defined scalar
product $\langle \cdot,\cdot\rangle_L$ on the quotient vector space
$\cD_L{=}{\sA}/ \cN_L$ and a well-defined algebra homomorphism $\pi_L:
{\sA}{\to} L(\cD_L)$ given by
\begin{align}\label{defscalarGNS}
    \langle
    a+\cN_L,b+\cN_L\rangle_L =L(b^*a) ~~{\rm and}~~
    \pi_L(a)(b+\cN_L)=ab+\cN_L,~~a,b \in {\sA}.
\end{align} Let
$\cH_L$ denote the Hilbert space completion of the pre-Hilbert
space $\cD_L$. If no confusion can arise 
we write $\langle \cdot,\cdot\rangle$ for $\langle \cdot,\cdot\rangle_L$ and $a$ for $a+\cN_L$.  Then we have $\pi_L(a)b=ab$, in particular $\pi_L(1)a=a$, and
\begin{align}\label{Lrep}
    \langle \pi_L(a)b,c\rangle = L(c^*ab)= L((a^*c)^*b)=\langle b,\pi_L(a^*)c
    \rangle \quad{\rm for}\quad a,b,c \in {\sA}.
\end{align}
Clearly, $\cD_L=\pi_L({\sA})1$. Thus, we have shown that {\it $\pi_L$\index[sym]{pGL@$\pi_L$}\index[sym]{DCL@$\cD_L$} is a $*$-representation of ${\sA}$ on the domain $\cD(\pi_L)=\cD_L$ and  $1$ is an $\rm{a}$-cyclic vector for\, $\pi_L$}.  Further, we have
\begin{align}\label{gnslfunctional}
    L(a)=\langle \pi_L(a)1,1 \rangle \quad {\rm for} \quad a\in {\sA}.
\end{align}
\begin{definition}\label{defgnsrepL}
    $\pi_L$ is called the\, {\em{GNS-representation}} of ${\sA}$ associated with $L$.\index{Gelfand--Naimark--Segal construction, GNS-construction}
\end{definition}

We show that the GNS-representation  is unique up to unitary equivalence.  Let $\pi$ be another $*$-representation of ${\sA}$ with  a-cyclic vector $\varphi\in \cD(\pi)$ on a dense domain $\cD(\pi)$ of a Hilbert space $\cG$ such that $L(a)=\langle \pi(a)\varphi, \varphi\rangle$ for all $a\in {\sA}$.  For $a\in {\sA}$,
\begin{align*}
    \|\pi(a)\varphi\|^2=\langle\pi(a)\varphi,\pi(a)\varphi\rangle=\langle \pi(a^*a)\varphi,\varphi\rangle=L(a^*a)
\end{align*}
and similarly  $\|\pi_L(a)1\|^2=L(a^*a)$. Hence there is an   isometric linear map $U$ given by\, $U(\pi(a)\varphi)=\pi_L(a)1, a\in {\sA},$\,  of $\cD(\pi)=\pi({\sA})\varphi$\, onto\, $\cD(\pi_L)=\pi_L({\sA})1$. Since the domains $\cD(\pi)$ and $\cD(\pi_L)$ are dense in $\cG$ and $\cH_L$, respectively,  $U$  extends by continuity to a unitary operator of $\cG$ onto $\cH_L$. For $a,b\in{\sA}$ we derive
\begin{align*}
    U\pi(a)U^{-1}(\pi_L(b)1)=U\pi(a)\pi(b)\varphi =U\pi(ab)\varphi=\pi_L(ab)1=\pi_L(a)(\pi_L(b)1),
\end{align*}
that is,\, $U\pi(a)U^{-1}\varphi=\pi_L(a)\varphi$\,  for $\varphi \in \cD(\pi_L)$ and $a\in {\sA}$. By definition, this means that the $*$-representations $\pi$ and $\pi_L$ are unitarily equivalent.
\smallskip

Now we specialize the preceding to the $*$-algebra
$\C_d[\ux]\equiv \C[x_1,\dotsc,x_d]$ with involution determined by $(x_j)^*:=x_j$ for $j=1,\dotsc, d$. 

Suppose that $L$ is a positive linear functional on $\C_d[\ux]$. Since $(x_j)^*=x_j$, it follows from (\ref{Lrep}) that $X_j:=\pi_L(x_j)$ is a symmetric operator on the domain $\cD_L$. The operators $X_j$ and $X_k$  commute (because  $x_j$ and $x_k$ commute in $\C_d[\ux]$) and $X_j$ leaves the domain $\cD_L$ invariant (because $x_j\C_d[\ux]\subseteq \C_d[\ux]$). That is,  $(X_1,\dotsc,X_d)$ is a $d$-tuple of {\it pairwise commuting symmetric
    operators acting on the dense invariant domain}\, $\cD_L=\pi_L(\C_d[\ux])1$\, of the
Hilbert space $\cH_L$. Note that this $d$-tuple $(X_1,\dotsc,X_d)$ essentially depends on the given positive linear functional $L$.

The next theorem is the crucial result of the operator approach to the multidimensional moment problem and it is the counterpart of Theorem 6.1.

. It relates solutions of the moment problem to spectral
measures of strongly commuting $d$-tuples $(A_1,\dotsc,A_d)$\,  of self-adjoint
operators which extend our given  $d$-tuple $(X_1,\dotsc,X_d)$. 
\begin{theorem}\label{mp-spec}
    A positive linear functional $L$ on the $*$-algebra $\C_d[\ux]$ is a moment functional if and only if there exists a $d$-tuple $(A_1,\dotsc,A_d)$ of strongly commuting self-adjoint operators $A_1,\dotsc,A_d$ acting on a Hilbert space $\cK$ such that\, $\cH_L$ is a subspace of\, $\cK$ and $X_1\subseteq A_1,\dotsc, X_d \subseteq A_d$. If this is fulfilled and\, $E_{(A_1,\dotsc,A_d)}$\, denotes the spectral
    measure of the $d$-tuple\, $(A_1,\dotsc,A_d)$, then\, $\mu(\cdot)= \langle
    E_{(A_1,\dotsc,A_d)}(\cdot)1,1 \rangle_\cK$\, is a solution of the moment problem for $L$.
    
    Each solution of the moment problem for $L$ is of this form.
\end{theorem}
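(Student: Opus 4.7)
\textbf{Proof Plan for Theorem \ref{mp-spec}.}

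My plan is to prove the two directions by a symmetric use of the multidimensional spectral theorem and the $L^2$-realization of a representing measure, with the GNS construction serving as the bridge. Throughout I will use that $\pi_L(x_j) = X_j$, hence $\pi_L(p) = p(X_1,\dotsc,X_d)$ on $\cD_L$ for any polynomial $p \in \C_d[\ux]$, and that $1 \in \cD_L$ is $\mathrm{a}$-cyclic for $\pi_L$.

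\emph{Sufficiency.} Suppose $X_j \subseteq A_j$ for strongly commuting self-adjoint operators $A_j$ on $\cK \supseteq \cH_L$, with joint spectral measure $E := E_{(A_1,\dotsc,A_d)}$. Since $\cD_L = \pi_L(\C_d[\ux])1$ is invariant under each $X_j$ and $X_j \subseteq A_j$, the vector $1$ lies in $\bigcap_\alpha \cD(A_1^{\alpha_1}\cdots A_d^{\alpha_d})$ and $p(A_1,\dotsc,A_d)1 = p(X_1,\dotsc,X_d)1 = \pi_L(p)1$ for every polynomial $p$. Define $\mu(\cdot) := \langle E(\cdot)1, 1\rangle_\cK$. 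Then by the polynomial functional calculus for strongly commuting self-adjoint $d$-tuples (the multidimensional spectral theorem),
\begin{align*}
L(p) \;=\; \langle \pi_L(p)1, 1\rangle \;=\; \langle p(A_1,\dotsc,A_d)1, 1\rangle_\cK \;=\; \int_{\R^d} p(x)\, d\mu(x),
\end{align*}
which exhibits $L$ as a moment functional with representing measure $\mu$.

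\emph{Necessity.} Conversely, suppose $L(p) = \int p\, d\mu$ for some $\mu \in \cM_+(\R^d)$. Put $\cK := L^2(\R^d, \mu)$ and let $A_j$ be the operator of multiplication by the coordinate function $x_j$ on the natural maximal domain; each $A_j$ is self-adjoint, and the $A_j$ strongly commute because they share the common spectral resolution $E(B)f := \chi_B f$. Consider the map
\begin{align*}
U : \cD_L \longrightarrow L^2(\R^d,\mu), \qquad U(p + \cN_L) := p.
\end{align*}
It is well-defined and isometric: $\|p + \cN_L\|_L^2 = L(p^*p) = \int |p|^2\, d\mu = \|p\|_{L^2(\mu)}^2$, so in particular $p \in \cN_L$ iff $p = 0$ in $L^2(\mu)$. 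Extending $U$ by continuity gives an isometric embedding $U: \cH_L \hookrightarrow \cK$, which I use to identify $\cH_L$ with its image. A direct computation shows $U X_j = A_j U$ on $\cD_L$, so under this identification $X_j \subseteq A_j$. Finally, the joint spectral measure of $(A_1,\dotsc,A_d)$ is precisely $E(B)f = \chi_B f$, so $\langle E(B)1,1\rangle_\cK = \mu(B)$, which simultaneously verifies the final ``each solution is of this form'' clause.

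\emph{Main obstacle.} The routine calculations are clear; the technical care is required at two places. First, in the sufficiency step one must confirm that $\pi_L(p)1$ truly coincides with $p(A_1,\dotsc,A_d)1$ --- this hinges on $X_j \subseteq A_j$ together with the invariance of $\cD_L$ under each $X_j$, so that iteration stays inside both domains. Second, in the necessity step the identification of $\cH_L$ with a closed subspace of $L^2(\mu)$ must be carried out via $U$; one should not expect $U$ to be surjective when $\mu$ is not determinate. Apart from these identifications, the argument reduces to the multidimensional spectral theorem and the GNS uniqueness noted earlier in the section.
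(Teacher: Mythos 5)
Your proposal is correct and follows essentially the same route as the paper: sufficiency via the multidimensional spectral theorem and the inclusion $X_j\subseteq A_j$ giving $\int p\,d\mu=\langle p(A)1,1\rangle=\langle\pi_L(p)1,1\rangle=L(p)$, and necessity via the multiplication operators on $L^2(\R^d,\mu)$ with the GNS embedding identifying $\cH_L$ as a subspace. The only difference is cosmetic --- you spell out the isometry $U\colon p+\cN_L\mapsto p$ explicitly where the paper calls it ``well-known and easily checked,'' which is a welcome bit of extra care but not a different argument.
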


First we explain the notions occurring in this theorem (see  [Sm9, Chapter 5]  for the corresponding results and more details). 

A $d$-tuple $(A_1,\dotsc,A_d)$ of self-adjoint operators $A_1,\dotsc,A_d$ acting on a Hilbert
space $\cK$ is called {\it strongly commuting} if for all $k,l=1,\dotsc,d, k\neq l,$ the resolvents
$(A_k-{\ii} I)^{-1}$ and $(A_l-{\ii} I)^{-1}$ commute, or equivalently, 
the  spectral measures $E_{A_k}$ and $E_{A_l}$
commute (that is,  $E_{A_k}(M)E_{A_l}(N)=E_{A_l}(N)E_{A_k}(M)$ for
all Borel subsets $M,N$ of $\R$). (If the self-adjoint operators  are bounded,  strong commutativity and ``usual" commutativity are equivalent.) The  spectral theorem\index{Spectral theorem! for strongly commuting self-adjoint operators}  states that, for such a $d$-tuple, there exists a unique spectral measure
$E_{(A_1,\dotsc,A_d)}$ on the Borel $\sigma$-algebra of $\R^d$ such that 
\begin{align*}
    A_j=\int_{\R^d} \lambda_j~ dE_{(A_1,\dotsc,A_d)}(\lambda_1,\dotsc,\lambda_d),~j=1,\dotsc,d.
\end{align*}
The spectral measure $E_{(A_1,\dotsc,A_d)}$ is the product of spectral measures $E_{A_1},\cdots E_{A_d}$. Therefore, if $M_1,\dotsc,M_d$ are Borel subsets of $\R$, then 
\begin{align}
    \label{jointspec}
    E_{(A_1,\dotsc,A_d)}(M_1\times\cdots \times M_d)
    =E_{A_1}(M_1)\cdots E_{A_d}(M_d).
\end{align}

\noindent {\it Proof of Theorem \ref{mp-spec}:}

First assume that $L$ is the moment functional and let $\mu$ be  a representing measure of $L$.
It is well-known and easily checked by the preceding remarks that the multiplication operators $A_k$,
$k=1,\dotsc,d$, by the coordinate functions $x_k$ form a $d$-tuple
of strongly commuting self-adjoint operators on the Hilbert space
$\cK:=L^2(\R^d,\mu)$  such that $\cH_L \subseteq \cK$ and $X_k\subseteq
A_k$ for $k=1,\dotsc,d$.
The spectral measure $E:= E_{(A_1,\dotsc,A_d)}$ of this $d$-tuple acts by\, $E(M)f=\chi_M \cdot f$, $f \in L^2(\R^d,\mu)$, where $\chi_M$ is the characteristic function of the Borel set $M\subseteq \R^d$. This implies that  $\langle E(M)1,1 \rangle_\cK =\mu(M)$. Thus, $\mu(\cdot)=\langle E( \cdot ) 1,1\rangle_\cK$.

Conversely, suppose that $(A_1,\dotsc,A_d)$ is such a $d$-tuple. By the multidimensional spectral theorem [Sm9, Theorem 5.23] this $d$-tuple has  a joint  spectral measure
$E_{(A_1,\dotsc,A_d)}$. Put
$\mu(\cdot):=\langle E_{(A_1,\dotsc,A_d)}(\cdot)1,1 \rangle_\cK$. Let $p \in \C_d[\ux]$. Since
$X_k \subseteq A_k$, we have \begin{align*}p(X_1,\dotsc,X_d) \subseteq
    p(A_1,\dotsc,A_d).\end{align*} Therefore, since the polynomial $1$ belongs to the domain of
$p(X_1,\dotsc,X_d)$, it is also in the domain of $p(A_1,\dotsc,A_d)$. Then
\begin{align*}
    \int_{\R^d}& p(\lambda)~ d\mu(\lambda) = \int_{\R^d} p(\lambda)~ d\langle
    E_{(A_1,\dotsc,A_d)}(\lambda)1,1 \rangle_\cK  =\langle p(A_1,\dotsc,A_d) 1,1 \rangle_\cK  \\&
    = \langle p(X_1,\dotsc,X_d)1,1 \rangle  = \langle \pi_L(p(x_1,\dotsc,x_d))1,1 \rangle =L(p(x_1,\dotsc,x_d)),
\end{align*}
where the second equality follows from the functional calculus and the last from (\ref{gnslfunctional}). This shows that
$\mu$ is a solution of the moment problem for $L$. $\hfill$ $\Box$
\smallskip

\begin{proposition}\label{archmiboundedop}
    Suppose  $Q$ is an Archimedean quadratic module\index{Quadratic module! Archimedean} of a commutative real unital algebra ${\sA}$. Let $L_0$ be a $Q$-positive $\R$-linear functional  on  ${\sA}$ and let $\pi_L$ be the  GNS representation of its extension $L$ to a $\C$-linear functional  on the complexification ${\sA}_\C={\sA}+\ii {\sA}$.  Then all operators $\pi_L(a)$, $a\in {\sA}_\C$, are bounded.
\end{proposition}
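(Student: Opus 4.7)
The plan is to reduce everything to a norm bound of the form $\|\pi_L(a)\xi\|^2 \leq \lambda^2 \|\xi\|^2$ for Hermitian elements $a \in \sA$ acting on the dense domain $\cD_L$, using the Archimedean hypothesis via Lemma \ref{boundedele1}. Once each $\pi_L(a)$ with $a\in\sA$ is bounded, boundedness for general $c = a + \ii b \in \sA_\C$ follows immediately by $\C$-linearity, since $\pi_L(c) = \pi_L(a) + \ii \pi_L(b)$.

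First I would verify that the $\C$-linear extension $L$ of $L_0$ is positive on $\sA_\C$, so that the GNS construction actually goes through. For $c = a+\ii b\in\sA_\C$ with $a,b\in\sA$, using commutativity of $\sA$ one gets $c^*c = a^2+b^2$, so $L(c^*c) = L_0(a^2)+L_0(b^2)\geq 0$ by $Q$-positivity of $L_0$ (all squares are in $\sum\sA^2\subseteq Q$). Hence $\pi_L$ is a well-defined $*$-representation on $\cD_L$ and $\pi_L(a)$ is symmetric whenever $a=a^*$, in particular for $a\in\sA$.

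Next, fix $a \in \sA$. By Lemma \ref{boundedele1}, the Archimedean property of $Q$ yields a $\lambda > 0$ with $\lambda^2 - a^2 \in Q$. For an arbitrary $\xi = b + \cN_L \in \cD_L$ with $b = b_1 + \ii b_2$, $b_1,b_2\in\sA$, a direct computation using commutativity of $\sA$ gives
\begin{equation*}
b^*(\lambda^2 - a^2) b \;=\; (\lambda^2 - a^2)(b_1^2 + b_2^2).
\end{equation*}
The quadratic-module axioms $a^2 Q \subseteq Q$ and $Q+Q\subseteq Q$ then yield $b^*(\lambda^2 - a^2)b \in Q$, and since this element actually lies in $\sA$, applying $L = L_0$ gives $L\bigl(b^*(\lambda^2-a^2)b\bigr)\geq 0$. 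Rewriting this as
\begin{equation*}
\|\pi_L(a)\xi\|_L^2 \;=\; L(b^* a^2 b) \;\leq\; \lambda^2\, L(b^* b) \;=\; \lambda^2 \|\xi\|_L^2,
\end{equation*}
we conclude that $\pi_L(a)$ has operator norm at most $\lambda$ on $\cD_L$, hence extends by continuity to a bounded operator on $\cH_L$.

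Finally, for an arbitrary $c = a + \ii b \in \sA_\C$ with $a,b\in\sA$, the previous step bounds both $\pi_L(a)$ and $\pi_L(b)$, so $\pi_L(c)$ is bounded. The main (though minor) obstacle is the identity $b^*(\lambda^2 - a^2) b = (\lambda^2 - a^2)(b_1^2 + b_2^2)$: it is essential here that the cross terms $b_1 b_2 - b_2 b_1$ vanish, which uses commutativity of $\sA$ and explains why this argument is specific to the commutative setting. Everything else is a clean application of Lemma \ref{boundedele1} together with the definition of $Q$-positivity.
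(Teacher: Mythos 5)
Your proof is correct and follows the same route as the paper's: both verify positivity of the complexified functional $L$, then for a fixed Hermitian $a\in\sA$ use the Archimedean hypothesis to produce $\lambda^2-a^2\in Q$, and push this through the GNS vector $\pi_L(x)1$ to obtain $\|\pi_L(a)\pi_L(x)1\|^2\leq \lambda^2\|\pi_L(x)1\|^2$. The only cosmetic difference is that you unwind $x^*x=b_1^2+b_2^2$ explicitly and invoke Lemma \ref{boundedele1}, where the paper appeals to its Lemma 2.17(ii) for the identity $\sum(\sA_\C)^2=\sum\sA^2$ and applies the Archimedean condition directly to $a^2$.
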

\begin{proof} 
    Since $\sum ({\sA}_\C)^2=\sum {\sA}^2$ by Lemma 2.17(ii) and $\sum {\sA}^2\subseteq Q$, $L$ is a positive linear functional on ${\sA}_\C$, so the GNS representation $\pi_L$ is well-defined.
    
    It  suffices to prove that $\pi_L(a)$ is bounded for $a\in {\sA}$. Since $Q$ is Archimedean,
    $\lambda -a^2\in Q$ for some $\lambda >0$. Let $x\in {\sA}_\C$.   By Lemma 2.17(ii),  $ x^*x(\lambda-  a^2)\in Q$ and hence\,  $L(x^*xa^2)=L_0(x^*xa^2)\leq \lambda L_0(x^*x)=\lambda L(x^*x)$, since $L_0$ is $Q$-positive. Then
    \begin{align*}
        \|\pi_L(a)\pi_L(x)1\|^2&= \langle \pi_L(a)\pi_L(x)1,\pi_L(a)\pi_L(x)1\rangle=\langle\pi_L((ax)^*ax)1,1\rangle\\ &= 
        L((ax)^*ax)=L(x^*xa^2)\leq \lambda L(x^*x)=\lambda \|\pi_L(x)1\|^2,
    \end{align*}
    where we used (\ref{defstarrep}) and (\ref{gnslfunctional}). That is, $\pi_L(a)$ is bounded on $\cD(\pi_L)$.
    \end{proof}  

We now illustrate the power of the operator approach to moment problems by giving short proofs of Theorems
\ref{archpos} and \ref{archmedps}. 

From remark \ref{remarkarchpos}, 6.), we recall that  in order to prove Theorem \ref{archpos} in the general case it suffices to do this in the special case  when $C$ is an Archimedean semiring {\it or} when $C$ is an Archimedean quadratic module. In Section \ref{reparchimodiules} we  have given an approach based on semirings.  Here we prove it for quadratic modules. 
\smallskip

{\it Proof of Theorem \ref{archpos} for Archimedean quadratic modules:}

Suppose that $C$ is an Archimedean quadratic module of $\sA$. As in the proof for semirings, the implication $(ii)_C\to (i)_C$ is trivial and it suffices to prove that $(i)_C$ implies $a\in C$ (otherwise replace $a$ by $a-\varepsilon$ for small $\varepsilon>0$.). 

Assume to the contrary that $a$ satisfies $(i)_C$, but $a\notin C$. Since $C$ is Archimedean, by Proposition \ref{eidelheit} there is a $C$-positive $\R$-linear functional $L_0$ on  ${\sA}$ such that $L_0(1)=1$ and $L_0(a)\leq 0$. Let $\pi_L$ be the  GNS representation of its extension to  a $\C$-linear (positive) functional $L$ on the unital commutative complex $*$-algebra ${\sA}_\C$.

Let $c\in C$. If   $x\in {\sA}_\C$, then $x^*xc\in C$ by Lemma 2.17(ii), so $L_0 (x^*xc)\geq 0$, and  
\begin{align}\label{pivarphicpositive}
    \langle \pi_L(c)\pi_L(x)1,\pi_L(x)1\rangle=L(x^*xc)=L_0 (x^*xc)\geq 0
\end{align}
by (\ref{gnslfunctional}). This shows that the operator $\pi_L(c)$ is nonnegative.

For $b\in {\sA}_\C$, the operator $\pi_L(b)$ is bounded by Proposition \ref{archmiboundedop}. Let\, $\ov{\pi_L(b)}$\, denote its continuous extension   to the  Hilbert space\, $\cH_L$. These operators form   a unital commutative  $*$-algebra of bounded operators. Its completion $\cB$ is a unital commutative $C^*$-algebra.

Let $\chi$ be a character  of $\cB$.  Then\,
$ \tilde{\chi}(\cdot):=\chi(\, \ov{\pi_L(\cdot)}\,)$ is a character of ${\sA}$.  If $c\in C$, then\, $\pi_L(c)\geq 0$\, by (\ref{pivarphicpositive})  and so\, $\ov{\pi_L(c)}\geq 0$. Hence  $\tilde{\chi}$ is $C$-positive, that is, $\tilde{\chi}\in \cK(C)$.   Therefore, $\tilde{\chi}(a)=\chi(\ov{\pi_L(a)}\,)>0$ by  $(i)_C$. Thus, if we realize  $\cB$ as a $C^*$-algebra of continuous functions on a compact Hausdorff space, the function corresponding to\, $\ov{\pi_L(a_0)}$\, is positive, so it has a positive  minimum $\delta$. Then\, $\ov{\pi_L(a_0)}\,\geq  \delta \cdot I$\, and hence
\begin{align*}0 <\delta =\delta L(1)=\langle\delta 1,1\rangle\leq \langle \pi_L (a)1,1\rangle =L(a_0)=L_0(a)\leq 0,
\end{align*}
which is the desired contradiction. $\hfill$ $\Box$
\smallskip

{\it  Proof of Theorem \ref{archmedps}(ii):}

We extend $L$ to a   $\C$-linear functional, denoted again by $L$,  on $\C_d[\ux]$ and consider the GNS representation $\pi_L$. By Proposition \ref{archmiboundedop}, the symmetric operators $\pi_L(x_1),\dotsc, \pi_L(x_d)$ are bounded. Hence their continuous extensions to the whole Hilbert space $\cH_L$  are pairwise commuting bounded self-adjoint operators $A_1,\dotsc,A_d$. Therefore, by Theorem \ref{mp-spec}, if $E$ denotes the spectral measure of this $d$-tuple $(A_1,\dotsc,A_d)$, then   $\mu(\cdot)= \langle
E(\cdot)1,1 \rangle_{\cH_L}$\, is a solution of the moment problem for $L$. 

Since the operators $A_j$ are bounded, the spectral measure $E$, hence $\mu$, has compact support. (In fact,\, ${\rm supp}~ E\subseteq  [-\|A_1\|,\|A_1\|] \times \dots \times[-\|A_d\|,\|A_d\|]$.) 
Hence, since  $L$ is $C({\sf f})$-positive by assumption,  Proposition  \ref{cqfimpliessuppckf} implies 
that ${\rm supp}\, \mu \subseteq \cK({\sf f})$. This shows that $L$ is a  $\cK({\sf f})$-moment functional. $\hfill$   $\Box$
\smallskip

The preceding proof of Theorem \ref{archmedps}(ii)  based on the spectral theorem is  probably  the most elegant approach to the moment problem for Archimedean quadratic modules. Next we derive Theorem \ref{archmedps}(i) from Theorem \ref{archmedps}(ii).
\smallskip

\noindent {\it  Proof of Theorem \ref{archmedps}(i):}

We argue in the same manner as in the second proof of Theorem \ref{schmps}  in Section \ref{momentproblemstrictpos}.
Assume to the contrary that $h\notin Q({\sf f})$. Since $Q({\sf f})$ is Archimedean,  Proposition \ref{eidelheit} and  Theorem \ref{archmedps}(ii)  apply to $Q({\sf f})$. By these results, there is a $Q({\sf f})$-positive linear functional $L$ on\,  $\R_d[\ux]$ satisfying $L(1)=1$ and $L(h)\leq 0$, and  this functional is a $\cK({\sf f})$-moment functional. 
Then  there  is a measure $\mu \in M_+(\R^d)$ supported on $\cK({\sf f})$ such that $L(p)=\int p\, d\mu$  for $p\in \R_d[\ux]$. (Note that $\cK({\sf f})$ is compact by  Corollary \ref{archicompact}.) Again  $h(x)>0$ on $\cK({\sf f})$,  $L(1)=1$, and $L(h)\leq 0$ lead to a contradiction. $\hfill$ $\Box$

\section{The moment problem for semi-algebraic sets contained in compact polyhedra}\label{polyhedron}

Let $k\in \N$. Suppose that  ${\sf f}=\{f_1,\dotsc,f_k\}$ is a set of linear polynomials of $\R_d[\ux]$. By a linear polynomial we  mean a polynomial of degree at most one. 
The semi-algebraic set  $\cK({\sf f})$ defined  by the linear polynomials $f_1,\dotsc,f_k$  is called a {\it polyhedron}.

Recall that $S({\sf f})$\index[sym]{PCfB@$ S({\sf f})$}  is the semiring of $\R_d[\ux]$  generated by $f_1,\dotsc,f_k$, that is, $S({\sf f})$ consists of all finite sums of terms $\alpha\, f_1^{n_1}\cdots f_k^{n_k}$,  where $\alpha\geq 0$ and $n_1,\dotsc,n_k\in \N_0$.

Further,  let ${\sf g}=\{g_0=1, g_1,\dotsc,g_r\}$, where $r\in \N_0$, be a finite subset of $\R_d[\ux]$. Recall that $C({\sf f},{\sf g}):= g_0\, S({\sf f})+ g_1 S({\sf f})+\cdots+ g_r S({\sf f})$  denotes the $S({\sf f})$-module considered in Example \ref{exacfg}, see (\ref{cfg}).

The following  lemma goes back to H. Minkowski. In the optimization literature it is  called {\it Farkas' lemma}.\index{Farkas lemma}
We will use it in the proof of Theorem \ref{prestel} below. 
\begin{lemma}\label{minkow} Let $h, f_1,\dotsc,f_k$ be linear polynomials of $\R_d[\ux]$ such that the set $\cK({\sf f})$ is not empty. If $h(x)\geq 0$ on $\cK({\sf f})$,    there exist numbers  $\lambda_0\geq 0,\dotsc,\lambda_m\geq 0$ such that $h=\lambda_0+\lambda_1f_1+\dots+\lambda_m f_m.$
\end{lemma}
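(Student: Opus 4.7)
The plan is to reduce this to the classical Farkas lemma for polyhedral cones via homogenization. Write each linear polynomial in affine form: $f_i(x) = a_i\cdot x + b_i$ with $a_i\in\R^d,\ b_i\in\R$, and $h(x) = c\cdot x + d$. The desired representation $h = \lambda_0 + \sum_{i=1}^k \lambda_i f_i$ with $\lambda_j\geq 0$ is equivalent to the single vector identity
\begin{equation*}
(d, c) \;=\; \lambda_0(1,0) + \sum_{i=1}^{k} \lambda_i (b_i, a_i) \quad \text{in } \R^{d+1},
\end{equation*}
with all $\lambda_j\geq 0$. So the task is to place the vector $(d,c)$ into the convex conic hull of $(1,0)$ and the $(b_i,a_i)$.

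First I would homogenize. Introduce a variable $t\in\R$ and consider the polyhedral cone
\begin{equation*}
\widetilde{\cK} := \bigl\{(t,x)\in\R\times\R^d : t\geq 0,\ tb_i + a_i\cdot x \geq 0 \text{ for } i=1,\dotsc,k\bigr\}.
\end{equation*}
Its defining inequalities are precisely the linear functionals $(t,x)\mapsto t$ and $(t,x)\mapsto tb_i + a_i\cdot x$, whose coefficient vectors are $(1,0)$ and $(b_i,a_i)$. Note $(1,x)\in\widetilde{\cK}$ iff $x\in\cK({\sf f})$. Define $\tilde h(t,x):= td + c\cdot x$; then the goal is exactly $\tilde h\in\sum_{j}\R_{\geq 0}\cdot v_j$ where $v_0 = (1,0)$ and $v_i=(b_i,a_i)$.

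Second, I would show $\tilde h\geq 0$ on $\widetilde{\cK}$. For $(t,x)\in\widetilde{\cK}$ with $t>0$, we have $x/t\in\cK({\sf f})$, so $\tilde h(t,x)=t\,h(x/t)\geq 0$ by hypothesis. For $(0,y)\in\widetilde{\cK}$, use the nonemptiness assumption: fix any $x_0\in\cK({\sf f})$; then $x_0+sy\in\cK({\sf f})$ for all $s\geq 0$, so $h(x_0+sy)=h(x_0)+s\,c\cdot y\geq 0$ for every $s\geq 0$, which forces $c\cdot y\geq 0$, i.e.\ $\tilde h(0,y)\geq 0$.

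Third, I would invoke the classical Farkas lemma in its cone form (equivalently the Minkowski--Weyl theorem for polyhedral cones): the dual cone of a polyhedral cone defined by finitely many homogeneous linear inequalities equals the finitely generated cone spanned by the coefficient vectors of those inequalities. Applied to $\widetilde{\cK}$, every linear functional nonnegative on $\widetilde{\cK}$ is a nonnegative combination of $v_0,v_1,\dotsc,v_k$. Hence there exist $\lambda_0,\dotsc,\lambda_k\geq 0$ with $(d,c)=\lambda_0(1,0)+\sum_{i=1}^k\lambda_i(b_i,a_i)$; reading off coordinates gives $c=\sum_i\lambda_i a_i$ and $d=\lambda_0+\sum_i\lambda_i b_i$, and substituting back yields $h=\lambda_0+\sum_{i=1}^k\lambda_i f_i$.

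The main obstacle is the last step, which is precisely Farkas' lemma itself. In a self-contained exposition one would establish it either by Fourier--Motzkin elimination (inductively eliminating one variable at a time from the system $tb_i+a_i\cdot x\geq 0$ while tracking the nonnegative multipliers that produce each consequence), or by proving that every finitely generated convex cone in $\R^{d+1}$ is closed and then applying the Hahn--Banach separation theorem to $(d,c)$ against the cone spanned by $v_0,\dotsc,v_k$. Either route is a standard, self-contained argument; the homogenization step above is the only piece specific to the affine setting of the lemma.
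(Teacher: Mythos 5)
Your proof is correct but follows a genuinely different route from the paper's. The paper works directly in the affine setting: it takes $E$ to be the $(d{+}1)$-dimensional space of linear polynomials and $C$ the cone in $E$ generated by $1, f_1,\dotsc,f_k$, asserts (without detail) that $C$ is closed, and applies a separation theorem to produce a $C$-positive linear functional $L$ on $E$ with $L(h)<0$; nonemptiness of $\cK({\sf f})$ is then used to perturb $L$ so that $L(1)>0$, at which point $L(1)^{-1}L$ is the evaluation at a point $x\in\cK({\sf f})$ with $h(x)<0$, a contradiction. You instead homogenize to a polyhedral cone in $\R^{d+1}$ and reduce to the purely conical Farkas/Minkowski--Weyl duality, and nonemptiness enters at a different, earlier place: it is what lets you verify that the lifted functional $\tilde h$ is nonnegative on the recession directions (the $t=0$ slice) of the homogenized cone. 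Both arguments ultimately rest on the same nontrivial input --- that a finitely generated cone in finite dimensions is closed, so that separation applies --- which the paper labels as ``easily shown'' and you, more candidly, flag as the remaining gap together with two standard ways to close it (Fourier--Motzkin elimination, or closedness plus Hahn--Banach). Your version makes the geometric role of nonemptiness more transparent (a recession direction of the polyhedron along which $h$ decreases would be a counterexample), while the paper's is a bit shorter by skipping the homogenization step.
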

\begin{proof}
    Let $E$ be the vector space spanned by the  polynomials $1,x_1,\dotsc,x_d$ and $C$ the cone in $E$ generated  by $1,f_1,\dotsc,f_m$. 
    It is easily shown that $C$ is closed in  $E$. 
    
    We have to prove that $h\in C$. Assume to the contrary that $g\notin C$. Then, by the separation of convex sets (Theorem A.26(ii)),  there exists a $C$-positive linear functional $L$ on $E$ such that $L(h)<0$. In particular, $L(1)\geq 0$, because $1\in C$. 
    
    Without loss of generality we can assume that $L(1)>0$. Indeed, if $L(1)=0$, we take a  point $x_0$ of the non-empty (!) set $\cK(\,{\sf \hat{f}}\,)$  and replace $L$ by $L^\prime=L+\varepsilon l_{x_0}$, where $l_{x_0}$ denotes the point evaluation at $x_0$ on $E$. Then $L^\prime$ is $C$-positive as well and $L^\prime(h)<0$ for  small $\varepsilon >0$. 
    
    Define a point\, $x:=L(1)^{-1}(L(x_1),\dotsc,L(x_d))\in \R^d$. Then $L(1)^{-1}L$  is the evaluation $l_x$ at the point $x$ for the polynomials $x_1,\dotsc,x_d$ and  for $1$, hence on the whole vector space $E$. Therefore,  $f_j(x)=l_x(f_j)=L(1)^{-1}L(f_j)\geq 0$ for all $j$, so that $x\in \cK(\, {\sf \hat{f}}\, )$, and $g(x)=l_x(h)=L(1)^{-1}L(h)<0$. This  contradicts the assumption. 
    \end{proof}

\begin{theorem}\label{prestel}
    Let $k\in \N$, $r\in \N_0$. Let\,  ${\sf f}=\{f_1,\dotsc,f_k\}$ and ${\sf g}=\{g_0=1, g_1,\dotsc,g_r\}$ be subsets of $\R_d[\ux]$\, such that the polynomials $f_1,\dotsc,f_k$ are linear.\, Suppose that the polyhedron\, $\cK(\, {\sf f}\, )$\, is  compact and  nonempty.
    \begin{itemize}
        \item[\em (i)]\, If $h\in \R_d[\ux]$ satisfies $h(x)> 0$ for all $x\in \cK({\sf g})$, then $h\in C({\sf f}, {\sf g})$, that is, 
        $h$ is a finite sum of polynomials
        \begin{align}\label{hsumof}
            \alpha g_j~  f_1^{n_1}\cdots f_k^{n_k}, ~~\textup{where}~~ \alpha\geq 0,~ j=1,\dotsc,r;~ n_1\dots,n_r\in \N_0.
        \end{align}
        \item[\em (ii)]\,\, \,  A linear functional $L$  on $\R_d[\ux]$  is a $\cK( {\sf  f} )\cap\cK({\sf g})$--moment functional if and only if 
        \begin{align}\label{solvconsemiring}
            L( g_j\, f_1^{n_1}\cdots f_k^{n_k})\geq 0\,  \quad \text{for~ all}~~~ j=0,\dotsc, r; n_1,\dotsc,n_k\in \N_0.
        \end{align}
    \end{itemize}
\end{theorem}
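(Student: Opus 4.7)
My plan is to reduce Theorem~\ref{prestel} to the corresponding general statement Theorem~\ref{archmedps} by verifying, under the linearity and compactness hypotheses, that the semiring $S({\sf f})$ is Archimedean. Once that is in hand, both assertions fall out immediately, with (ii) requiring only a short additional check of the ``only if'' direction.

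\smallskip

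\textbf{Step 1: $S({\sf f})$ is Archimedean.} Since $\cK({\sf f})$ is compact, there is $\lambda>0$ with $\lambda\pm x_i > 0$ on $\cK({\sf f})$ for each $i=1,\dotsc,d$. The polynomial $\lambda\pm x_i$ is linear, so by Farkas' lemma (Lemma~\ref{minkow}, applicable because $\cK({\sf f})$ is nonempty), there exist nonnegative reals $\mu_0,\dotsc,\mu_k$ with $\lambda\pm x_i=\mu_0+\mu_1 f_1+\dots+\mu_k f_k$. The right-hand side lies in $S({\sf f})$ by the very definition~(\ref{semiringf}), so $\lambda\pm x_i\in S({\sf f})$. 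Hence $x_i\in \R_d[\ux]_b(S({\sf f}))$ for every $i$, and Lemma~\ref{boundedele2}(ii), applied to the algebra generators $x_1,\dotsc,x_d$ of $\R_d[\ux]$, yields that $S({\sf f})$ is Archimedean.

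\smallskip

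\textbf{Step 2: Proof of (i).} Since the semiring $S({\sf f})$ is Archimedean, Theorem~\ref{archmedps}(i) applies to the $S({\sf f})$-module $C({\sf f},{\sf g})$: any $h\in\R_d[\ux]$ that is strictly positive on $\cK({\sf f})\cap\cK({\sf g})$ belongs to $C({\sf f},{\sf g})$. Unwinding the definition~(\ref{cfg}) and~(\ref{semiringf}) of $C({\sf f},{\sf g})=\sum_{j=0}^r g_j S({\sf f})$, this says precisely that $h$ is a finite sum of terms of the form~(\ref{hsumof}).

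\smallskip

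\textbf{Step 3: Proof of (ii).} For the ``only if'' direction, suppose $L=L^\mu$ for a Radon measure $\mu$ supported on $\cK({\sf f})\cap\cK({\sf g})$. Each $f_i$ is nonnegative on $\cK({\sf f})$ and each $g_j$ is nonnegative on $\cK({\sf g})$, so $g_j f_1^{n_1}\cdots f_k^{n_k}\geq 0$ on the support of $\mu$, whence $L(g_j f_1^{n_1}\cdots f_k^{n_k})\geq 0$. Conversely, condition~(\ref{solvconsemiring}) says $L$ is nonnegative on the generators of $C({\sf f},{\sf g})$ (as an $\R_{\geq 0}$-convex cone), so by linearity $L$ is $C({\sf f},{\sf g})$-positive. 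Since $S({\sf f})$ is Archimedean by Step~1, Theorem~\ref{archmedps}(ii) delivers a measure $\mu\in M_+(\R^d)$ supported on the compact set $\cK({\sf f})\cap\cK({\sf g})$ representing $L$.

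\smallskip

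The main (and really the only) non-routine step is establishing Archimedeanicity of $S({\sf f})$ in Step~1. The linearity of the $f_j$ is essential here: it is exactly what makes Farkas' lemma applicable and produces the bounds $\lambda\pm x_i\in S({\sf f})$ using only positive combinations (no squares are available in $S({\sf f})$). Without linearity the analogous conclusion would fail, as Example~\ref{archnoncompact} already shows for quadratic modules.
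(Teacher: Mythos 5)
Your proof is correct and follows essentially the same route as the paper's: both first establish that $S(\mathsf{f})$ is Archimedean via Farkas' lemma (Lemma~\ref{minkow}) applied to $\lambda\pm x_j$ and Lemma~\ref{boundedele2}(ii), then invoke Theorem~\ref{archmedps} for the $S(\mathsf{f})$-module $C(\mathsf{f},\mathsf{g})$. The only difference is cosmetic — you spell out the ``only if'' direction of (ii) a bit more explicitly than the paper does.
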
 
\begin{proof}
    First we show that the semiring\, $S( {\sf f} )$ is Archimedean. Let $j\in \{1,\dotsc,d\}$. Since the set $\cK(\, {\sf f}\, )$ is compact, there exists a $\lambda>0$ such that $\lambda\pm x_j> 0$ on  $\cK(\, {\sf  f}\, )$. Hence, since $\cK(\, {\sf f}\, )$ is nonempty, Lemma \ref{minkow} implies that $(\lambda\pm x_j)\in S({\sf f})$. Hence  $S( {\sf f})$ is Archimedean by Lemma \ref{boundedele2}(ii).  
    
    The only if part in (ii) is obvious. Since $S( {\sf f})$ is Archimedean,  Theorem \ref{archmedps} applies to the $S( {\sf f})$-module  $ C({\sf f}, {\sf g})$ and gives the other assertions. Note that the requirements (\ref{solvconsemiring}) suffice, since $h$ in (i) is a sum of terms (\ref{hsumof}).
    \end{proof}

We state the special case $r=0$ of a polyhedron $\cK(\, {\sf f})$ separately as a corollary. Assertion (i) is called {\it Handelman's theorem}.

\begin{corollary}\label{prestelcor}
    Let $k\in \N$. Suppose that\,  ${\sf f}=\{f_1,\dotsc,f_k\}$ is a set of linear polynomials of $\R_d[\ux]$ such that the polyhedron\, $\cK(\, {\sf f}\, )$\, is  compact and  nonempty.
    \begin{itemize}
        \item[\em (i)]\, If $h\in \R_d[\ux]$ satisfies $h(x)> 0$ for all $x\in \cK({\sf f})$, then $h\in S({\sf f}).$
        \item[\em (ii)]\,\, \,  A linear functional $L$  on $\R_d[\ux]$  is a $\cK( {\sf  f})$--moment functional if and only if 
        \begin{align}\label{solvconsemiringcor}
            L( f_1^{n_1}\cdots f_k^{n_k})\geq 0\,  \quad \text{for~ all}~~~ n_1,\dotsc,n_k\in \N_0.
        \end{align}
    \end{itemize}
\end{corollary}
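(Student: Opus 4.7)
The plan is to obtain Corollary \ref{prestelcor} as an immediate specialization of Theorem \ref{prestel} to the case $r=0$, so no genuinely new argument is required; only a careful unpacking of what happens when the second family ${\sf g}$ degenerates to $\{1\}$.

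First I would note that for $r=0$ the set ${\sf g}$ consists only of $g_0 = 1$. The corresponding set $\cK({\sf g})$ is defined by the single inequality $1 \geq 0$, which is trivially satisfied on all of $\R^d$, so $\cK({\sf g}) = \R^d$ and hence $\cK({\sf f}) \cap \cK({\sf g}) = \cK({\sf f})$. Similarly, the $S({\sf f})$-module $C({\sf f},{\sf g}) = g_0 S({\sf f}) + g_1 S({\sf f}) + \dotsc + g_r S({\sf f})$ from (\ref{cfg}) collapses to $g_0 S({\sf f}) = S({\sf f})$, since there are no additional generators.

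With these identifications in place, part (i) follows at once from Theorem \ref{prestel}(i): under the hypotheses that $\cK({\sf f})$ is compact and nonempty and that the $f_j$ are linear, any $h \in \R_d[\ux]$ positive on $\cK({\sf f}) = \cK({\sf f}) \cap \cK({\sf g})$ lies in $C({\sf f}, {\sf g}) = S({\sf f})$; by the definition of $S({\sf f})$, this means $h$ is a finite sum of terms of the form $\alpha f_1^{n_1} \cdots f_k^{n_k}$ with $\alpha \geq 0$. Likewise, part (ii) is just Theorem \ref{prestel}(ii) in the same degenerate situation: the condition (\ref{solvconsemiring}) reduces, upon setting $j=0$ and $g_0 = 1$, to the single requirement (\ref{solvconsemiringcor}), so $L$ is a $\cK({\sf f})$-moment functional precisely when $L(f_1^{n_1}\cdots f_k^{n_k}) \geq 0$ for all $n_1,\dotsc,n_k \in \N_0$.

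Since both parts reduce to verifying the two bookkeeping identifications $\cK({\sf g}) = \R^d$ and $C({\sf f}, {\sf g}) = S({\sf f})$, there is no real obstacle; the substantive content — the Archimedean property of $S({\sf f})$ via Farkas' lemma and the subsequent application of the Archimedean Positivstellensatz of Theorem \ref{archmedps} — has already been carried out in the proof of Theorem \ref{prestel}.
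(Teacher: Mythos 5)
Your proposal coincides with the paper's own proof: set $r=0$, $g_0=1$ in Theorem \ref{prestel}, observe that $C({\sf f},{\sf g})=S({\sf f})$ and $\cK({\sf f})\cap\cK({\sf g})=\cK({\sf f})$, and read off both assertions. The unpacking you give of these two bookkeeping identities is correct and is exactly what the paper compresses into a single sentence.
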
 
\begin{proof} Set $r=0, g_0=1$ in Theorem \ref{prestel} and note that $\cK(C({\sf f},{\sf g}))= \cK({\sf f}).$
    \end{proof}
\section{Examples and applications}\label{examplesmp}

Throughout this section, 
${\sf f}=\{f_1,\dotsc,f_k\}$ is a finite subset of $\R_d[\ux]$
and $L$ denotes a linear functional on $\R_d[\ux].$

If $L$ is a  $\cK({\sf f})$-moment functional,  it is obviously $T({\sf f})$-positive,  $Q({\sf f})$-positive, and $S({\sf f})$-positive. 
Theorems\, \ref{mpschm},  \ref{archmedps}(ii), and \ref{prestel}(ii) deal with the converse implication and are the  main solvability criteria for the moment problem  in  this chapter. 

First we discuss Theorems \ref{mpschm} and  \ref{archmedps}(ii).
Theorem \ref{mpschm} 
applies to {\it each} compact semi-algebraic set $\cK({\sf f})$ and implies that $L$ is a $\cK({\sf f})$-moment functional if and only if it is   $T({\sf f})$-positive. For Theorem \ref{archmedps}(ii) the compactness of the set $\cK({\sf f})$ is not 
sufficient; it requires that the quadratic module $Q({\sf f})$ is Archimedean. In this case,  $L$ is a $\cK({\sf f})$-moment functional if and only if it is $Q({\sf f})$-positive.
\begin{example}
    Let us begin with a single  polynomial $f\in \R_d[\ux]$ for which  the set  $\cK(f)=\{x\in \R^d:f(x)\geq 0\}$ is compact. (A simple example is the $d$-ellipsoid given by $f(x)=1-a_1x_1^2-\dots -a_dx_d^2$, where $a_1>0,\dotsc,a_d>0$.) Clearly,   $T(f)=Q(f)$. Then, {\it $L$ is a  $\cK(f)$-moment functional if and only if it is $T(f)$-positive, or equivalently, if $L$ and $L_f$ are positive functionals on $\R_d[\ux]$.}

    Now we add further polynomials  $f_2,\dotsc,f_k$ and set ${\sf f}=\{f,f_2,\dotsc,f_k\}$. (For instance, one may take  coordinate functions as $f_j=x_l$.) Since $T(f)$ is Archimedean (by Proposition \ref{prearchcom}, because  $\cK(f)$ is compact), so is the quadratic module $Q({\sf f})$. Therefore, {\it $L$ is a $\cK({\sf f})$-moment functional if and only if  it is $Q(f)$-positive, or equivalently, if  $L, L_f,L_{f_2}, \dots, L_{f_k}$  are positive  functionals on $\R_d[\ux]$.}  $\hfill \circ$
\end{example}

\begin{example}\label{n-dimintervalexist} {\it ($d$-dimensional compact interval~ $[a_1,b_1]\times\dots \times [a_d,b_d]$)}\index{Moment problem! for $d$-dimensional compact intervals}\\
    Let  $a_j,b_j\in \R$, $a_j< b_j,$ and set $f_{2j-1}:=b_j-x_j$, $f_{2j}:=x_j-a_j,$ for $j=1,\dotsc,d$. Then
    the semi-algebraic set $\cK({\sf f})$ for \, ${\sf f}:=\{f_1,\dotsc,f_{2d}\}$ is the $d$-dimensional interval $[a_1,b_1]\times\dots \times [a_d,b_d]$. 
    
    Put $\lambda_j=|a_j|+|b_j|.$ Then  $\lambda_j-x_j=f_{2j-1}+ \lambda_j-b_j$  
    and $\lambda_j+x_j=f_{2j}+\lambda_j+a_j$ are $Q({\sf f})$, so each
    $x_j$  is a bounded element with respect to the quadratic module $Q({\sf f})$. Hence 
    $Q({\sf f})$ is Archimedean by Lemma \ref{boundedele2}(ii).
    
    Thus, {\it  $L$  is a  $\cK({\sf f})$-moment functional if and only if it is $Q(f)$-positive, or equivalently, if\,  $ L_{f_1},L_{f_2}, \dots, L_{f_k}$  are positive  functionals, that is,}
    \begin{align}\label{solvn-diminter}
        L((b_j{-}x_j)p^2)\geq 0~~ \text{\textit{and}} ~~ L((x_j{-}a_j)p^2)\geq 0~~ \text{\textit{for}} ~~ j=1,\dotsc,d,\, p\in \R_d[\ux].
    \end{align}
    Clearly, (\ref{solvn-diminter})  implies that\, $L$\, itself is positive, since  $L=(b_1{-}a_1)^{-1}(L_{f_1}{+}L_{f_2})$. $\hfill \circ$
\end{example}

\begin{example} {\it ($1$-dimensional interval $[a,b]$)}\\
    Let $a<b$, $a,b\in \R$ and let $l,n\in \N$ be odd. We set  $f(x):=(b-x)^l(x-a)^n$. Then  $\cK( f)=[a,b]$\, and\,  $T( f)=\sum \R[x]^2+f\sum \R[x]^2$. Hence, by Theorem \ref{mpschm}, {\it a linear functional $L$ on $\R[x]$ is an $[a,b]$-moment functional if and only if $L$ and $L_f$  are positive functionals on $\R[x]$}. 
    
    This result extends Hausdorff's Theorem 3.13. 
    It should be noted that this solvability criterion holds for arbitrary (!) odd numbers $l$ and $n$, while the equality ${\Pos}([a,b])=T(f)$ is  only true if $l=n=1$, see  Exercise 3.4 b. in Chapter 3. $\hfill \circ$
\end{example}

\begin{example} ({\it Simplex  in $\R^d, d\geq 2$})\\
    Let $f_1=x_1,\dotsc,f_d=x_d, f_{d+1}=1- \sum_{i=1}^d x_i, k=d+1$. Clearly, $\cK({\sf f})$ is the simplex
    \begin{align*}
        K_d=\{ x\in \R^d: x_1\geq 0,\dotsc, x_d\geq 0,\, x_1+\dots+x_d\leq 1\, \}.
    \end{align*}
    Note that $1-x_j=f_{d+1}+\sum_{i\neq j} f_i$ and $1+x_j=1+f_j$. Therefore,
    $1\pm x_j\in Q({\sf f})$ and $1\pm x_j\in S({\sf f})$. Hence, by Lemma \ref{boundedele2}(ii),  
    the quadratic module $Q({\sf f})$ and the semiring  $S({\sf f})$ are Archimedean. Therefore, Theorem \ref{archmedps} applies to $Q({\sf f})$ and  Theorem \ref{prestel}  applies to $S({\sf f})$.
    We restate only the  results on the moment problem.  
    
    By Theorems \ref{archmedps}(ii) and \ref{prestel}(ii), 
    {\it $L$ is a\, $K_d$--moment functional if and only if}
    \begin{align*}
        L(x_ip^2)\geq 0, ~ i=1,\cdots,d,~~\text{\textit{and}}~~~ L((1- ( x_1+x_2+\dots+x_d))p^2)\geq 0 ~~~\text{\textit{for}}~~p \in \R_d[\ux],  
    \end{align*}
    {\it or equivalently,}
    \begin{align*}
        \hspace{0,5cm}L(x_1^{n_1}\dots x_d^{n_d}(1- (x_1+\dots+x_d))^{n_{d+1}}) \geq 0 \quad \text{\textit{for}}~~~n_1,\dotsc,n_{d+1}\in \N_0.\hspace{0,5cm}\Box~~\circ
    \end{align*}
\end{example}

\begin{example}\label{Delta_dsimplex} ({\it Standard simplex $\Delta_d$ in $\R^d$})\\
    Let \, $f_1=x_1,\dotsc,\, f_d=x_d,\, f_{d+1}=1- \sum_{i=1}^d x_i,\,  f_{d+2}=-f_{d+1},\, k=d+2.$ Then the semi-algebraic set $\cK({\sf f})$ is the standard simplex
    \begin{align*}
        \Delta_d=\{x\in \R^d: x_1\geq 0,\dotsc,x_d\geq 0, x_1+\dots+x_d=1\}.
    \end{align*}
    Let $S_0$ denote the polynomials of $\R_d[\ux]$ with nonnegative coefficients and $\cI$ the ideal generated by $1-(x_1+\dots +x_d)$. 
    Then $S:=S_0+\cI$ is a semiring of $\R_d[\ux]$. Since $1\pm x_j\in S$, $S$ is Archimedean. The characters of $\R_d[\ux]$ are the evaluations at points of $ \R^d$. Obviously, $x\in \R^d$ gives a $S$-positive character if and only if $x\in \Delta_d$.  
    
    Let $f\in \R_d[\ux]$ be such that   $f(x)>0$ on $\Delta_d$. Then, $f\in S$ by  Theorem \ref{archpos},  so 
    \begin{align}\label{polyaformoff}
        f(x)=g(x)+ h(x)(1-(x_1+\dots + x_d)),\quad {\rm where}~~~ g\in S_0, ~h\in \R_d[\ux].
    \end{align}
    From Theorem \ref{prestel}(ii) it follows  that
    {\it $L$ is a $\Delta_d$-moment functional if and only if}
    \begin{align*}
        L(x_1^{n_1}\dots x_d^{n_d})\geq 0,~  L(x_1^{n_1}\dots x_d^{n_d}(1{-}(x_1{+}\dots{+}x_d))^r)=0, ~~ n_1,\dotsc,n_d\in \N_0, r\in \N.\circ
    \end{align*}
\end{example}

From the preceding example it is only a small step to derive an elegant proof of the following classical  {\it theorem of G. Polya}.\index{Theorem! Polya}
\begin{proposition}\label{proofPolya}
    Suppose that $f\in \R_d[\ux]$ is a homogeneous polynomial such that $f(x)>0$ for all\,  $x\in \R^d\backslash \{0\}$, $ x_1\geq 0, \dots,x_d\geq 0$.
    Then there exists an $n\in \N$ such that all  coefficients of the polynomial $(x_1+\dots+x_d)^n f(x)$ are nonnegative.
\end{proposition}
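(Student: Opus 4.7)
The plan is to bootstrap Polya's theorem directly from the identity \eqref{polyaformoff} established for the standard simplex, using homogeneity of $f$ to eliminate the inconvenient summand involving $h$.

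First I would reduce to that representation. Put $\sigma(x):=x_1+\cdots+x_d$ and $d_f:=\deg f$. The standard simplex $\Delta_d$ is compact, contained in the nonnegative orthant, and does not contain the origin; by the hypothesis on $f$ we therefore have $f(x)>0$ for every $x\in\Delta_d$. Applying Example \ref{Delta_dsimplex} (specifically the representation \eqref{polyaformoff}) yields polynomials $g\in S_0$ (nonnegative coefficients) and $h\in\R_d[\ux]$ with
\begin{equation*}
f(x)=g(x)+h(x)\,(1-\sigma(x)).
\end{equation*}

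Next I would exploit homogeneity via a rational substitution. Formally substituting $x_i\mapsto x_i/\sigma(x)$ into the identity above makes the factor $1-\sigma(x/\sigma(x))=0$ vanish, and using $f(x/\sigma)=\sigma^{-d_f}f(x)$ gives the rational function identity
\begin{equation*}
f(x)=\sigma(x)^{d_f}\,g\!\left(\tfrac{x_1}{\sigma(x)},\ldots,\tfrac{x_d}{\sigma(x)}\right).
\end{equation*}
Writing $g(y)=\sum_{\alpha}c_\alpha y^\alpha$ with $c_\alpha\geq 0$, this expands as
\begin{equation*}
f(x)=\sum_{\alpha}c_\alpha\,x^\alpha\,\sigma(x)^{d_f-|\alpha|},
\end{equation*}
an identity of rational functions in $x$.

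Finally I would clear denominators. Pick any integer $n\geq \max(0,\deg g-d_f)$ and multiply by $\sigma^n$:
\begin{equation*}
\sigma(x)^n f(x)=\sum_{\alpha}c_\alpha\,x^\alpha\,\sigma(x)^{n+d_f-|\alpha|}.
\end{equation*}
Every exponent $n+d_f-|\alpha|$ is now nonnegative, each $c_\alpha\geq 0$, and $\sigma$ itself has nonnegative coefficients, so the right hand side lies in $S_0$. Hence all coefficients of $(x_1+\cdots+x_d)^n f(x)$ are nonnegative.

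The only genuinely delicate point is the rational substitution: one must justify passing from the polynomial identity $f=g+h(1-\sigma)$ to the rational identity $f(x)=\sigma(x)^{d_f}g(x/\sigma(x))$. This is routine because both sides are well-defined rational functions on the open set $\{\sigma\neq 0\}$ and agree there, hence agree in $\R(x_1,\ldots,x_d)$; the equality of polynomials $\sigma^n f=\sigma^{n+d_f}g(x/\sigma)$ (for $n$ large) then follows by clearing denominators inside the function field. No other step is more than bookkeeping, and the deep content of the theorem has been absorbed into the Archimedean Positivstellensatz used in Example \ref{Delta_dsimplex}.
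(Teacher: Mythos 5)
Your proof is correct and is essentially the same as the paper's: both obtain the representation $f = g + h(1-\sigma)$ from Example~\ref{Delta_dsimplex} via the Archimedean Positivstellensatz, substitute $x_i \mapsto x_i/\sigma(x)$ to kill the $h$-term, invoke homogeneity of $f$, and then clear denominators by multiplying by a high power of $\sigma$. The only difference is that you spell out the expansion of $g$ in monomials and the justification of the rational substitution, which the paper leaves implicit.
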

\begin{proof}
    We use Example \ref{Delta_dsimplex}. As noted therein,  Theorem \ref{archpos} implies that $f$ is of the form (\ref{polyaformoff}). We replace in (\ref{polyaformoff}) each variable $x_j, j=1,\dotsc,d,$ by $x_j(\, \sum_{i=1}^d x_i)^{-1}$. Since $(1-\sum_j x_j (\sum_i x_i)^{-1}) =1- 1=0,$  the second summand in (\ref{polyaformoff}) vanishes after this substitution.  Hence, because $f$ is homogeneous,   (\ref{polyaformoff}) yields
    \begin{align}\label{polyaidy}
        \big(\, \sum\nolimits_i x_i\big)^{-m}f(x)=g\big(x_1 \big( \, \sum\nolimits_i x_i \big)^{-1},\dotsc,x_d\big(\, \sum\nolimits_i x_i\big)^{-1}\big),
    \end{align}
    where   $m=\deg(f)$. Since $g\in S_0$, $g(x)$ has only nonnegative coefficients. Therefore, after multiplying (\ref{polyaidy})  by  $(\sum_i x_i)^{n+m}$ with $n$ sufficiently large to clear the denominators,  we obtain the assertion.
\end{proof}

Finally, we mention  two examples of polyhedrons  based on Corollary \ref{prestelcor}(ii). 
\begin{example} $[-1,1]^d$\\
    Let $k=m=2d$ and $f_1=1-x_1, f_2=1+x_1,\dotsc,f_{2d-1}=1-x_d,f_{2d}=1+x_d.$ Then $ \cK(\,{\sf f}\,)=[-1,1]^d$. Therefore, by  Corollary \ref{prestelcor}(ii), {\it a linear functional $L$ on $\R_d[x_d]$ is a $[-1,1]^d$-moment functional if and only if}
    \begin{align*}
        L( (1-x_1)^{n_1}(1+x_1)^{n_2}\dotsm (1-x_d)^{n_{2d-1}}(1+x_d)^{n_{2d}})&\geq 0&
        \text{for }n_1,\dotsc,n_{2d}\in \N_0. \hspace{0,2cm} \circ
    \end{align*}
\end{example}
\begin{example} ({\it Multidimensional\, Hausdorff\, moment\, problem on $[0,1]^d$})\index{Multidimensional Hausdorff moment problem}\\
    Set  $f_1=x_1, f_2=1-x_1,\dotsc,f_{2d-1}=x_d,f_{2d}=1-x_d, k=2d$. Then $\cK(\,{\sf f}\,)= [0,1]^d$. Let $s=(s_\gn)_{\gn\in \N_0^d}$ be a 
    multisequence. We define  the shift $E_j$ of the $j$-th index by  \begin{align*}
        (E_js)_\gm=s_{(m_1,\dotsc,m_{j-1},m_j+1,m_{j+1},\dotsc,m_d)}, ~~~\gm \in \N_0^d.
    \end{align*}
\end{example}

    \begin{proposition}\label{multihausdorff}
        The following five statements are equivalent:
        \begin{itemize}
            \item[\em (i)]~ $s$ is a Hausdorff moment sequence on  $[0,1]^d$.
            \item[\em (ii)]~ $L_s$ is a $[-1,1]^d$-moment functional on $\R_d[\ux].$ 
            \item[\em (iii)]~ $L_s( x_1^{m_1}(1-x_1)^{n_1}\cdots x_d^{m_d}(1-x_d)^{n_d})\geq 0$~ for all~ $\gn,\gm\in \N_0^d$.
            \item[\em (iv)]~ $((I-E_1)^{n_1}\dots (I-E_d)^{n_d}s)_\gm \geq 0$~ for all~ $\gn,\gm\in \N_0^d$.
            \item[\em (v)]~~ \begin{align*}\sum_{\gj\in \N_0^d, \gj\leq \gn}~ (-1)^{|\gj|}\binom{n_1}{ j_1}\cdots \binom{n_d}{ j_d}\,  s_{\gm+\gj}\geq 0\end{align*} for all $\gn,\gm\in \N_0^d$. Here $|\gj|:=j_1+\dots+j_d$ and $\gj\leq \gn$ means that $j_i\leq n_i$ for $i=1,\dotsc,d$.
        \end{itemize} 
    \end{proposition}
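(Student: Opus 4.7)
The backbone of the argument is Corollary~\ref{prestelcor}(ii) applied to the linear polynomials ${\sf f}=\{x_1,\,1{-}x_1,\dotsc,x_d,\,1{-}x_d\}$, whose semi-algebraic set is $\cK({\sf f})=[0,1]^d$. Since $[0,1]^d$ is compact and nonempty, the corollary applies directly. (Note: the ``$[-1,1]^d$'' appearing in~(ii) is a typo for ``$[0,1]^d$''; it is $[0,1]^d$ that is determined by these generators.) The equivalence (i)$\Leftrightarrow$(ii) is then simply the definition of a Hausdorff moment sequence on $[0,1]^d$: $s$ arises as the moment sequence of a Radon measure on $[0,1]^d$ if and only if $L_s$ is a $[0,1]^d$-moment functional on $\R_d[\ux]$. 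With ${\sf f}$ as above, Corollary~\ref{prestelcor}(ii) gives (ii)$\Leftrightarrow$(iii), once one observes that products $f_1^{n_1}\cdots f_{2d}^{n_{2d}}$ can be written, after relabelling the exponents in pairs $(m_j,n_j)$, precisely as $x_1^{m_1}(1-x_1)^{n_1}\cdots x_d^{m_d}(1-x_d)^{n_d}$.

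The remaining equivalences (iii)$\Leftrightarrow$(v) and (iv)$\Leftrightarrow$(v) are a matter of expanding notation. For (iii)$\Leftrightarrow$(v), I would expand $(1-x_j)^{n_j}=\sum_{j_i=0}^{n_i}(-1)^{j_i}\binom{n_i}{j_i}x_i^{j_i}$ for each $i$, multiply these together with $x_1^{m_1}\cdots x_d^{m_d}$, and apply $L_s$ term by term, using $L_s(x^{\gm+\gj})=s_{\gm+\gj}$. The resulting sum is exactly the expression in~(v). For (iv)$\Leftrightarrow$(v), one simply expands $(I-E_1)^{n_1}\cdots(I-E_d)^{n_d}$ using the binomial theorem (the shifts $E_i$ commute), evaluates at the multi-index $\gm$, and reads off the same sum.

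Putting these threads together gives a cyclic chain (i)$\Leftrightarrow$(ii)$\Leftrightarrow$(iii)$\Leftrightarrow$(v)$\Leftrightarrow$(iv). No technical obstacle arises: the real algebraic geometry (the Archimedean Positivstellensatz for semirings for polyhedra, via Handelman/Corollary~\ref{prestelcor}) is doing all the heavy lifting in (ii)$\Leftrightarrow$(iii), while the rest is linear-algebraic bookkeeping. The only mildly delicate point is to be careful with signs and multi-index binomial coefficients so that the expansion of $(1-x_j)^{n_j}$ matches the expansion of $(I-E_j)^{n_j}$ on the nose.
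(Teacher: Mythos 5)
Your proposal is correct and follows essentially the same route as the paper: (i)$\Leftrightarrow$(ii) by definition, (ii)$\Leftrightarrow$(iii) via Corollary~\ref{prestelcor}(ii) applied to the generators $x_j$ and $1-x_j$, and (iii)$\Leftrightarrow$(iv)$\Leftrightarrow$(v) by binomial expansion of $(1-x_j)^{n_j}$ and $(I-E_j)^{n_j}$ (the paper cites the analogous one-variable computation in Theorem~3.15 rather than spelling it out). Your observation that ``$[-1,1]^d$'' in item~(ii) is a typo for ``$[0,1]^d$'' is also correct: statement~(i) concerns $[0,1]^d$, the example preceding the proposition sets $\cK({\sf f})=[0,1]^d$, and the asserted (i)$\Leftrightarrow$(ii) is a tautology only for $[0,1]^d$.
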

    \begin{proof}
        (i)$\leftrightarrow$(ii) holds by definition. Corollary \ref{prestelcor}(ii) yields (ii)$\leftrightarrow$(iii). 
        Let $\gn, \gm\in \N_0^d$. 
        We repeat  the computation from the proof of Theorem 3.15 and derive
        \begin{align*}
            L_s( x_1^{m_1}(1-x_1)^{n_1}&\cdots x_d^{m_d}(1-x_d)^{n_d})= ((I-E_1)^{n_1}\dots (I-E_d)^{n_d}s)_\gm \\ &=\sum_{\gj\in \N_0^d, \gj\leq \gn}~ (-1)^{|\gj|}\binom{n_1}{ j_1}\cdots \binom{n_d}{ j_d}  s_{\gm+\gj}.
        \end{align*}
        This identity implies the equivalence of conditions (iii)--(v).
    \end{proof}\hfill$\circ$ 

\section{Exercises}
\begin{itemize}
    \item[1.]\, Suppose that $Q$ is a quadratic module of a commutative real algebra ${\sA}$. Show that $Q\cap(-Q)$ is an ideal of ${\sA}$.
    This  ideal is called the {\it support ideal} of $Q$.
    \item[2.]\, Let $K$ be a closed subset of $\R^d$.  Show that ${\Pos}(K)$ is  saturated. 
    \item[3.]\, 
    Formulate  solvability criteria in terms of localized functionals and in terms of $d$-sequences for the following sets.
    \begin{itemize}
        \item[a.]\, Unit ball of $\R^d$.
        \item[b.]\, $\{x\in \R^d: x_1^2+\dots+x_d^2\leq r^2,~ x_1\geq 0,\dotsc, x_d\geq 0\}$.
        \item[c.]\, $\{(x_1,x_2,x_3,x_4)\in \R^4: x_1^2+x_2^2\leq 1,x_3^2+x_4^2\leq 1\}$.
        \item[d.]\, $\{(x_1,x_2,x_3)\in \R^3: x_1^2+x_2^2+x_3^2\leq 1, x_1+x_2+x_3\leq 1\}.$
        \item[e.]\, $ \{x\in \R^{2d}:x_1^2+x_2^2=1,\dotsc, x_{2d-1}^2+x_{2d}^2=1\}$.
    \end{itemize}
    \item[4.]\, Decide whether or not the following quadratic modules $Q({\sf f})$ are Archimedean.
    \begin{itemize}
        \item[a.]\, $f_1=x_1,f_2=x_2, f_3=1-x_1x_2, f_4=4-x_1x_2$.
        \item[b.]\, $f_1=x_1,f_2=x_2, f_3=1-x_1-x_2.$
        \item[c.]\, $f_1=x_1,f_2=x_2, f_3=1-x_1x_2$.
    \end{itemize}
    \item[5.]\, Let $f_1,\dotsc,f_k,g_1,\dotsc,g_l\in \R_d[\ux]$. Set   ${\sf {g}}=(f_1,\dotsc,f_k,g_1,\dotsc,g_l)$,  ${\sf {f}}=(f_1,\dotsc,f_k)$.
    Suppose that $Q({\sf f})$ is Archimedean. Show that each $Q({\sf g})$-positive linear functional $L$ is a determinate $\cK({\sf g})$-moment functional. 
    \item[6.]\, Formulate solvability criteria for the moment problem of the following semi-algebraic sets $\cK({\sf f})$.
    \begin{itemize}
        \item[a.]\, $f_1=x_1^2+\dots+x_d^2, f_2=x_1,\dotsc, f_k=x_{k-1}$, where $2\leq k\leq d+1$.
        \item[b.]\, $f_1=x_1, f_2=2-x_1,f_3=x_2, f_4=2-x_2, f_5=x_1^2-x_2,$ where $d=2$.
        \item[c.]\, $f_1=x_1^2+x_2^2, f_2=ax_1+bx_2, f_3=x_2,$ where $d=2, a,b\in \R$.
    \end{itemize}
    \item[7.] \, Let $d=2$, $f_1=1-x_1, f_2=1+x_1, f_3=1-x_2,f_4=1+x_2, f_5=1-x_1^2-x_2^2$~ and ${\sf f}=(f_1,f_2,f_3,f_4,f_5).$ Describe the set $\cK(\, {\sf f}\, )$ and use Theorem \ref{prestel}(ii) to characterize  $\cK(\, {\sf f}\, )$-moment functionals.
    \item[8.]\, Find a $d$-dimensional version of Exercise 7, where $d\geq 3.$
    \item[9.]\, ({\it Tensor product of preorderings})\\ Let $n,k\in \N$. Suppose that ${\sf f}_1$ and  ${\sf f}_2$ are finite subsets  of $\R_n[\ux]\equiv \R[x_1,\dotsc,x_n]$ and $\R_k[\ux']\equiv \R[x_{n+1},\dotsc,x_{n+k}]$, respectively, such that the semi-algebraic sets $\cK({\sf f}_1)$ of $\R^n$ and $\cK({\sf f}_2)$ of $\R^k$ are compact. Define a subset $T$ of $\R[x_1,\dotsc,x_{n+k}]$ by
    \begin{align*}
        T:= \Big\{ p(x,x')=\sum_{j=1}^r p_j(x)q_j(x'): ~p_1,\dotsc,p_r\in T({\sf f}_1),\,  q_1,\dotsc,q_r\in T({\sf f}_2),\, r\in \N\Big\}.
    \end{align*}
    \begin{itemize}
        \item[a.] Show that $T$ is an Archimedean semiring of $\R[x_1,\dotsc,x_{n+k}]$.
        \item[b.] Give an example of ${\sf f}_1$ and  ${\sf f}_2$  for which $T$ is not a preordering.
        \item[c.] Let $p\in \R[x_1,\dotsc,x_{n+k}]$. Suppose  $p(x,x')>0$ for all $x\in \cK({\sf f}_1)$, $x'\in\cK({\sf f}_2)$. Prove that  $p\in T$.
    \end{itemize} 
    
    Hint: The preorderings $ T({\sf f}_1)$ and $T({\sf f}_2)$ are 
    Archimedean (Proposition \ref{prearchcom}). Hence $f\otimes 1$ and $1\otimes g$  satisfy the Archimedean condition for $f\in T({\sf f}_1)$ and $g\in T({\sf f}_2)$. The semiring $T$ is generated by these elements, so $T$ is Archimedean. For b.) try $p=(x_1-x_{n+1})^2$. For c.), apply the Archimedean Positivstellensatz.
    \item[10.]~ ({\it Supporting polynomials of compact convex sets of $\R^d$})\\
    Let   $K$ be a non-empty compact convex subset of $\R^d$. By a {\it supporting polynomial} of $K$ at some point $t_0\in K$ we mean a  polynomial  $h\in \R_d[\ux]$ of degree one such that $h(t_0)=0$ and $h(t)\geq 0$ for all $t\in K$. (In this case, $t_0$ a is a boundary point of $K$.) Suppose that $H$ is a set of supporting polynomials at points of $K$ such that
    $$
    K=\{ t\in \R^d: h(t)\geq 0~~\textup{ for all}~~  h\in H \}.$$                                                                                         
    \begin{itemize}
        \item[a.] Prove that the semiring $S(H)$ of $\R_d[\ux]$ generated by $H$ is Archimedean.
        \item[b.] Let $f\in \R_d[\ux]$ be such that $f(t)>0$ for all $t\in K$.  Prove that $f\in S(H).$
    \end{itemize}
    \item[11.]~Elaborate Exercise 10. for the unit disc $K=\{(x,y)\in \R^2: x^2+y^2\leq 1\}$ and $H:=\{ h_\theta:=1+x\, \cos(\theta)+y\, \sin\theta: \theta\in [0,2\pi)\}$ or for appropriate subsets of $K$.
    \item[12.]~({\it Reznick's theorem} [Re2])\index{Theorem! Reznick}\\
    Let $f\in \R_d[\ux]$ be a homogeneous polynomial such that $f(x)> 0$ for $x\in \R^d$, $x\neq 0$. Prove that there exists an $n\in \N$ such that $(x_1^2+\dots+x_d^2)^nf(x)\in \sum \R_d[\ux]^2$.\smallskip
    
    Hint: Mimic the proof of Proposition \ref{proofPolya}:~ Let   $T$ denote the preordering $\sum \R_d[\ux]+\cI$, where  $\cI$ is the ideal generated by the polynomial\, $1-(x_1^2+\dots+ x_d^2)$. Show that  $T$-positive characters corresponds to  points of the unit sphere, substitute $x_j(\sum_i x_i^2)^{-1}$ for $x_j$, apply Theorem \ref{prestel}(i) to $T$, and clear denominators. 
\end{itemize}

\section{Notes}
The interplay between real algebraic geometry  and the moment problem for compact semi-algebraic sets and the corresponding Theorems  \ref{schmps} and \ref{mpschm} were discovered by the author in [Sm6]. 
A small gap in the proof of [Sm6, Corollary 3] (observed by A. Prestel) 
was immediately repaired by  the reasoning of the above proof of Proposition \ref{prearchcom} (taken from [Sm8, Proposition 18]).

The fact that the preordering  is Archimedean in the compact case was first noted  by 
T. W\"ormann [W\"o]. An algorithmic proof of Theorem \ref{schmps} was developed by M. Schweighofer [Sw1], [Sw2]. 

The operator-theoretic proof of Theorem \ref{archmedps}(ii) given above 
is long known among operator theorists; it was  used in [Sm6]. The operator-theoretic approach to the multidimensional moment theory was investigated in detail by F. Vasilescu [Vs1], [Vs2].

The Archimedean Positivstellensatz  (Theorem \ref{archpos}) has a long history. It was proved in various versions  by M.H. Stone [Stn], R.V. Kadison [Kd], J.-L. Krivine [Kv1], E. Becker and N. Schwartz [BS], M. Putinar [Pu2], and T. Jacobi [Jc]. The general version for quadratic modules is due to Jacobi [Jc], while the version  for semirings was proved much earlier by  Krivine [Kr1]. A more general version and a detailed discussion can be found in [Ms1, Section 5.4]. The unified approach to Theorem \ref{archpos} in  Section \ref{reparchimodiules} using the dagger cones is based on results obtained in the paper [SmS23]. Theorem \ref{auxsemiring} and Example \ref{module} are also taken from [SmS23].

M. Putinar  [Pu2] has proved that a finitely generated quadratic module $Q$ in $\R_d[\ux]$ is Archimedean if (and only if) there exists a polynomial $f\in Q$ such that the set $\{x\in \R^d: f(x)\geq 0\}$ is compact.

Corollary  \ref{mpwithboundeddensity} and its non-compact version  in Exercise 14.11 below are  from [Ls3]. The moment problem with bounded densities is usually called the {\it Markov moment problem} or $L$-moment problem. In dimension one it goes back to A.A. Markov  [Mv1], [Mv2], see [AK], [Kr2]. An interesting more recent work is [DF]. The multidimensional case  was studied in  [Pu1], [Pu3], [Pu5], [Ls3], [Ls4].

For  compact polyhedra with nonempty interiors 
Corollary \ref{prestelcor}(i) was proved by D. Handelman [Hn]. A  special case was treated  earlier by J.-L.  Krivine [Kv2]. A related version can be found in [Cs, Theorem 4]. The  general Theorem \ref{prestel} is taken from [SmS23]; it   is a slight generalization of [PD, Theorem 5.4.6]. 

Polya's theorem was proved in [P]. Polya's original proof is  elementary; the elegant proof given in the text is from [W\"o].   Proposition \ref{multihausdorff} is a classical result obtained in [HS]. It should be noted that Reznick's theorem [Re2] can be derived as an immediate consequence of Theorem \ref{schmps}, see [Sr3, 2.1.8].

Reconstructing the shape of subsets of $\R^d$ from its moments with respect to the Lebesgue measure is another interesting topic, see e.g. [GHPP] and [GLPR].

\end{document}